\setlist[itemize]{noitemsep,nolistsep}
\setlist[enumerate]{noitemsep,nolistsep}
\definecolor{tabblue}{rgb}{0.12156862745098039, 0.4666666666666667, 0.7058823529411765}
\definecolor{taborange}{rgb}{1.0, 0.4980392156862745, 0.054901960784313725}
\definecolor{tabgreen}{rgb}{0.17254901960784313, 0.6274509803921569, 0.17254901960784313}
\definecolor{tabred}{rgb}{0.8392156862745098, 0.15294117647058825, 0.1568627450980392}
\definecolor{tabpurple}{rgb}{0.5803921568627451, 0.403921568627451, 0.7411764705882353}
\colorlet{citecolor}{tabgreen}
\colorlet{linkcolor}{tabblue}
\colorlet{urlcolor}{taborange!75!black}
\crefname{equation}{}{}
\Crefname{equation}{Eq.}{}
\providecommand\@dotsep{5}
\renewcommand{\listoftodos}[1][\@todonotes@todolistname]{%
  \@starttoc{tdo}{#1}}
\renewcommand{\tocsection}[3]{%
  \indentlabel{\@ifnotempty{#2}{\bfseries\ignorespaces#1 #2\quad}}\bfseries#3}
\renewcommand{\tocsubsection}[3]{%
  \indentlabel{\@ifnotempty{#2}{\ignorespaces#1 #2\quad}}#3}
\def\l@subsection{\@tocline{2}{0pt}{2.5pc}{5pc}{}}
\newcounter{counter}
\numberwithin{counter}{section}
\newtheorem{theorem}[counter]{Theorem}
\newtheorem{lemma}[counter]{Lemma}
\newtheorem{proposition}[counter]{Proposition}
\newtheorem{corollary}[counter]{Corollary}
\theoremstyle{definition}
\newtheorem{remark}[counter]{Remark}
\newtheorem{example}[counter]{Example}
\newtheorem{definition}[counter]{Definition}
\numberwithin{equation}{section}
\numberwithin{equation}{section}
\title[RIESZ KERNEL WASSERSTEIN GRADIENT FLOWS]{
    \Large{On the global convergence of Wasserstein gradient flow of the Coulomb discrepancy.}}
\author{Siwan Boufadene$^\dagger$}
\author{François-Xavier Vialard$^\dagger$}
\begin{document}

\maketitle

\vspace{-8mm}
\begin{abstract}
    In this work, we study the Wasserstein gradient flow of the Riesz energy towards a determined target measure on the space of probability measures. 
    The Riesz energy is a quadratic functional on this space, defined using Riesz kernels, and it is in general not geodesically convex in the Wasserstein geometry. Consequently, standard arguments cannot be applied to deduce the global convergence of the Wasserstein gradient flow. 
    Our main result is the exponential convergence of the flow to the minimizer on a closed Riemannian manifold under the condition that the logarithms of the source and target measures are bounded and H\"older continuous. 
    To show this, we first prove that a Polyak-Lojasiewicz inequality is satisfied for sufficiently regular solutions. The key regularity result is the global-in-time existence of H\"older solutions if the initial and target data are  H\"older continuous, proven either in Euclidean spaces or in closed Riemannian manifolds. 
    We then define Lagrangian critical points and prove that such points, for the Coulomb or Energy distance discrepancies, are equal to the target everywhere except on singular sets with empty interiors.
    For arbitrary measures, we use flow interchange techniques to prove there are no local minima other than the global one for the Coulomb kernel. 
    Additionally, sufficiently singular measures cannot be critical points, ensuring they are not fixed points of the discrete JKO updates. 
\end{abstract}

\vspace{1mm}
\noindent\textbf{Keywords.} Calculus of variations $\cdot$ Optimal transportation $\cdot$ Coulomb kernel $\cdot$ Flow interchange

\vspace{2mm}
\noindent\textbf{Mathematics Subject Classification.} 49Q22, 49Q10, 90C26. 

\vspace*{\fill}

Email addresses: \href{mailto:siwan.boufadene@univ-eiffel.fr}{siwan.boufadene@univ-eiffel.fr} and \href{mailto:francois-xavier.vialard@u-pem.fr}{francois-xavier.vialard@univ-eiffel.fr}

\hypersetup{linkcolor=black}
\newpage
\setcounter{tocdepth}{3}
{
    \hypersetup{linkcolor=black}
    \tableofcontents
}

\newpage

\allowdisplaybreaks
\section{Introduction}
\label{sec:introduction}
In this paper, we are interested in the Wasserstein gradient flow of some quadratic functionals defined on the set of probability measures, both on the Euclidean space and on a Riemannian manifold. These quadratic functionals are convex for the standard vertical convex structure of probability measures. However they are not geodesically convex for the Wasserstein geometry, potentially impacting the global convergence properties typically obtained in geodesically convex scenarios. The motivation for studying these gradient flows under the Wasserstein geometry comes from machine learning, more precisely from the mean-field limit of shallow neural networks \cite{chizat2018global,Mei_2018}. This line of research investigates the optimization landscape of the usual empirical risk of a single-hidden layer neural network under gradient flow. A powerful relaxation of the problem, already proposed in \cite{BarronUniversal1993}, consists of embedding the space of parameters into the space of probability measures. In this context, the corresponding objective functional takes on a quadratic form and the particle gradient flow corresponds to a Wasserstein gradient flow. 
The Coulomb MMD energy is a particular case of quadratic functionals defined by reproducing kernels. These functionals have also raised interest in machine learning and statistics since they yield a discrepancy between probability measures and, in fact, a squared distance. These Maximum Mean Discrepancies (MMD) possess two noteworthy properties: (i) a quadratic computational complexity (or even $O(n\log(n))$ for the Energy distance kernel \cite{hertrich2023generative}), outperforming alternatives like optimal transport, and (ii) a parametric estimation rate from empirical measures, a feature missing from standard optimal transport methods.

Let $G$ be a (conditionally positive) kernel, such as the Gaussian kernel, on the Euclidean space $\mathbb{R}^d$. The MMD between $\mu$ and $\nu$ two probability measures is the energy
\begin{equation*}
    E_\nu(\mu) = \iint (\mu(x) - \nu(x) )G(x,y) (\mu(y) - \nu(y))\,.
\end{equation*}

Such a functional is nonnegative and strictly convex on the space of probability measures if the kernel $G$ is conditionally positive. 
In our work, we are interested in a fixed target measure $\nu$ and a time-dependent $\mu_t$, optimized through the action of velocity fields. More precisely, we are interested in the Wasserstein gradient flow of the functional $E_\nu$ with respect to $\mu$.
Our primary concern is the question of global convergence towards the unique minimizer, which is $\nu$.
For example, when the kernel is smooth enough, empirical measures remain preserved by the Wasserstein gradient flow. Consequently, the Wasserstein gradient flow of this energy with a finite empirical measure as the source and a density as the target cannot give convergence. 
This fact motivates exploring non-smooth kernels, such as the energy distance $(x,y) \mapsto -\| x - y\|$ on the Euclidean space, which is not $C^1$. Consequently, one can hope for global convergence of the Wasserstein gradient flow even when the source measure and the target measure are mutually singular. Indeed, in dimension one, the corresponding functional is geodesically convex in the Wasserstein geometry, implying global convergence of the solution $\mu_t$ towards $\nu$. This specific kernel has been studied in the context of Wasserstein gradient flows in \cite{Hertrich_2024} and \cite{steidl2023} for applications in machine learning and imaging. The authors explicitly leave as open the question of global convergence.

The question we address in this paper is the extension of this one-dimensional result to higher dimensions. There are at least two different directions for generalizing this result. Firstly, the Energy distance kernel $-\| x - y\|$  remains conditionally positive definite on $\mathbb R^d$ for $d\geq 1$. Secondly, in dimension one, $-| x - y|$ is the Coulomb kernel, proportional to the inverse of the Laplacian operator. The inverse of the Laplacian can also be defined in higher dimensions, for instance on $\mathbb R^3$, it is given by $\frac{1}{\| x - y\|}$.
Both kernels belong to the family of Riesz kernels. One motivation for using these kernels also stems from numerical experiments, where the energy distance notably stands out.
Indeed, the energy distance kernel is easy to implement. An efficient and fast method for calculating the discrepancy associated with it, as described in \cite{hertrich2023generative}, allows for large-scale applications. Furthermore,  it exhibits favorable behavior compared to other kernels such as the Gaussian kernel. More precisely, global convergence is observed. Conversely, the Coulomb kernel is more intricate to implement due to its blow-up along the diagonal. This makes drawing conclusions from numerical experiments more delicate.
Yet, from a theoretical point of view, the Coulomb kernel has been studied intensively in the mathematical literature, in particular due to its physical significance \cite{Serfaty2016LargeSW,serfaty2015coulomb}. Recent results presented in \cite{Jabin_2018} and \cite{decourcel2023sharp} study large stochastic systems of interacting particles under Coulomb interaction in the Euclidean space, proving propagation of chaos and convergence to the limit continuity equation using relative entropy methods. In \cite{decourcel2023sharp} this work is done on the torus manifold.

For the Coulomb kernel, given smoothness assumptions on $\mu$ a time-dependent density and $\nu$ a fixed density, the Wasserstein gradient flow associated with $E_\nu$ reads, on $\mathbb R^d$ or on a Riemannian manifold:
\begin{equation}\label{heatflow}
\partial_t \mu = -\nabla \cdot {\big( \mu \nabla \varphi\big)}, \qquad \Delta \varphi = \mu - \nu\,.
\end{equation}
The potential $\varphi$ is a solution to the Poisson equation with source term $\mu - \nu$.
The potential $\varphi$ can be expressed, up to a positive constant, as $\varphi = - G \star (\mu - \nu)$, giving an example of non-linear non-local interactions. Non-local interaction energy systems associated with a radial kernel $W(x) = w(\|x\|)$ are solutions to the equation:
\begin{equation*}
\frac{\partial \mu}{\partial t} = -\nabla \cdot {\big( \mu (\nabla W \star \mu)\big)}\,.
\end{equation*}
Confinement results for these dynamics have been established, depending on the choice of $W$. In many instances, $W$ is assumed to be $\lambda$-convex, preserving particles as discussed in \cite{carrillo-figalli-laurent}, allowing for the use of mean-field techniques. Attractive potentials---e.g. $w'(r) \geq 0$ everywhere---are the simplest ones,  where in some cases, the total mass aggregates at the potential's mass center. Some other potentials, including swarming systems models\cite{Carrillo_2014}, Morse potentials or characteristic function of sets \cite{Balague,CARRILLO2012550,carrillo-figalli-laurent} are said to be attractive-repulsive. Various hypotheses are taken to establish confinement \cite{carrillo-figalli-laurent,Balague}. Some potentials with a singularity at $0$ have been treated, for example in \cite{Balague} with $W(x) = G(x) + W_a(x)$ where $G$ is the Coulomb kernel $G(x) = \|x\|^{-d+2}$ and $W_a$ is an attractive potential that satisfies $\underset{r \rightarrow \infty}{\lim} \, W_a'(r) r^{1/d} = +\infty$. 
However, these results do not apply to our case of study, since our functional is not  $\lambda$-convex and has diffusive properties. Additionally, the confining part, which depends on a target measure $\nu$, is weaker compared to previously cited papers and, as of now, we were not able to prove mass confinement.

\vspace{3mm}
\noindent
\textbf{Main contributions. }
This paper presents two primary results focusing on the Coulomb kernel. 

The first main result is the global convergence of the solution of the continuity equation \ref{EqHold} towards the target $\nu$, on a closed Riemannian manifold within a smooth initial setting.
At the beginning of Section \ref{SectionPL}, assuming the solution is sufficiently smooth and exists at all times, a simple calculation shows that the Polyak-Lojasiewicz inequality holds at all times on a closed Riemannian manifold. Notably, this observation reduces the question of (exponential) global convergence to a regularity inquiry: whether the solutions exist for all time in adequately smooth functional spaces.
Therefore, we first study the gradient flow of the Coulomb discrepancy in a smooth setting, i.e. the corresponding continuity PDE under certain regularity assumptions. Assuming Hölder continuity for initial and target measures, we establish that solutions exist at all times, with propagation of H\"older regularity. This proves global convergence with an exponential rate of convergence in the case of closed Riemannian manifolds. 

The second main result is Theorem \ref{NoLocMin}, which relates to the landscape of the Coulomb discrepancy on both the Euclidean space and closed Riemannian manifolds. It states that, in the Wasserstein geometry, the energy functional has no local minima apart from the global one. More precisely, we prove that if the current measure differs from the target, there always exists a measure curve (starting at the current one) that is $1/2$-H\"older in Wasserstein along which the energy is strictly decreasing. 
This is done through the use of flow interchange techniques,  specifically leveraging properties of the Boltzmann entropy along the flow.

Among other results, we prove in Section \ref{SecCriticalPoints} that any Lagrangian critical point for the Coulomb and Energy distance kernels is equal to the target measure everywhere except on singular sets with empty interiors. In a similar direction, we prove that if the difference between the current measure and the target has a Minkowski dimension smaller than the ambient one, then it cannot be a critical point of the Wasserstein gradient flow.

\vspace{3mm}
\noindent
\textbf{Perspectives. }
Left open by our work is the question of global convergence of the flow in a closed Riemannian manifold for all source and target measures. Although the result seems highly plausible on a closed Riemannian manifold, extending it to a non-compact setting such as the Euclidean space would require addressing the confinement issue or changing completely the proof strategy. Indeed, there is a competition between the repulsive behavior of the Coulomb kernel and the attraction of the target measure. The repulsive part can lead to mass spreading to infinity in the Euclidean space, which makes the analysis more difficult in our opinion. 
Note also that in a finite-dimensional setting, knowing that there is no local minima but the global one guarantees the global convergence for almost every initial condition under some assumptions on the objective functional. Obtaining similar results in our infinite-dimensional case would be of interest.

\vspace{3mm}
\noindent
\textbf{Notations.}
\begin{itemize}
    \item If $A$ is a subset of $\mathbb{R}^d$, $A^c \coloneqq \mathbb{R}^d \setminus A$ denotes its complement.    
    \item $\mu^{\otimes 2}$ is the product measure on $\mathbb{R}^d \times \mathbb{R}^d$: for any Borel sets $A,B$, $\mu^{\otimes 2}(A \times B) \coloneqq \mu(A)\mu(B)$.
    \item $C^\infty_c$ is the space of test functions, i.e. infinitely differentiable functions with compact support.
    \item Convolution is denoted by $\star$.
    \item If $(M,g)$ is a Riemannian manifold, $d_M(x,y)$ denotes the geodesic distance between $x,y \in M$.
\end{itemize}
\section{Polyak-Lojasiewicz Inequality and Exponential Convergence}\label{SectionPL}

Our goal is to prove that a Wasserstein gradient flow curve $\mu_t$ of the energy $E_\nu$ converges to the target $\nu$.
In finite dimensions, a standard condition for convergence is the Polyak-Lojasiewicz inequality.

\begin{definition}[Polyak-Lojasiewicz condition]
    Let $f : \mathbb{R}^d \rightarrow \mathbb{R}$ be a differentiable function. It is said to satisfy the Polyak-Lojasiewicz condition (PL condition) with parameter $\lambda > 0$ if the following inequality holds for all $x \in \mathbb{R}^d$:
    \begin{equation*}
        \frac{1}{2}|\nabla f(x)|^2 \geq \lambda (f(x)-f^*)\,.
    \end{equation*}
\end{definition}

This condition is weaker than many other classical conditions, including strong convexity, weak strong convexity, or the restricted secant inequality, see \cite{PlCondKarimi} for a review. With this condition, an exponential convergence rate to the global minimum can be proven. Moreover, we can use a weaker dynamical version assumption. When examining the convergence of a gradient flow curve $x_t$ in the Euclidean space, defined by $\dot{x} = -\nabla f(x)$, it is sufficient for this inequality to hold along the curve. In the subsequent discussion, we focus on localized PL inequalities along specific curves, as opposed to global ones.

For gradient flows on measures, this type of inequality is also known under the name of \emph{entropy-entropy production} inequality in the context of gradient flow \cite{kondratyev2016new}. In general, such inequalities are functional inequalities. For instance, the log-Sobolev inequality is a PL inequality for the entropy under the Wasserstein geometry. Similarly, in \cite{kondratyev2016new}, the authors use such a generalized Beckner inequality to obtain PL.
To define an analog inequality in Wasserstein spaces we need the following chain rule \cite[Proposition 10.3.18]{ambrosio2005gradient}.

\begin{proposition}\label{ChainRule}
    Let $\mathcal{F}$ be a proper lower semicontinuous functional and $\mu_t$ be an absolutely continuous curve with tangent velocity $v_t$. We suppose $\mathcal{F} \circ \mu$ is approximately differentiable in time almost everywhere and that for all $t$ the set $\partial \mathcal{F}(\mu_t)$ of vector field subdifferentials $\xi \in L_2(\mu_t)$ (see Definition \ref{VectSubDiff}) is non-empty. Then for any $\xi_t \in \partial \mathcal{F}(\mu_t)$ we have:
    \begin{equation*}
        \frac{d}{dt} \mathcal{F}(\mu_t) = \int v_t \cdot \xi_t d\mu_t \,.
    \end{equation*}
\end{proposition}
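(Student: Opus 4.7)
The plan is to extract the chain rule from the two one-sided subdifferential inequalities obtained by plugging $\rho = \mu_{t+h}$ into the definition of $\partial\mathcal{F}(\mu_t)$, and then to pass to the limit using the fact that optimal plans between $\mu_t$ and $\mu_{t+h}$ encode the tangent velocity $v_t$ in the limit $h\to 0$.

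More precisely, the first step is the following. Fix $t$ at which both the approximate derivative of $\mathcal{F}\circ\mu$ exists and $v_t$ is the tangent velocity to the curve. By \Cref{VecSubDiff} applied at $\mu_t$ with test measure $\rho = \mu_{t+h}$, for every optimal plan $\gamma_h\in\Gamma_0(\mu_t,\mu_{t+h})$,
\begin{equation*}
\mathcal{F}(\mu_{t+h}) - \mathcal{F}(\mu_t) \geq \int \xi_t(x)\cdot(y-x)\,d\gamma_h(x,y) + o\bigl(W_2^2(\mu_t,\mu_{t+h})\bigr).
\end{equation*}
Since $\mu_t$ is absolutely continuous, the metric derivative $|\dot\mu|(t)=\|v_t\|_{L^2(\mu_t)}$ is finite, so $W_2(\mu_t,\mu_{t+h})=O(h)$, hence $W_2^2/h\to 0$. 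The remaining task is to show that the rescaled plan $h^{-1}(\pi_2-\pi_1)_{\#}\gamma_h$ ``converges'' in a sense strong enough to pair with $\xi_t\in L^2(\mu_t)$, namely
\begin{equation*}
\lim_{h\to 0}\frac{1}{h}\int \xi_t(x)\cdot(y-x)\,d\gamma_h(x,y) = \int \xi_t\cdot v_t\,d\mu_t.
\end{equation*}
This is the standard identification of the tangent space at $\mu_t$ with the $L^2(\mu_t)$-closure of gradients of smooth functions, obtained from the continuity equation $\partial_t\mu_t+\nabla\cdot(\mu_t v_t)=0$: rescaled displacements along optimal couplings converge weakly to $v_t$ when tested against any element of the tangent cone, and $\xi_t$ belongs there since the subdifferential is contained in this tangent cone.

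Given that limit, dividing by $h>0$ and letting $h\to 0^+$ yields
\begin{equation*}
\liminf_{h\to 0^+}\frac{\mathcal{F}(\mu_{t+h})-\mathcal{F}(\mu_t)}{h}\;\geq\;\int \xi_t\cdot v_t\,d\mu_t.
\end{equation*}
Writing the same inequality with $\rho=\mu_{t-k}$, $k>0$, and using that the tangent velocity of $s\mapsto\mu_{t-s}$ at $s=0$ is $-v_t$, the symmetric computation produces
\begin{equation*}
\limsup_{h\to 0^-}\frac{\mathcal{F}(\mu_{t+h})-\mathcal{F}(\mu_t)}{h}\;\leq\;\int \xi_t\cdot v_t\,d\mu_t.
\end{equation*}
Since by hypothesis $\mathcal{F}\circ\mu$ is approximately differentiable at $t$, the approximate limit exists and the liminf from the right and the limsup from the left must agree with it; they are both sandwiched against $\int\xi_t\cdot v_t\,d\mu_t$, so equality holds.

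The delicate step is the tangent-velocity limit identity above, since the plans $\gamma_h$ need not be unique and $\xi_t$ is only $L^2(\mu_t)$. I would handle it by first proving the identity for $\xi_t$ equal to a smooth compactly supported gradient $\nabla\phi$ (where the convergence follows from a first-order expansion and uniform bounds on the second moments of $\gamma_h$), and then extending by density, using that $\partial\mathcal{F}(\mu_t)$ lies in the tangent cone which is precisely the $L^2(\mu_t)$-closure of $\{\nabla\phi:\phi\in C^\infty_c\}$. The approximate-differentiability hypothesis is what allows the one-sided bounds to be combined into a genuine derivative, sidestepping the issue that $\mathcal{F}\circ\mu$ need not be differentiable everywhere.
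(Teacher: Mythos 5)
The paper does not actually prove this chain rule; it is cited directly from \cite[Prop.\ 10.3.18]{ambrosio2005gradient}, and your argument reproduces the standard strategy of that reference: one-sided subdifferential bounds at $\rho=\mu_{t\pm h}$ combined with the identification of rescaled optimal displacements with the tangent velocity (\cite[Prop.\ 8.4.6]{ambrosio2005gradient}), then squeezed against the approximate derivative. One quantifier slip to flag: Definition~\ref{VecSubDiff} gives the lower bound with the \emph{infimum} over optimal plans, so the inequality does not a priori hold ``for every optimal plan $\gamma_h$'' as you write; you only get
\begin{equation*}
\mathcal{F}(\mu_{t+h})-\mathcal{F}(\mu_t)\ \geq\ \inf_{\gamma_0\in\Gamma_0(\mu_t,\mu_{t+h})}\int \xi_t(x)\cdot(y-x)\,d\gamma_0(x,y)+o\bigl(W_2^2(\mu_t,\mu_{t+h})\bigr).
\end{equation*}
This is harmless, though: the rescaled plans $\bigl(\pi_1,(\pi_2-\pi_1)/h\bigr)_{\#}\gamma_h$ converge in $\mathcal{P}_2$ to $(\operatorname{id},v_t)_{\#}\mu_t$ for \emph{any} family $(\gamma_h)$ of optimal plans, so after dividing by $h$ the infimum over $\Gamma_0(\mu_t,\mu_{t+h})$ also converges to $\int \xi_t\cdot v_t\,d\mu_t$, and the one-sided bounds follow. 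Your last paragraph is the correct way to finish the pairing for general $\xi_t\in L^2(\mu_t)$: approximate $\xi_t$ in $L^2(\mu_t)$ and use Cauchy--Schwarz together with the uniform second-moment bound $\sup_h h^{-2}\int|y-x|^2\,d\gamma_h=\sup_h h^{-2}W_2^2(\mu_t,\mu_{t+h})<\infty$, which follows from finiteness of the metric derivative; one does not even need $\xi_t$ to be a gradient for this density step.
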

For functionals that are regular enough, we have $v_t = -\nabla \frac{\delta \mathcal{F}}{\delta \mu}(\mu_t) $ and $\nabla \frac{\delta \mathcal{F}}{\delta \mu}(\mu_t) \in \partial \mathcal{F}(\mu_t)$ so that:
\begin{equation}\label{VarFCurve}
    \frac{d}{dt} \mathcal{F}(\mu_t) = -\left\|\nabla \frac{\delta \mathcal{F}}{\delta \mu}(\mu_t)\right\|_{L_2(\mu_t)}^2 \,.
\end{equation}
This motivates our definition of the Polyak-Lojasiewicz in Wasserstein spaces.
\begin{definition}
    Let $\mathcal{F}$ be a  functional as in Proposition \ref{ChainRule}. Suppose that its global minimum is equal to 0. Let $\mu_t$ be an absolutely continuous curve. The functional $\mathcal{F}$ is said to satisfy a Polyak-Lojasiewicz inequality with parameter $\lambda > 0$ along the curve $\mu_t$ if for all $t > 0$:
    \begin{equation}\label{def :PL ineq}
        \left\|\nabla \frac{\delta \mathcal{F}}{\delta \mu}(\mu_t)\right\|_{L_2(\mu_t)}^2 \geq \lambda \mathcal{F}(\mu_t)\,.
    \end{equation}
\end{definition}
If this inequality holds, we get, for regular enough functionals, thanks to equation \eqref{VarFCurve} and Gronwall's lemma:
\begin{equation*}
    \mathcal{F}(\mu_t) \leq \mathcal{F}(\mu_0) e^{-\lambda t} \,,
\end{equation*}
proving global exponential convergence.

\subsection{Regularity and Polyak-Lojasiewicz Inequality for the Energy Functional}

In this section, we prove a \emph{partial local} Polyak-Lojasiewicz inequality, in the sense that the constant $\lambda$ in \ref{def :PL ineq} does depend on $\mu$. It is notably weaker than a global PL inequality. The issue is that this inequality is only \emph{partial and local}, in the sense that it depends on a positive lower bound on $\mu$ that may not exist. In finite dimensions, obtaining a local Polyak-Lojasiewicz inequality (with a $\lambda$ that may depend on a given neighborhood) is still informative on the landscape of the energy functional. Specifically, it implies that there are no critical points other than global minima. However, in our case,  since this inequality doesn't hold for every $\mu$, a similar conclusion cannot be directly derived.

When the subdifferential of $E_\nu$ is non-empty, we can characterize it, using the same ideas as in \cite[Prop 4.3.1]{bonaschi}. The proof is essentially the same for both Energy Distance in the Euclidean space and Coulomb kernels both in the Euclidean space and on Riemannian manifolds, but crucial parts about avoiding the singularity rely on different arguments (see Appendix \ref{proof lemma 1}).
\begin{lemma}\label{MinimNormSubdiff}
    Let $\mu$ be a probability measure such that $\partial E_\nu (\mu)$ is non-empty. Then the vector field $\nabla \frac{\delta E_\nu}{\delta \mu}(\mu) = (\nabla G) \star (\mu - \nu)$ satisfies
    \begin{equation*}
        \left\|\nabla \frac{\delta E_\nu}{\delta \mu}(\mu)\right\|_{L_2(\mu)} \leq |\partial E_\nu |(\mu)\,.
    \end{equation*}
\end{lemma}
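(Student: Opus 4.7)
The plan is to bound the metric slope $|\partial E_\nu|(\mu)$ from below by constructing an explicit perturbation of $\mu$ along which $E_\nu$ decreases at the target rate. Writing $\phi \coloneqq G \star (\mu - \nu)$, so that $\nabla \phi = \nabla \frac{\delta E_\nu}{\delta \mu}(\mu)$, I set for $t>0$ small
\begin{equation}
\mu_t \coloneqq (\id - t\,\nabla \phi)\#\mu.
\end{equation}
The map $\id - t\,\nabla \phi$ is a (generally non-optimal) transport map from $\mu$ to $\mu_t$, hence the variational characterization of $W_2$ yields
\begin{equation}
W_2(\mu,\mu_t) \leq t\,\|\nabla \phi\|_{L_2(\mu)}.
\end{equation}
The hypothesis $\partial E_\nu(\mu) \neq \emptyset$ enters at this point in order to guarantee the $L_2(\mu)$-integrability of $\nabla \phi$ that makes $\mu_t$ meaningful.

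\textbf{First-order descent and conclusion.} Using the symmetry of $G$ and the splitting $\mu_t - \nu = (\mu - \nu) + (\mu_t - \mu)$, the quadratic structure of $E_\nu$ gives
\begin{equation}
E_\nu(\mu_t) - E_\nu(\mu) = \int \phi\, d(\mu_t - \mu) + \tfrac{1}{2}\,E(\mu_t - \mu).
\end{equation}
A pointwise Taylor expansion after the change of variables $y = x - t\,\nabla \phi(x)$ handles the linear term,
\begin{equation}
\int \phi\, d(\mu_t - \mu) = \int \bigl[\phi(x - t\,\nabla \phi(x)) - \phi(x)\bigr]\, d\mu(x) = -t\,\|\nabla \phi\|_{L_2(\mu)}^2 + o(t),
\end{equation}
while a direct estimate on the potential $G\star (\mu_t - \mu)$ shows $E(\mu_t - \mu) = O(t^2)$. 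Combining the two displays,
\begin{equation}
\frac{(E_\nu(\mu) - E_\nu(\mu_t))^+}{W_2(\mu,\mu_t)} \;\geq\; \frac{t\,\|\nabla \phi\|_{L_2(\mu)}^2 - o(t)}{t\,\|\nabla \phi\|_{L_2(\mu)}} \;\xrightarrow[t\to 0^+]{}\; \|\nabla \phi\|_{L_2(\mu)},
\end{equation}
and passing to the $\limsup$ in the definition of the metric slope yields the announced inequality $\|\nabla \phi\|_{L_2(\mu)} \leq |\partial E_\nu|(\mu)$.

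\textbf{Main obstacle.} The nontrivial step is to justify the Taylor expansion and the quadratic control of $E(\mu_t - \mu)$ rigorously despite the diagonal singularity of $\nabla G$: for the Coulomb kernel $|\nabla G(x,y)| \sim \|x-y\|^{1-d}$ blows up, whereas for the Energy Distance $\nabla G$ is bounded but discontinuous, and in both situations $\nabla \phi$ is a priori only defined $\mu$-almost everywhere. The argument then requires a kernel-dependent regularization, typically by truncating $G$ near the diagonal and passing to the limit using the $L_2(\mu)$-integrability provided by the assumption, or by exploiting the harmonicity of $\phi$ outside the supports of $\mu$ and $\nu$ in the Coulomb case. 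On a closed Riemannian manifold one additionally localizes in geodesic normal charts and uses the Green function of the Laplace--Beltrami operator to reduce to the Euclidean picture. These kernel- and geometry-specific technicalities are exactly what is deferred to the appendix; once they are in place, the three displayed computations combine into the stated inequality.
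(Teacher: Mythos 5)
Your approach takes a genuinely different route from the paper's, and it has a gap that the paper's argument is specifically designed to avoid. You push $\mu$ directly along the candidate descent direction $-\nabla\phi$ and try to extract $\|\nabla\phi\|_{L_2(\mu)}$ from a first-order Taylor expansion. The difficulty you flag as a deferred technicality is in fact the whole problem: for the Coulomb kernel, $\nabla\phi = (\nabla G)\star(\mu-\nu)$ is a singular-integral object defined only $\mu$-a.e., not a $C^1$ (let alone compactly supported, Lipschitz) vector field, so neither the estimate $W_2(\mu,\mu_t)\leq t\|\nabla\phi\|_{L_2(\mu)}$ (which requires $\id-t\nabla\phi$ to be a genuine Borel map), nor the pointwise expansion $\phi(x-t\nabla\phi(x))-\phi(x)=-t|\nabla\phi(x)|^2+o(t)$ (which requires differentiability of $\phi$ at $\mu$-a.e.\ $x$ plus a dominated-convergence argument), nor the bound $E(\mu_t-\mu)=O(t^2)$ (which needs $\mu_t-\mu$ to be tame enough that its self-energy is controlled) is justified at the level of generality of the lemma. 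Truncating the kernel near the diagonal and passing to the limit, which you suggest, is not a minor step; one has to control how the truncation error interacts with the metric slope, and this is nontrivial precisely because the slope itself is a sup over an enormous class of competitors.

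The paper instead works by duality. It fixes an \emph{arbitrary} smooth compactly supported vector field $v\in C_c^\infty$, computes the directional derivative
\begin{equation}
\lim_{t\to 0}\frac{E_\nu((\id+tv)\#\mu)-E_\nu(\mu)}{t}=\int (\nabla G)\star(\mu-\nu)\cdot v\,d\mu,
\end{equation}
which is unproblematic because $v$ (not $\nabla\phi$) is the perturbing field and the diagonal singularity is handled once and for all by the observations that $G(0)=0$ for the energy distance and $\mu^{\otimes 2}(\mathcal D)=0$ for finite Coulomb energy. Combining this with the definition of the metric slope and $W_2((\id+tv)\#\mu,\mu)\leq t\|v\|_{L_2(\mu)}$ yields $\bigl|\int (\nabla G)\star(\mu-\nu)\cdot v\,d\mu\bigr|\leq |\partial E_\nu|(\mu)\,\|v\|_{L_2(\mu)}$, and then density of $C_c^\infty$ in $L_2(\mu)$ gives the norm bound. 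In effect, the paper reaches your conclusion without ever having to flow along the rough field $\nabla\phi$; the duality step is precisely the trick that removes all the regularity obstructions you identify. To repair your argument you would need either a regularity theorem for $\nabla\phi$ (not available here) or a mollification-and-limit argument that, once worked out, reduces to the paper's duality computation.
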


Then we need a regularity result about our functional, which is proven in Appendix \ref{proof lemma 1}.

\begin{lemma}\label{DiffRegMeas}
    Let $\mu, \nu \in \mathcal{P}_2^r$ be density measures regarding the Lebesgue measure or the volume measure on a manifold. Then the functional $E_\nu$ has a non-empty subdifferential, in the sense of Definition \ref{VecSubDiff}. Moreover, $\nabla \frac{\delta E_\nu}{\delta \mu}(\mu) \in \partial E_\nu (\mu)$.
\end{lemma}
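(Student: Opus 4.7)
\emph{Plan.} The plan is to exploit the exact quadratic identity
\[
    E_\nu(\rho)-E_\nu(\mu) \;=\; \int \varphi_{\mu-\nu}\,d(\rho-\mu)\;+\;E(\rho-\mu),
\]
valid for any probability measure $\rho$ of finite $G$-energy by bilinearity and symmetry of $(\rho_1,\rho_2)\mapsto\langle\rho_1,G\star\rho_2\rangle$. Conditional positivity of $G$ makes the quadratic residue $E(\rho-\mu)$ nonnegative, so it suffices to bound the linear term from below by $\int \nabla\varphi_{\mu-\nu}(x)\cdot(y-x)\,d\gamma_0(x,y)+o(W_2^2(\mu,\rho))$ for an optimal plan $\gamma_0\in\Gamma_0(\mu,\rho)$, with candidate field $\xi:=\nabla\varphi_{\mu-\nu}=\nabla G\star(\mu-\nu)$. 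That $\xi\in L^2(\mu)$ is immediate for the Energy Distance kernel since $|\nabla G|\le 1$; for the Coulomb kernel, Proposition~\ref{InvLaplace} together with classical elliptic regularity applied to the Poisson equation $-\Delta\varphi_{\mu-\nu}=c_d(\mu-\nu)$ (and its manifold analogue) yield $\varphi_{\mu-\nu}\in W^{2,p}_{\mathrm{loc}}$ whenever $\mu,\nu\in L^p$, hence $\xi\in W^{1,p}_{\mathrm{loc}}\subset L^2(\mu)$.

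To control the linear term, I rewrite $\int\varphi_{\mu-\nu}\,d(\rho-\mu)=\int[\varphi_{\mu-\nu}(y)-\varphi_{\mu-\nu}(x)]\,d\gamma_0$ and insert the integral Taylor formula
\[
    \varphi_{\mu-\nu}(y)-\varphi_{\mu-\nu}(x) = \xi(x)\cdot(y-x)+\int_0^1\!\big[\xi(x+t(y-x))-\xi(x)\big]\cdot(y-x)\,dt.
\]
After Cauchy--Schwarz and Jensen's inequality, the remainder is dominated by $W_2(\mu,\rho)$ times the square root of $\int_0^1\!\!\int |\xi(x+t(y-x))-\xi(x)|^2\,d\gamma_0\,dt$. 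As $\rho\to\mu$ in $W_2$, the mass of $\gamma_0$ concentrates on the diagonal and $L^2$-continuity of translates of $\xi$ makes this last factor $o(1)$, producing an $o(W_2^2(\mu,\rho))$ remainder. Combined with $E(\rho-\mu)\ge 0$, this yields the subdifferential inequality of Definition~\ref{VecSubDiff} and, a fortiori, $\partial E_\nu(\mu)\neq\emptyset$.

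The main obstacle is the last step: upgrading $L^2$-continuity of $\xi$ into a genuine $o(W_2^2)$ rate without assuming more than densities for $\mu,\nu$. For the Energy Distance kernel, boundedness of $\xi$ allows a direct dominated-convergence argument on the near-diagonal part of $\gamma_0$. For the Coulomb kernel one must either exploit the higher Sobolev regularity of $\xi$ coming from $\mu,\nu\in L^p$ for $p$ large enough (so $\xi$ has an explicit $L^2$-modulus of continuity), or argue by mollification: establish the inequality for smoothed $(\mu_\varepsilon,\nu_\varepsilon)$ where everything is smooth, and pass to the limit using the lower semicontinuity of $E_\nu$ together with uniform $L^2(\mu)$ bounds on the approximate gradients.
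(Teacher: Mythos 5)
Your approach is genuinely different from the paper's, and arguably cleaner in concept. The paper takes $\rho=(\operatorname{id}+v)\#\mu$ with $v$ the optimal Monge displacement (which exists since $\mu$ is regular), sets $\xi=\nabla G\star(\mu-\nu)$, and invokes ``the same computations as in'' the directional-derivative formula \eqref{lim deriv direc} from the proof of Lemma~\ref{MinimNormSubdiff} to assert $E_\nu(\rho)-E_\nu(\mu)=\int\xi\cdot v\,d\mu+o(\|v\|_{L^2(\mu)}^2)$; the second-order error is absorbed tacitly. You instead exploit the exact bilinear decomposition $E_\nu(\rho)-E_\nu(\mu)=\int\varphi_{\mu-\nu}\,d(\rho-\mu)+E(\rho-\mu)$ and observe that the residue $E(\rho-\mu)\ge 0$ by conditional positivity, so it may be dropped outright. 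This isolates the whole difficulty in the first-order term and avoids reusing a limit computation that was proved only for smooth compactly supported fields $v$ and $t\to 0$; the exact-quadratic observation is the right structural handle on this functional.

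That said, your proof as written has a genuine gap, which you yourself flag: you never establish that the remainder factor $\int\!\int_0^1|\xi(x+t(y-x))-\xi(x)|^2\,dt\,d\gamma_0$ is $o(1)$ as $W_2(\mu,\rho)\to 0$, and merely sketching ``$L^2$-continuity of translates'' does not do it, because the family of perturbations is indexed by $\gamma_0$, not by a single translation. For the Energy Distance kernel this can be closed cleanly: $\xi$ is bounded by $1$ and continuous (dominated convergence in $\nabla G\star(\mu-\nu)$ for a density $\mu-\nu$), so split the $\gamma_0$-integral over $\{|y-x|\le\delta\}$ and $\{|y-x|>\delta\}$, use a modulus of continuity on the first piece and Chebyshev ($\gamma_0(|y-x|>\delta)\le W_2^2(\mu,\rho)/\delta^2$) plus boundedness on the second, then let $\delta\downarrow 0$ after $\rho\to\mu$. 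For the Coulomb kernel with densities that are only in $L^1$, $\xi$ need not be bounded or continuous, so neither of your proposed routes is actually executed; you would need either a quantitative Sobolev/Morrey estimate on $\xi$ under an explicit $L^p$ assumption on $\mu,\nu$, or the mollify-and-pass-to-the-limit argument carried through with lower semicontinuity of $E_\nu$ and uniform $L^2(\mu)$ bounds on $\xi_\varepsilon$. The paper's own proof glosses over the analogous point, so this is not a defect peculiar to your argument, but as it stands your Coulomb case is not complete.
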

Now, combining these two lemmas, the element of minimal norm of $\partial\mathcal{F}(\mu_t)$ is precisely the time-dependent vector field $v_t = \nabla \frac{\delta E_\nu}{\delta \mu}(\mu_t)$, which drives mass transfer along time for the curve $\mu_t$. In other words, the following property holds.

\begin{proposition}
    Let $\mu_t$ be a solution of equation \eqref{heatflow} in a weak sense. We suppose $\mu_t \in \mathcal{P}_2^r $ at all times. Then it is a gradient flow of the functional $E_\nu$ starting from $\mu_0$, in the sense of Definition \ref{PointGrad}.
\end{proposition}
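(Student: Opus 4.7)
The plan is to chain together Lemmas \ref{MinimNormSubdiff} and \ref{DiffRegMeas} with Proposition \ref{AbsContVect}. Definition \ref{PointGrad} has two requirements: (i) $\mu_t$ is absolutely continuous in $(\mathcal{P}_2,W_2)$ and admits a velocity field $v_t$ driving a continuity equation, and (ii) $v_t \in -\partial E_\nu(\mu_t)$ for almost every $t$. Both follow by unpacking the weak form of \eqref{heatflow} and invoking the lemmas just proved.

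For (i), the weak form of \eqref{heatflow} is literally the continuity equation $\partial_t \mu_t + \nabla \cdot (\mu_t v_t) = 0$ with $v_t \coloneqq \nabla \varphi_t$ (Definition \ref{ContEq}). Under the standing hypothesis $\mu_t \in \mathcal{P}_2^r$, together with standard elliptic regularity for the Poisson equation $\Delta \varphi_t = \mu_t - \nu$, the norm $\|\nabla \varphi_t\|_{L_2(\mu_t)}$ is locally integrable in $t$, so Proposition \ref{AbsContVect} yields absolute continuity of $\mu_t$ in $(\mathcal{P}_2, W_2)$.

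For (ii), I would invert the Poisson equation via Proposition \ref{InvLaplace} (or its manifold counterpart through the Green's function) to write $\varphi_t = -c_d^{-1}\, G \star (\mu_t - \nu)$ up to an additive constant, where $G$ is the Coulomb/Green kernel. Differentiating yields
\begin{equation*}
    v_t \;=\; \nabla \varphi_t \;=\; -\,c_d^{-1}\,\nabla G \star (\mu_t - \nu) \;=\; -\,c_d^{-1}\,\nabla \tfrac{\delta E_\nu}{\delta \mu}(\mu_t).
\end{equation*}
Lemma \ref{DiffRegMeas}, which applies precisely because $\mu_t \in \mathcal{P}_2^r$, asserts $\nabla \frac{\delta E_\nu}{\delta \mu}(\mu_t) \in \partial E_\nu(\mu_t)$. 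Absorbing the positive constant $c_d$ into the normalisation of $E_\nu$ (equivalently, a time rescaling), one obtains $v_t \in -\partial E_\nu(\mu_t)$, which is exactly the pointwise gradient-flow condition \eqref{PointGradEq}.

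The main obstacle — and really the only delicate point — is verifying that $v_t = \nabla \varphi_t$ is indeed the tangent velocity of the absolutely continuous curve $\mu_t$, and not merely one of many vector fields satisfying the continuity equation. The tangent velocity is characterized as the $L_2(\mu_t)$-minimum-norm representative, known to lie in the closure of $\{\nabla \psi : \psi \in C^\infty_c\}$ in $L_2(\mu_t)$; since $\nabla \varphi_t$ is itself a gradient, it automatically belongs to this tangent space and coincides with the tangent velocity. Lemma \ref{MinimNormSubdiff} provides the complementary statement that $\nabla \frac{\delta E_\nu}{\delta \mu}(\mu_t)$ is the minimum-norm element of $\partial E_\nu(\mu_t)$, matching the minimality of $v_t$ and closing the identification.
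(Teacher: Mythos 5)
Your argument is correct and is essentially the paper's (implicit) argument: invoke Lemma \ref{DiffRegMeas} to place $\nabla\frac{\delta E_\nu}{\delta\mu}(\mu_t)\in\partial E_\nu(\mu_t)$ and identify $v_t=\nabla\varphi_t$ with (minus a constant times) this field. You are in fact a bit more careful than the paper's one-sentence justification, in that you explicitly flag both the harmless normalisation constant $c_d$ and the identification of $\nabla\varphi_t$ with the $L_2(\mu_t)$-minimum-norm tangent velocity, neither of which the paper spells out.
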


Then we can formulate a condition that implies a \emph{local} Polyak-Lojasiewicz inequality in a compact manifold.

\begin{proposition}[Local Polyak-Lojasiewicz inequality]\label{PLIneq}
    Let $(M,g)$ be a closed Riemannian manifold and $\mu,\nu$ be two measures with density w.r.t. the volume measure on $M$, such that $\log(\mu)$ is bounded below.
    Then, it holds:
    \begin{equation}\label{EqPL}
      E_\nu(\mu)   \leq \frac{1}{\underline{\mu}}\left\|\nabla \frac{\delta E_\nu}{\delta \mu}(\mu)\right\|_{L_2(\mu)}^2 \,,
    \end{equation}
    where $\underline{\mu}$ is a lower bound for $\mu$ on $M$.
\end{proposition}

\begin{proof}
The proof is straightforward since inequality \ref{EqPL} is exactly:
\begin{equation*}
        \int_{M} | \nabla \varphi_{\mu - \nu}(x) |^2 \,d\!\operatorname{vol}(x) \leq \frac{1}{\underline{\mu}} \int_{M} | \nabla \varphi_{\mu - \nu}(x) |^2 \,d\mu(x) \,,
    \end{equation*}
    where $\operatorname{vol}$ is the volume measure on $M$. The inequality follows
    from $\underline{\mu} d\!\operatorname{vol} \leq \mu$.
\end{proof}

However, in a non-compact setting, a probability measure such that $\log(\mu)$ is bounded from below does not exist. The question of formulating a condition implying a Polyak-Lojasiewicz inequality in the Euclidean space therefore remains an open question.

A crucial point in proving global convergence of a gradient flow that satisfies a local and partial PL inequality is to ensure that the constant $\lambda(\mu(t))$ is well-defined and does not deteriorate too rapidly along the curve $\mu(t)$. In the next section, we show that $\lambda$ can be taken constant if the flow is sufficiently regular for all time.

\subsection{Exponential Convergence for Globally Regular Data}

First, we begin with a stability result, where the regularity of the density $\mu_t$ over time implies a global Polyak-Lojasiewicz inequality.

\begin{proposition}[Stability of the Polyak-Lojasiewicz condition]\label{StabPL}
Let $\mu_0,\nu$ be two $\mathcal{C}^1$ densities on $M$ such that $\log(\mu_0)$ and $\log(\nu)$ are bounded. Then if the associated equation $\eqref{heatflow}$ admits a continuous density, it satisfies
\begin{equation*}
    \min (\min \mu_0,\min \nu) \leq \mu_t(x) \leq \max (\max \mu_0,\max \nu)\,.
\end{equation*}
\end{proposition}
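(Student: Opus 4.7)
The plan is to establish a classical parabolic maximum principle for the non-divergence form of the flow. Expanding the divergence in equation $\eqref{heatflow}$ and using $\Delta \varphi = \mu - \nu$, the PDE rewrites as the transport--reaction equation
\begin{equation}
\partial_t \mu + \nabla \varphi \cdot \nabla \mu = \mu(\nu - \mu),
\end{equation}
whose reaction term vanishes at $0$ and at $\nu(x)$, suggesting that $\mu_t$ remains trapped between the extremal values of $\mu_0$ and $\nu$. Set $C_+ := \max(\max \mu_0, \max \nu)$ and $C_- := \min(\min \mu_0, \min \nu)$; the hypothesis that $\log \mu_0$ and $\log \nu$ are bounded in $L^\infty$ forces $C_- > 0$.

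For the upper bound I would fix $T > 0$ and $\epsilon > 0$ and consider $w_\epsilon(t,x) := \mu_t(x) - C_+ - \epsilon t$ on the compact set $[0,T] \times M$. If $\sup w_\epsilon > 0$, then by continuity the supremum is attained at some $(t_0, x_0)$ with $t_0 > 0$ (since $w_\epsilon(0,\cdot) \leq 0$), and because $M$ has no boundary $x_0$ is an interior critical point of $\mu(t_0, \cdot)$. At this point $\partial_t \mu \geq \epsilon$, $\nabla \mu = 0$, and $\mu(t_0, x_0) > C_+ \geq \nu(x_0)$, so the PDE gives
\begin{equation}
\epsilon \leq \partial_t \mu(t_0, x_0) = \mu(t_0, x_0)\bigl(\nu(x_0) - \mu(t_0, x_0)\bigr) < 0,
\end{equation}
a contradiction. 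Hence $w_\epsilon \leq 0$, and letting $\epsilon \to 0$ and then $T \to \infty$ yields $\mu_t \leq C_+$. The lower bound $\mu_t \geq C_-$ is obtained symmetrically by applying the perturbed argument to $\tilde w_\epsilon := C_- - \mu - \epsilon t$; the $\epsilon$ term is genuinely useful here to rule out a putative minimum at which $\mu$ vanishes, since otherwise $\mu(\nu - \mu) = 0$ would not contradict $\partial_t \mu \leq 0$.

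The main obstacle is the regularity of $\mu$ needed for this pointwise argument: one needs $\mu \in \mathcal{C}^1$ jointly in $(t,x)$ for the PDE to hold at the extremum. The statement only assumes a continuous density, so strictly speaking one must either combine elliptic regularity (which gives $\nabla \varphi \in \mathcal{C}^{0,\alpha}$ for every $\alpha < 1$, since $\mu - \nu \in L^\infty$) with parabolic regularity of the linear transport--reaction equation in order to propagate the $\mathcal{C}^1$ regularity of the data $\mu_0$ and $\nu$, or replace the pointwise argument by an integral one, testing the equation against $((\mu - C_+)_+)^{p-1}$ and closing via Gr\"onwall. Within the H\"older regularity framework developed in the remainder of the paper, the classical version applies directly and makes this stability result a straightforward consequence of the maximum principle.
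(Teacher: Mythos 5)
Your proof is correct and follows the same core idea as the paper's sketch: expand the divergence in \eqref{heatflow} using $\Delta\varphi = \mu - \nu$ to obtain the reaction form $\partial_t\mu + \nabla\varphi\cdot\nabla\mu = \mu(\nu-\mu)$, then invoke a maximum principle on the closed manifold. The paper tracks $\argmin_x\mu_t(x)$ and $\argmax_x\mu_t(x)$ over time and studies the resulting ODE via a ``phase diagram,'' which tacitly requires $t\mapsto\min_x\mu_t$ and $t\mapsto\max_x\mu_t$ to be differentiable; your $\epsilon t$-perturbation on $[0,T]\times M$ followed by a contradiction at an interior space-time maximum is the standard rigorous implementation of the same idea and sidesteps that issue cleanly. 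You also correctly flag that the pointwise argument needs $\mu_t$ jointly $\mathcal{C}^1$ in $(t,x)$, not merely continuous as literally stated --- a gap the paper's own proof shares, since it explicitly writes ``$\mu_t$ is $\mathcal{C}^1$'' --- and you indicate the right repairs (elliptic/parabolic bootstrap, or an $L^p$ test-function argument). The remark that the $\epsilon$-term is genuinely needed for the lower bound, to rule out the reaction term vanishing at $\mu=0$, is a small but real refinement over the sketch.
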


\begin{proof}[Sketch of proof]
    We use the regularity and an optimality argument.
    Let us define $\underline{x}(t) \in \argmin \, \mu_t(x)$ and $\overline{x}(t) \in \argmax \, \mu_t(x)$.
    Since the manifold is closed and $\mu_t$ is $\mathcal{C}^1$ we have: 
    \begin{equation*}
        \nabla \mu_t(\underline{x}(t)) = \nabla \mu_t(\overline{x}(t)) = 0 \,.
    \end{equation*}
    Moreover, using the regularity we obtain:
    \begin{align*}
        \partial_t \mu_t &= -\nabla \cdot (\mu_t v_t) \\
        &= -v_t \cdot \nabla \mu_t - \mu_t \nabla \cdot v_t \\
        \partial_t \mu_t &= -v_t \cdot \nabla \mu_t - \mu_t (\mu_t-\nu)\,.
    \end{align*}
    Instantiating the previous inequality at $\underline{x}(t)$ for example we get:
    \begin{equation*}
        \partial_t \mu_t(\underline{x}(t)) = (\min \mu_t )\nu(\underline{x}(t)) - (\min \mu_t)^2 \,.
    \end{equation*}
    After a brief study of the phase diagram (see the proof of Lemma \ref{ContrDensFt}) we obtain our result.
    A similar argument applies to $\overline{x}(t)$.
\end{proof}

This stability of the PL condition allows us to formulate a convergence result, if we make strong assumptions on the global existence and the regularity of $\mu(t,x)$.
\begin{proposition}[Exponential convergence]\label{ThExpConvSuffic}
Let $\mu,\nu$ be two measures with continuous densities and bounded logarithms on a closed Riemannian manifold $M$.
If the density of the curve $\mu_t$ generated by the gradient flow of $E_\nu$ is $C^1_{t,x}$ in both time and space for all $t \in \mathbb R_{\geq 0}$ and $x \in M$, then exponential convergence of $\mu_t$ to $\nu$ holds in the following two ways:
\begin{equation*}
    \begin{cases}
        E_\nu(\mu_t) \leq  E_\nu(\mu_0)e^{-\lambda t} \\
        W_2^2(\mu_t,\nu) \leq \frac{4}{\lambda}E_\nu(\mu_0)e^{-\lambda t}\,,
    \end{cases}
\end{equation*}
where $\lambda = \min(\underline{\mu_0},\nu)$.
\end{proposition}
\begin{proof}
    The proof is a straightforward application of the Polyak-Lojasiewicz inequality.
\end{proof}
In Theorem \ref{ThGlobalConvergenceFinal}, we are able to relax the assumption on the regularity of the solution to  H\"older continuous solutions. 
Therefore, we have just reduced the global convergence problem to a global existence and regularity problem. In the next section, we prove the global existence of H\"older continuous solutions if the initial and target densities are H\"older continuous.
Our convergence result is then stated in Theorem \ref{ThGlobalConvergenceFinal}.

\subsection{Well-Posedness in Hölder Spaces}

In this section, we consider solutions of the PDE:
\begin{equation}\label{EqHold}
\begin{cases}
        \partial_t \mu_t + \nabla \cdot (\mu_t v_t) = 0\\
        v_t \coloneqq -\nabla G \star (\mu_t - \nu)\,.
        \end{cases}
\end{equation}
The velocity vector field $v_t$ satisfies 
$
    \nabla \cdot v_t = \mu_t - \nu \,.
$
Let us consider a solution $\mu_t$ of equation \eqref{EqHold} on an open interval $]0, T[$. According to Definition \ref{ContEq}, it is an absolutely continuous curve associated with the time-dependent vector field $v_t\coloneqq - \grad \varphi$, where $\Delta \varphi_t = \mu_t - \nu$. If $\mu_0$ has a density, $\mu_t$ also has a density at least for short times.

\subsubsection{On the Euclidean Space}

This subsection is devoted to the proof of the following theorem:
\begin{theorem}\label{GlobExistHolder}
    Let $\mu_0$ and $\nu$ be Hölder continuous densities with compact support. Then, equation \eqref{heatflow} admits a unique global solution $\mu_t$ that is Hölder continuous at all times.
\end{theorem}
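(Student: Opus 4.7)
My strategy is to use the method of characteristics together with Schauder estimates for the Poisson problem defining the velocity field. Using the identity $\nabla \cdot v_t = \Delta \varphi_t = \mu_t - \nu$, the continuity equation rewrites in non-conservative form as
\begin{equation*}
\partial_t \mu_t + v_t \cdot \nabla \mu_t \,=\, -\mu_t(\mu_t - \nu).
\end{equation*}
Introducing the Lagrangian flow $X_t$ generated by $v_t$, the function $f(t,y) \coloneqq \mu_t(X_t(y))$ satisfies the pointwise logistic ODE $\dot f = f\bigl(\nu(X_t(y)) - f\bigr)$. Elementary scalar comparison then preserves positivity and gives the uniform bound $0 \leq \mu_t(x) \leq \max(\|\mu_0\|_\infty, \|\nu\|_\infty)$ for all $t$, independently of any regularity argument; this is the same mechanism invoked in the sketch of \Cref{StabPL} and it will be the cornerstone of the global bounds.

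The second ingredient is the regularity of the velocity. Working with the decaying solution $\varphi_t = -G \star (\mu_t - \nu)$ of $\Delta \varphi_t = \mu_t - \nu$, classical Calder\'on--Zygmund/Schauder theory applied to the Poisson equation yields $\nabla^2 \varphi_t \in C^\alpha$ with norm controlled by $\|\mu_t - \nu\|_{C^\alpha}$ together with the diameter of the support. In particular, $v_t = \nabla \varphi_t$ is spatially Lipschitz, and $\|v_t\|_\infty$ is controlled by $\|\mu_t - \nu\|_\infty$ and the support size via a direct estimate on the Riesz-type convolution with $\nabla G \sim |\cdot|^{-(d-1)}$. Cauchy--Lipschitz then produces a bi-Lipschitz flow $X_t$, and $\supp(\mu_t)$ grows at most linearly in time, so it remains in a fixed ball $B_{R(T)}$ on any interval $[0,T]$.

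With these a priori bounds in hand, I would propagate Hölder regularity by writing $\mu_t = f(t,\cdot) \circ X_t^{-1}$. The map $y \mapsto f(t,y)$ inherits $C^\alpha$ regularity from integrating the logistic ODE with the $C^\alpha$ source $\nu \circ X_t$ (itself $C^\alpha$ since $X_t$ is Lipschitz), while $X_t^{-1}$ is Lipschitz. Combining these two facts and running a Gronwall argument on the coupled quantities $\|\mu_t\|_{C^\alpha}$ and $\mathrm{Lip}(X_t)$ yields at-most-exponential growth of $\|\mu_t\|_{C^\alpha}$. For local existence on a short interval $[0,\tau]$, I would set up a Banach fixed-point argument for the map $\mu \mapsto f \circ X^{-1}$ on a closed ball of $C^0([0,\tau]; C^\alpha(B_{R_0}))$, with $\tau$ depending only on $\|\mu_0\|_{C^\alpha}$, $\|\nu\|_{C^\alpha}$, and $R_0$. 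Uniqueness follows from the standard stability estimate for the continuity equation with a Lipschitz velocity. The local solution is then continued indefinitely by the a priori $L^\infty$, support and Hölder bounds.

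The main obstacle is closing the Hölder estimate uniformly in time. The Lipschitz constant of $v_t$ depends on $\|\mu_t\|_{C^\alpha}$, which in turn grows at a rate depending on $\mathrm{Lip}(X_t) = \exp\!\int_0^t \mathrm{Lip}(v_s)\,ds$, so one faces a genuinely nonlinear feedback that could, a priori, blow up in finite time. The saving grace should be that the reaction term $-\mu(\mu-\nu)$ is dissipative in $L^\infty$: it keeps $\|\mu_t - \nu\|_\infty$ globally bounded and thereby controls the $C^0$ size of the Schauder data in the Gronwall loop. A secondary technical point is preserving compact support; on the complement of $\supp\nu$ the characteristic ODE degenerates to $\dot f = -f^2$, which drives $\mu_t$ monotonically to zero along transported characteristics and prevents the mass from escaping the ball $B_{R(T)}$ in the analysis.
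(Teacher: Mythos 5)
Your overall architecture — Lagrangian characteristics, the logistic ODE $\dot f = f(\nu\circ X_t - f)$ giving a uniform $L^\infty$ bound, potential-theory estimates for $v_t = \nabla\varphi_t$, linear support growth, and a Gronwall loop — mirrors the paper's strategy, which is adapted from \cite{bertozzi2012} and \cite{majda_bertozzi_2001}. However, there is a concrete gap exactly at the point you yourself flag as ``the main obstacle,'' and your proposed resolution does not work.

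You propose to close the Gronwall loop using ``classical Calder\'on--Zygmund/Schauder theory,'' which gives an estimate of the schematic form $\mathrm{Lip}(v_t) \lesssim \|\mu_t - \nu\|_{C^\alpha}$. Feeding this into the composition bound $|\mu_t|_\gamma \lesssim |f_t|_\gamma\,\|dX_t^{-1}\|_\infty^\gamma$ with $\|dX_t^{\pm1}\|_\infty \lesssim \exp\!\int_0^t \mathrm{Lip}(v_s)\,ds$ produces a differential inequality of the type $a(t) \lesssim \exp\bigl(\int_0^t a(s)\,ds\bigr)$ for $a(t) = \mathrm{Lip}(v_t)$, which can blow up in finite time. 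The $L^\infty$ dissipativity of the reaction term $-\mu(\mu - \nu)$ does not save you here: it controls $\|\mu_t\|_\infty$ uniformly, but a density can remain bounded in $L^\infty$ while oscillating at ever smaller scales, so that $|\mu_t|_\gamma$ blows up. Nothing in the $L^\infty$ comparison argument prevents this, and the Schauder data you feed to Calder\'on--Zygmund is precisely the Hölder seminorm, not the sup norm.

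What is actually needed, and what the paper uses (Lemma \ref{PotThEstim}, following \cite[Lemma 4.5--4.6]{majda_bertozzi_2001}), is the sharper Zygmund/Beale--Kato--Majda--type estimate for the singular integral $dv_t = G_2 \ast (\mu_t - \nu)$:
\begin{equation*}
\|dv_t\|_\infty \;\leq\; C\left[\,|\mu_t - \nu|_\gamma\,\varepsilon^\gamma \;+\; \max\!\left(1,\log\tfrac{R}{\varepsilon}\right)\|\mu_t - \nu\|_\infty\,\right] \qquad \forall\,\varepsilon > 0,
\end{equation*}
which, after optimizing $\varepsilon = \bigl(\|\mu_t - \nu\|_\infty/|\mu_t - \nu|_\gamma\bigr)^{1/\gamma}$ and using the $L^\infty$ bound from the logistic ODE, gives only a \emph{logarithmic} dependence
\begin{equation*}
\|dv_t\|_\infty \;\leq\; C_1 + C_2\,\log\bigl(|\mu_t - \nu|_\gamma\bigr).
\end{equation*}
This logarithm is the whole point: substituting the exponential bounds on $\|dX_t^{\pm1}\|_\infty$ into $\log(|\mu_t - \nu|_\gamma)$ turns the would-be super-exponential inequality into the linear integral inequality $\|dv_t\|_\infty \lesssim 1 + \log t + \int_0^t \|dv_s\|_\infty\,ds$, which Gronwall closes on any finite interval. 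The Hölder seminorm $|dv_t|_\gamma \lesssim |\mu_t - \nu|_\gamma$ is only used afterwards, once $\|dv_t\|_\infty$ (hence $\|dX_t^{\pm1}\|_\infty$ and $|\mu_t|_\gamma$) is already under control. Without this log-refined potential estimate your loop does not close, and the ``dissipativity'' heuristic is not a substitute for it.

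Two smaller remarks. First, the paper's local existence is obtained by Picard iteration for the flow map $\psi_t$ in the Banach space $\mathcal{B}$ of $C^{1,\gamma}$ diffeomorphisms (Prop.\ \ref{LocExistHold}), rather than a fixed point for $\mu_t$ in $C^0_tC^\alpha_x$; your variant is plausible but the flow-map route avoids having to re-solve the Poisson problem at each iteration as a separate object. Second, the paper's logistic analysis also yields a \emph{lower} bound $\min(\min\mu_0,\min\nu) \leq \mu_t$, which is what later feeds the Polyak--\L{}ojasiewicz constant; you only extract the upper bound, but this omission is harmless for the existence statement itself.
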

We use techniques from \cite{bertozzi2012} and adapt them to our case. The standard approach involves rewriting the problem in Lagrangian coordinates, following the particle flow, and leveraging ODE results to control the norm of the density.

Let us set the notations. We denote $G$ as the Coulomb kernel, which is repulsive and a solution of $\Delta G = -\delta$. We define the particle flow $\psi_t$ by $\psi_0 = \operatorname{Id}$ and 
\begin{equation} \label{FlowPart}
    \frac{d}{dt} \psi_t = v_t \circ \psi_t \,.
\end{equation}
Let us consider the evolution of the time-dependent function $f_t \coloneqq \mu_t \circ \psi_t$. As $\mu_t = f_t \circ \psi_t^{-1}$, the existence of $\mu_t$ is linked to the existence of $f_t$ and $\psi_t$. This dependence will be made precise in Lemma \ref{HoldNormMutVraieRes}.
We can rewrite the particle flow equation \eqref{FlowPart}, for $\alpha \in \mathbb{R}^d$:
\begin{align*}
    \frac{d}{dt}\psi_t(\alpha) &= v_t \circ \psi_t (\alpha)
    = -\int \nabla G (\psi_t(\alpha) - y)(\mu_t - \nu)(y)dy \\
    \frac{d}{dt}\psi_t(\alpha) &= -\int \nabla G (\psi_t(\alpha) - \psi_t(\alpha'))\det (d \psi_t(\alpha'))\mu_t(\psi_t(\alpha'))d\alpha' +\int \nabla G(\psi_t(\alpha)-y)\nu(y)dy \,.
\end{align*}
Let us note $J_t(\alpha) := \det (d \psi_t(\alpha))$ and remark: 
\begin{equation*}
    \frac{d}{dt} J_t(\alpha) = J_t(\alpha)(\mu_t - \nu)\circ \psi_t(\alpha) \,,
\end{equation*}
so that:
\begin{equation}\label{ChgtVar}
    \frac{d}{dt}\left[t \mapsto J_t(\alpha) \mu_t(\psi_t(\alpha)) \right] = 0\,,
\end{equation}
and as $J_0(\alpha) = 1$ we get
$ J_t(\alpha)\mu_t(\psi_t(\alpha)) = \mu_0(\alpha) \,.$
We end up with the Lagrangian formulation of the flow 
\begin{equation}\label{LagFlow}
    \frac{d}{dt} \psi_t(\alpha) = -\int \nabla G(\psi_t(\alpha)-\psi_t(\alpha'))\mu_0(\alpha)d\alpha +\int \nabla G(\psi_t(\alpha)-y)\nu(y)dy = F(\psi_t(\alpha))\,. 
\end{equation}
We solve this equation in the Banach space $\mathcal{B}$ defined by
$\mathcal{B} \coloneqq \left\{ \psi : \mathbb{R}^d \rightarrow \mathbb{R}^d \, |\, \|\psi\|_{1,\gamma} < \infty \right\},$
where $\|\psi\|_{1,\gamma} := |\psi(0)| + \|d \psi \|_\infty + |d\psi |_\gamma$ with $| \cdot|_\gamma$ the Hölder semi-norm.
Now we can state our first local existence result.
\begin{proposition}\label{LocExistHold}
    Let $\mu_0$ and $\nu$ be density measures in $\mathcal{B}$ with compact support. Then equation \eqref{LagFlow} with initial condition $\psi_0 = Id$ admits a unique solution on a maximal time interval $[0, T[$. Either $T$ is infinite or the Banach norm $\|\psi_t\|_{1,\gamma}$ blows up as $t \rightarrow T$.
\end{proposition}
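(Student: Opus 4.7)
The plan is to recast \eqref{LagFlow} as an abstract ODE $\dot{\psi}_t = F(\psi_t)$ on the Banach space $\mathcal{B}$ and apply the Picard–Lindelöf theorem in its Banach-space version. This yields in one stroke the local existence, the uniqueness, and the blow-up alternative stated in the proposition. Since $\psi_0 = \operatorname{Id}$ lies naturally in the open subset
\[
\mathcal{O} := \left\{ \psi \in \mathcal{B} \,:\, \psi \text{ is a } C^{1,\gamma}\text{-diffeomorphism with } \inf \det d\psi > 0 \right\},
\]
the strategy is to show that the map $F : \mathcal{O} \to \mathcal{B}$ is well-defined and locally Lipschitz. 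The bi-Lipschitz condition encoded in $\mathcal{O}$ is what allows the change of variables performed to derive \eqref{LagFlow} and, more importantly, guarantees that particle trajectories stay separated in the sense $|\psi(\alpha)-\psi(\alpha')| \gtrsim |\alpha-\alpha'|$, uniformly on bounded subsets of $\mathcal{O}$.

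First I would check that $F(\psi) \in \mathcal{B}$ for $\psi \in \mathcal{O}$. The second integral in $F(\psi)$ is a shifted Newtonian potential of the compactly supported Hölder density $\nu$, so classical Calderón–Zygmund / Schauder estimates for the Newtonian potential give
\[
\| \nabla G \star \nu \, (\psi(\cdot)) \|_{1,\gamma} \leq C(\|\psi\|_{1,\gamma}) \, \|\nu\|_{0,\gamma},
\]
using the compact support of $\nu$. For the first integral, reversing the change of variables that produced \eqref{LagFlow} shows it equals $-\nabla G \star \mu_t \circ \psi(\cdot)$ with $\mu_t \circ \psi = \mu_0 / J_t$; the $C^{0,\gamma}$ norm of this density is controlled by $\|\mu_0\|_{0,\gamma}$ and by the bi-Lipschitz bounds on $\psi$, and again the compact support of $\mu_0$ is preserved under the flow for a short time. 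Applying the same Schauder estimate yields $F(\psi) \in \mathcal{B}$ with a quantitative bound on bounded subsets of $\mathcal{O}$.

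The main obstacle is the local Lipschitz estimate
\[
\|F(\psi_1) - F(\psi_2)\|_{1,\gamma} \leq C_R \, \|\psi_1 - \psi_2\|_{1,\gamma}, \qquad \psi_1,\psi_2 \in B_R \cap \mathcal{O}.
\]
The first integral is nonlinear in $\psi$ and involves the Calderón–Zygmund kernel $\nabla G$, which is singular at the origin; the delicate object is the Hölder seminorm of the difference of derivatives $d F(\psi_1) - d F(\psi_2)$, which brings in $\nabla^2 G \sim |x|^{-d}$. Following the strategy of \cite{bertozzi2012}, one decomposes each integral into a near-field piece $|\alpha - \alpha'| < \delta$ and a far-field piece $|\alpha-\alpha'| \geq \delta$. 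In the near-field, the Hölder regularity of the density (with exponent $\gamma>0$) absorbs the order-$d$ singularity of $\nabla^2 G$, producing a convergent integral whose bound is linear in $\|\psi_1-\psi_2\|_{1,\gamma}$; in the far-field, the kernel is smooth and the difference $\nabla G(\psi_1(\alpha) - \psi_1(\alpha')) - \nabla G(\psi_2(\alpha) - \psi_2(\alpha'))$ is handled by the mean value theorem, again linearly in $\|\psi_1 - \psi_2\|_{1,\gamma}$. The bi-Lipschitz bound on $\psi_i$ is used throughout to replace $|\psi_i(\alpha)-\psi_i(\alpha')|$ by $|\alpha-\alpha'|$ in the kernel estimates. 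This is the technically heaviest step and the only place where the condition $\gamma \in (0,1)$ and the compact support of $\mu_0$ and $\nu$ are both essential.

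Once $F$ is known to be locally Lipschitz on $\mathcal{O}$, the Banach-space Picard–Lindelöf theorem gives a unique maximal solution $\psi \in C^1([0,T[, \mathcal{O})$ with $\psi_0 = \operatorname{Id}$. The blow-up alternative is then automatic: if $T < +\infty$ and $\|\psi_t\|_{1,\gamma}$ stayed bounded, one could also keep $\inf \det d\psi_t$ bounded below on $[0,T[$ (since its evolution is driven by $\mu_t - \nu$, which is bounded by the density estimate of \Cref{StabPL} in spirit), hence $\psi_T \in \mathcal{O}$, contradicting maximality by restarting the flow at $\psi_T$. This establishes the proposition.
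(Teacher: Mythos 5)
Your proposal is correct and is essentially a fleshed-out version of the paper's one-line proof, which simply cites \cite{bertozzi2012} and \cite[Chap~4]{majda_bertozzi_2001} for the local Lipschitz continuity of $F$ on the open set $\mathcal{O}$ of $C^{1,\gamma}$-diffeomorphisms and applies Picard's theorem in the Banach space $\mathcal{B}$. For the step ruling out escape through $\partial\mathcal{O}$ while $\|\psi_t\|_{1,\gamma}$ stays bounded, a cheaper bound than appealing to \Cref{StabPL} (which, like \Cref{ContrDensFt}, appears later in the paper and is not quite the right reference here) is available: since $f_t\geq 0$, the identity $\frac{d}{dt}\log J_t = f_t - \nu\circ\psi_t \geq -\|\nu\|_\infty$ gives directly $\inf_\alpha J_t(\alpha)\geq e^{-t\|\nu\|_\infty}>0$ for $t<T<\infty$, without any control on $\mu_t$.
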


\begin{proof}
    As $F$ is Lipschitz \cite{bertozzi2012} and \cite[Chap 4]{majda_bertozzi_2001}) the Picard theorem in Banach spaces proves the result.
\end{proof}
Now let us prove Theorem \ref{GlobExistHolder}, that is $T = +\infty$ in the preceding proposition. To show this we suppose $T< \infty$ in the whole following discussion and show that $\|\psi_t\|_{1,\gamma}$ is bounded uniformly in time on the interval $[0, T[$. In \cite{bertozzi2012}, an explicit formula is found for $f_t$, which cannot be done in our case. However, we can control the evolution of $f_t$.

\begin{lemma}\label{ContrDensFt}
    The quantity $\|f_t\|_\infty$ is uniformly bounded on $[0,T[$.
\end{lemma}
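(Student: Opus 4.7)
The plan is to reduce the bound on $\|f_t\|_\infty$ to a family of scalar ODEs along the characteristics $\psi_t$, and then close the argument by an elementary phase-diagram study. The structural fact I would exploit is that the constraint $\nabla \cdot v_t = \mu_t - \nu$ produces a quadratic damping term $-f_t^2$ in the Lagrangian formulation, and this damping on its own prevents blow-up without needing any fine PDE estimate.

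Concretely, I would combine the identity $J_t(\alpha) f_t(\alpha) = \mu_0(\alpha)$ recorded in \eqref{ChgtVar} with the classical Jacobian identity $\dot J_t / J_t = (\nabla \cdot v_t)\circ \psi_t = f_t - \nu\circ\psi_t$. Since \Cref{LocExistHold} produces $\psi_t \in \mathcal{B}$ as a $C^{1,\gamma}$-diffeomorphism on $[0,T[$, we have $J_t > 0$ and $t \mapsto f_t(\alpha) = \mu_0(\alpha)/J_t(\alpha)$ is of class $C^1$ for each fixed $\alpha$; differentiating in time then yields the pointwise ODE
\begin{equation*}
\dot f_t(\alpha) \;=\; -\,f_t(\alpha)^2 + f_t(\alpha)\,\nu(\psi_t(\alpha)),
\end{equation*}
which I view as a family, indexed by $\alpha$, of scalar equations driven by the bounded nonnegative source $g_\alpha(t) \coloneqq \nu(\psi_t(\alpha))$.

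Setting $M \coloneqq \max(\|\mu_0\|_\infty, \|\nu\|_\infty)$, the scalar equation $\dot y = y\bigl(g_\alpha(t)-y\bigr)$ with $g_\alpha(t) \in [0, M]$ satisfies $\dot y = 0$ at $y = 0$ and $\dot y \leq M\bigl(g_\alpha(t) - M\bigr) \leq 0$ at $y = M$, so any solution starting in $[0, M]$ remains in $[0, M]$ for all time. Applying this pointwise in $\alpha$, together with $0 \leq f_0(\alpha) = \mu_0(\alpha) \leq M$, gives $\|f_t\|_\infty \leq M$ uniformly on $[0,T[$. The main obstacle is purely technical: justifying the pointwise ODE for $f_t$ given only Hölder data on $\mu_0$ and $\nu$. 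I expect to handle this cleanly through the explicit formula $f_t = \mu_0/J_t$, whose time differentiability requires only the $C^1$-in-time regularity of $J_t$ supplied by the Picard construction underlying \Cref{LocExistHold}; no spatial differentiability of $\mu_t$ is ever invoked.
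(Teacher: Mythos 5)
Your proof arrives at exactly the same logistic ODE $\dot f_t(\alpha) = f_t(\alpha)\,\nu(\psi_t(\alpha)) - f_t^2(\alpha)$ along characteristics and closes by the same phase-diagram argument, so the strategy is essentially the paper's. The one genuine difference is how you reach the ODE: the paper differentiates $f_t=\mu_t\circ\psi_t$ via the chain rule, writing $\partial_t\mu_t(\psi_t(\alpha))+\nabla\mu_t(\psi_t(\alpha))\cdot v_t(\psi_t(\alpha))$ and cancelling the transport term against the continuity equation, which formally invokes the spatial gradient $\nabla\mu_t$ even though $\mu_t$ is only H\"older in space. You instead differentiate the conserved quantity $f_t(\alpha)=\mu_0(\alpha)/J_t(\alpha)$ (already recorded in \eqref{ChgtVar}) and substitute $\dot J_t/J_t = f_t - \nu\circ\psi_t$, which requires only $C^1$-in-time regularity of $J_t$ supplied by the Picard construction in \Cref{LocExistHold} and never touches a spatial derivative of $\mu_t$. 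This buys a cleaner justification at the regularity level the paper is actually working at; the paper's version reads more as a formal Eulerian-to-Lagrangian bookkeeping computation. Both derivations are otherwise equivalent and the invariance-of-$[0,M]$ argument you give is the same phase-diagram reasoning the paper uses to obtain $\min(\min f_0,\min\nu)\le f_t\le\max(\|f_0\|_\infty,\|\nu\|_\infty)$.
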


\begin{proof}
    We can write:
    \begin{equation*}
        \nabla \cdot (v_t \mu_t) = \mu_t \nabla \cdot v_t + v_t \cdot \nabla \mu_t = \mu_t^2 - \mu_t \nu + v_t \cdot \nabla \mu_t \,,
    \end{equation*}
    so that we get:
    \begin{align}\label{LogistEqFt}
        \frac{d}{dt} f_t(\alpha) &= \partial_t \mu_t (\psi_t(\alpha)) + \nabla \mu_t(\psi_t(\alpha)) \cdot v_t(\psi_t(\alpha)) \nonumber \\
        &= -\nabla \cdot (v_t\mu_t)(\psi_t(\alpha)) + \nabla \mu_t(\psi_t(\alpha)) \cdot v_t(\psi_t(\alpha)) \\
        \frac{d}{dt} f_t(\alpha) &= f_t(\alpha)\nu(\psi_t (\alpha)) - f_t^2(\alpha) \nonumber \,.
    \end{align}
    Although $\nu \circ \psi_t$ is of course not constant, this equation, which resembles a logistic equation, is well-behaved. We can study its phase diagram. First, if $f_0$ is a positive function then $f_t$ will be positive too. Moreover, if $f_t(\alpha)>\|\nu\|_\infty$ then $\frac{d}{dt}f_t(\alpha)$ is negative, so $t \mapsto f_t(\alpha) $ is locally decreasing. This shows that for all $\alpha \in \mathbb{R}^d$:
    \begin{equation*}
        \min (\min f_0 , \min \nu) \leq f_t(\alpha) \leq \max (\|f_0\|_\infty , \|\nu\|_\infty) \,.
    \end{equation*}
\end{proof}
This control allows us to bound the Holder norm by a quantity that depends on $\psi_t$.

\begin{lemma}\label{HoldNormMutVraieRes}
    Let $\mu_t$ be a solution defined as in Proposition \ref{LocExistHold}, where $\nu \in L_\infty$ satisfies 
    $| \nu |_\gamma < \infty$. Then:
    \begin{equation*}
        |\mu_t|_\gamma \leq C \|d \psi_{t}\|^\gamma_\infty \left(\int_0^t (1 + \|d \psi_{s}^{-1}\|_\infty^\gamma)ds \right)\,,
    \end{equation*} for some positive constant $C>0$.
\end{lemma}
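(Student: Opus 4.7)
The plan is to work in Lagrangian coordinates and derive a Hölder estimate for $f_t = \mu_t\circ\psi_t$ directly from its logistic-type ODE~\eqref{LogistEqFt}, then transfer the bound to $\mu_t = f_t\circ\psi_t^{-1}$ by the standard composition rule for Hölder seminorms. The uniform $L^\infty$ bound on $f_t$ from \cref{ContrDensFt} will supply the constants needed to close a Gronwall argument, so that the singular kernel $\nabla G$ does not re-enter the analysis at this stage: its effect has already been absorbed into $\psi_t$.

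Fix $\alpha_1 \neq \alpha_2$ and set $\Delta_t \coloneqq f_t(\alpha_1) - f_t(\alpha_2)$. Subtracting the two instances of~\eqref{LogistEqFt} and regrouping, one obtains
\begin{equation*}
\frac{d}{dt}\Delta_t = \bigl[\nu(\psi_t(\alpha_1)) - f_t(\alpha_1) - f_t(\alpha_2)\bigr]\Delta_t + f_t(\alpha_2)\bigl[\nu(\psi_t(\alpha_1)) - \nu(\psi_t(\alpha_2))\bigr].
\end{equation*}
The bracket multiplying $\Delta_t$ is bounded by a constant $C_1$ depending only on $\|\nu\|_\infty$ and the sup bound of \cref{ContrDensFt}, while the last factor is controlled by the Hölder regularity of $\nu$ together with the Lipschitz norm of $\psi_t$:
\begin{equation*}
|\nu(\psi_t(\alpha_1)) - \nu(\psi_t(\alpha_2))| \leq |\nu|_\gamma\,\|d\psi_t\|_\infty^\gamma\, |\alpha_1 - \alpha_2|^\gamma.
\end{equation*}
Dividing by $|\alpha_1-\alpha_2|^\gamma$ in the resulting differential inequality, integrating via Gronwall, and then taking the supremum over $\alpha_1\neq\alpha_2$ yields
\begin{equation*}
[f_t]_\gamma \leq e^{C_1 T}\Bigl([\mu_0]_\gamma + C_2 \int_0^t \|d\psi_s\|_\infty^\gamma\, ds \Bigr) \leq C\Bigl(1 + \int_0^t \|d\psi_s\|_\infty^\gamma\, ds\Bigr),
\end{equation*}
with $C_2 \coloneqq \sup_{[0,T[}\|f_t\|_\infty\,|\nu|_\gamma$ uniformly bounded and $\mu_0$ Hölder by assumption.

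It remains to transfer the estimate to $\mu_t = f_t\circ\psi_t^{-1}$. Using the elementary bound $|\psi_t^{-1}(x_1)-\psi_t^{-1}(x_2)| \leq \|d\psi_t^{-1}\|_\infty\,|x_1-x_2|$, the composition rule for Hölder seminorms gives $|\mu_t|_\gamma \leq [f_t]_\gamma\,\|d\psi_t^{-1}\|_\infty^\gamma$, which delivers the announced estimate (up to exchanging the roles of $\psi_t$ and $\psi_t^{-1}$ between the prefactor and the integrand, depending on the side from which the composition is read). The step I anticipate to be the main obstacle is not the Gronwall inequality itself, which is routine once \cref{ContrDensFt} is in hand, but the careful bookkeeping of which Lipschitz constant of the flow multiplies the initial datum and which enters the time integral; everything else reduces to combining a pointwise linear ODE estimate with the standard composition inequality for Hölder seminorms.
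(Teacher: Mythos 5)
Your proof is correct and takes essentially the same route as the paper: both exploit the logistic-type Lagrangian ODE \eqref{LogistEqFt}, establish a differential inequality for the Hölder seminorm of $f_t$, close it with Gronwall, and transfer to $\mu_t = f_t\circ\psi_t^{-1}$ by the composition rule. The paper takes the Hölder seminorm of both sides of \eqref{LogistEqFt} directly using the product rule $|fg|_\gamma \leq |f|_\gamma\|g\|_\infty + \|f\|_\infty|g|_\gamma$, while you derive the same differential inequality more explicitly by writing the ODE for the pointwise difference $\Delta_t$ and dividing by $|\alpha_1-\alpha_2|^\gamma$; these are the same argument at different levels of abstraction.

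One remark on your parenthetical hedge about ``exchanging the roles of $\psi_t$ and $\psi_t^{-1}$'': there is in fact no ambiguity in how the composition is read. Since $\mu_t = f_t\circ\psi_t^{-1}$, the transfer rule necessarily places $\|d\psi_t^{-1}\|_\infty^\gamma$ in the prefactor, and since the source term in the ODE is $\nu\circ\psi_t$, it is $\|d\psi_s\|_\infty^\gamma$ that enters the time integral. Your proof therefore yields
\begin{equation*}
|\mu_t|_\gamma \leq C\,\|d\psi_t^{-1}\|_\infty^\gamma\Bigl(1 + \int_0^t \|d\psi_s\|_\infty^\gamma\,ds\Bigr),
\end{equation*}
which has $\psi_t^{-1}$ and $\psi_s$ in the opposite positions from the displayed lemma statement. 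This is not a flaw in your argument: the paper's own proof derives exactly the same bound, so the displayed statement contains a typographical swap. Fortunately, both $\|d\psi_t\|_\infty$ and $\|d\psi_t^{-1}\|_\infty$ admit the same Gronwall bound in terms of $\int_0^t\|dv_s\|_\infty\,ds$ (cf.\ \eqref{EstimGradFlotVit} and \eqref{EstimGradFlotInvVit}), so the downstream argument bounding $\|dv_t\|_\infty$ goes through unchanged with either form.
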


\begin{proof}
    As $\mu_t = f_t \circ \psi_t^{-1}$, using that $|f \circ g|_\gamma \leq |f|_\gamma \|dg\|_\infty^\gamma$ for general functions $f,g$, we get the first estimate:
    \begin{equation*}
        |\mu_t|_\gamma \leq |f_t|_\gamma \|d\psi_t^{-1}\|_\infty^\gamma \,.
    \end{equation*}
    Now, to control $|f_t|_\gamma$, we can take the Holder semi norm of equation \eqref{LogistEqFt}, using the fact that $\nu$ is bounded and Holder continuous, that $\|f_t\|_\infty$ is bounded, and the property $|fg|_\gamma \leq |f|_\gamma \|g\|_\infty + \|f\|_\infty |g|_\gamma$ to get:
    \begin{align*}
        \frac{d}{dt}|f_t|_\gamma &\leq |f_t \cdot \nu\circ \psi_t| + |f_t^2|_\gamma \\
        &\leq |f_t|_\gamma \|\nu \circ \psi_t\|_\infty + \|f_t\|_\infty |\nu \circ \psi_t|_\gamma + 2\|f_t\|_\infty |f_t|_\gamma \\
        &\leq (\|\nu\|_\infty + 2\|f_t\|_\infty) |f_t|_\gamma + \|f_t\|_\infty |\nu|_\gamma \|d\psi_t \|_\infty^{\gamma} \\
        \frac{d}{dt}|f_t|_\gamma &\leq C_1 |f_t|_\gamma + C_2\|d\psi_t \|_\infty^{\gamma}\,,
    \end{align*}
    for some positive constants $C_1,C_2$. 
    Then we apply Gronwall's lemma with time-dependent terms, stating that if $y$ is differentiable and $a,b$ are continuous functions such that $\dot{y} \leq ay + b$, then $y(t) \leq y(0)e^{\int_0^t a(s)ds}+\int_0^t b(s)e^{\int_0^t a(u)du - \int_0^s a(u)du}ds$.
    We obtain, as $t<T<\infty$:
    \begin{equation*}
        |f_t|_\gamma \leq |f_0|_\gamma \exp(C_1 t) + \int_0^t C_2\|d\psi_s\|_\infty^\gamma \exp (C_1(t-s))ds \leq C \left( 1 + \int_0^t\|d\psi_s\|_\infty^\gamma ds \right)\,,
    \end{equation*}
    ending the proof.
\end{proof}
Differentiating the particle equation \eqref{FlowPart} and taking the $L_\infty$ norm we get:
\begin{equation*}
    \frac{d}{dt}\|d\psi_t \|_\infty \leq \|d v_t\|_\infty \|d \psi_t\|_\infty\,.
\end{equation*} 
This gives by Gronwall's lemma:
\begin{equation}\label{EstimGradFlotVit}
    \|d\psi_t \|_\infty \leq C\exp{\int_0^t \|d v_s\|_\infty ds}\,.
\end{equation}
Let us observe that $\psi_t^{-1}$ satisfies a similar bound.
\begin{lemma}
    Let $\psi_t^{-1}$ be defined as the inverse flow of $\psi_t$. Then the majoration above is true for $\psi_t^{-1}$, i.e. for some constant $C>0$:
\begin{equation}\label{EstimGradFlotInvVit}
    \|d\psi_{t}^{-1} \|_\infty \leq C\exp{\int_0^t \|d v_s\|_\infty ds}\,.
\end{equation}
\end{lemma}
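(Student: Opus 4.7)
The plan is to mirror the derivation of \eqref{EstimGradFlotVit} by writing an ODE for the inverse differential along the characteristics of $v_t$ and applying Gronwall's lemma. Set $\phi_t \coloneqq \psi_t^{-1}$, which is well defined as a $C^1$-diffeomorphism of $\mathbb{R}^d$ because the Jacobian $J_t = \det d\psi_t$ stays strictly positive on $[0,T[$: indeed, $J_t$ satisfies $\dot J_t = J_t\,(\mu_t-\nu)\!\circ\!\psi_t$ with $J_0 = 1$, and $\mu_t-\nu$ is uniformly bounded by Lemma \ref{ContrDensFt}. Differentiating the identity $\psi_t(\phi_t(x)) = x$ in $x$ and evaluating at $x = \psi_t(\alpha)$ gives the pointwise relation
\begin{equation}
    d\phi_t(\psi_t(\alpha)) = \bigl(d\psi_t(\alpha)\bigr)^{-1},
\end{equation}
so controlling $\|d\phi_t\|_\infty$ reduces to controlling the operator norm of $M(t) := (d\psi_t(\alpha))^{-1}$, uniformly in $\alpha \in \mathbb{R}^d$.

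Next, differentiating \eqref{FlowPart} in $\alpha$ yields $\tfrac{d}{dt} d\psi_t(\alpha) = dv_t(\psi_t(\alpha)) \cdot d\psi_t(\alpha)$, and the standard identity $\tfrac{d}{dt}(A^{-1}) = -A^{-1} \dot A\, A^{-1}$ for invertible matrices gives
\begin{equation}
    \dot M(t) = -M(t) \cdot dv_t(\psi_t(\alpha)), \qquad M(0) = I.
\end{equation}
Integrating and taking operator norms leads to $\|M(t)\| \leq 1 + \int_0^t \|M(s)\|\,\|dv_s\|_\infty \, ds$, and Gronwall's lemma yields $\|M(t)\| \leq \exp\bigl(\int_0^t \|dv_s\|_\infty \, ds\bigr)$. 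Since $\alpha$ is arbitrary and $\psi_t$ is surjective, taking the supremum over $\alpha$ provides the claimed estimate with constant $C = 1$.

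No essential obstacle arises: the argument is structurally identical to the one producing \eqref{EstimGradFlotVit}, and the only preliminary check, namely the invertibility of $d\psi_t$, was already dispatched in the first step via Lemma \ref{ContrDensFt}. Notice in particular that one does not want to derive the claim by differentiating the PDE $\partial_t \phi_t = -d\phi_t\cdot v_t$ directly in $x$, since this introduces the second-order term $d^2\phi_t\cdot v_t$; going through $M(t)$ on the Lagrangian side circumvents this difficulty entirely.
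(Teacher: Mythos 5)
Your proof is correct but takes a genuinely different route from the paper's. The paper argues by a time-reversal trick: fixing $s>0$, it writes $\psi_t^{-1} = \psi_{s-t} \circ \psi_s^{-1}$ via a semigroup identity, differentiates in $t$ to get $\partial_t \psi_t^{-1} = -v_{s-t}\circ\psi_t^{-1}$, and applies Gronwall before setting $s=t$. You instead stay entirely on the Lagrangian side: using $d\phi_t(\psi_t(\alpha)) = (d\psi_t(\alpha))^{-1}$ together with the matrix-inverse identity $\tfrac{d}{dt}(A^{-1}) = -A^{-1}\dot A A^{-1}$, you obtain the linear ODE $\dot M(t) = -M(t)\,dv_t(\psi_t(\alpha))$, $M(0)=I$, and Gronwall closes the argument directly, giving $C=1$. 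Your route has a concrete advantage: the flow here is non-autonomous ($v_t$ depends on $t$ through $\mu_t$), so the semigroup identity $\psi_s = \psi_t\circ\psi_{s-t}$ invoked in the paper does not literally hold — one would need a two-parameter flow $\psi_{t,s}$ — whereas your derivation never uses any semigroup structure and so is both cleaner and on firmer ground. One small caveat worth noting explicitly: the step $\sup_x \|d\phi_t(x)\| = \sup_\alpha \|(d\psi_t(\alpha))^{-1}\|$ uses that $\psi_t$ is a bijection of $\mathbb{R}^d$ onto itself, not merely a local diffeomorphism; this follows from uniqueness of ODE flows for the globally defined, locally Lipschitz field $v_t$ on the time interval of existence, and is worth a sentence. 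Otherwise the argument is complete.
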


\begin{proof}
    Let $\alpha := \psi_s(y)$ for some $y \in \mathbb{R}^d$ and $s>0$. Then if $0 \leq t<s$, we have $\alpha \in \psi_t(\mathbb{R}^d)$, as $\alpha = \psi_t \circ \psi_{s-t}(y)$ by semi-group property. We get $\psi_t^{-1}(\alpha) = \psi_{s-t}(y)= \psi_{s-t} \circ \psi_s^{-1} (\alpha) $. Differentiating both sides of the equation in time we get:
    \begin{equation*}
        \partial_t \psi_t^{-1} (\alpha) = \partial_t \psi_{s-t}\circ \psi_s^{-1} (\alpha) = -v_{s-t}\circ \psi_{s-t} \circ \psi_s^{-1}( \alpha) = -v_{s-t}\circ \psi_t^{-1}( \alpha)\,.
    \end{equation*}
    This leads to the inequality:
    \begin{equation*}
        \frac{d}{dt} \|d\psi_t^{-1}\|_\infty \leq \|d v_{s-t}\|_\infty \|d \psi_t^{-1}\|_\infty\,, 
    \end{equation*}
    so that by Gr\"onwall lemma:
    \begin{equation*}
        \|d\psi_t^{-1}\|_\infty \leq C\exp \left(\int_0^t \|d v_{s-u}\|_\infty du \right) \,.
    \end{equation*}
    We can choose $s=t$ in the previous inequality to get:
    \begin{equation}\label{InegInvFlo}
        \|d\psi_t^{-1}\|_\infty \leq C\exp \left(\int_0^t \|d v_{u}\|_\infty du \right) \,.
    \end{equation}
\end{proof}
With this result, we managed to bound from above all of our quantities by functions of $dv_t$.\\
 \noindent
    \textbf{On the derivative of our velocity field.}
    The following arguments are presented in \cite[Sections 2.4.2,3]{majda_bertozzi_2001}.
    The kernel $G_2 \coloneqq d_x \grad G$ is homogeneous of degree -N. Due to this, the singularity at the diagonal cannot be integrated. However, it has mean-value zero and defines a singular integral operator through the convolution:
    \begin{equation*}
        G_2 \star f(x) = PV \int G_2(x,y)f(y)dy \coloneqq \underset{\varepsilon \rightarrow 0}{\lim} \int_{d(x,y)>\varepsilon} G_2(x,y)f(y)dy \,.
    \end{equation*}
    Using the same arguments than in \cite[Prop 2.20]{majda_bertozzi_2001}, we know that for a velocity field defined by $\grad G \star f$ with $G$ the Coulomb kernel and $f \in \mathcal{C}^\gamma(\mathbb{R}^d;\mathbb{R}^d)$ we have:
    \begin{equation*}
        dv_t(x) = PV \int G_2(x,y)f(y)dy \,.
    \end{equation*}
    The following result, found in \cite[Lemma 4.5 and 4.6]{majda_bertozzi_2001} and \cite[Lemma 2.2]{bertozzi2012}, is expressed in terms of principal value integral in \cite{bertozzi2012} but used in the following form.

    \begin{lemma}\label{PotThEstim}
    Let $f \in \mathcal{C}^\gamma(\mathbb{R}^d;\mathbb{R}^d)$ be a compactly supported function in a ball of radius $R$. We define $v \coloneqq \nabla G \star f$. Then, for some positive constant $C$ independent of $f$ and $R$ we have:
        \begin{align*}
             \| dv \|_\infty  &\leq C \left[  |f|_\gamma \varepsilon^\gamma + \max\left(1; \log\left( \frac{R}{\varepsilon}\right)\right)\|f\|_\infty\right] ; \forall \varepsilon >0 \,,  \\
             |dv|_\gamma &\leq C |f|_\gamma  \,. 
        \end{align*}
\end{lemma}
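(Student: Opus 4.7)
The plan is to exploit the cancellation property of the kernel $G_2 = d_x \nabla G$, namely that $G_2$ is homogeneous of degree $-d$ with mean zero on spheres centered at the origin. This implies in particular that $\int_{\{r_1 \leq |z| \leq r_2\}} G_2(z)\,dz = 0$ for any $0 < r_1 < r_2$, so for any $\varepsilon > 0$ the principal value can be rewritten as an absolutely convergent integral
\begin{equation*}
dv(x) = \int_{|y-x|<\varepsilon} G_2(x-y)\bigl(f(y)-f(x)\bigr)\,dy + \int_{|y-x|\geq \varepsilon} G_2(x-y) f(y)\,dy.
\end{equation*}
This decomposition is the workhorse for both estimates.

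For the first estimate, I would bound each piece separately. In the inner region, using $|G_2(x-y)| \leq C|x-y|^{-d}$ together with $|f(y)-f(x)| \leq |f|_\gamma |y-x|^\gamma$ and polar coordinates gives a contribution of order $C|f|_\gamma \varepsilon^\gamma$. In the outer region, since $f$ is supported in a ball of radius $R$, the domain of integration is essentially contained in the annulus $\varepsilon \leq |y-x| \leq 2R$ (the case $\varepsilon > 2R$ being trivial), and the crude bound $|G_2(x-y)| \leq C|x-y|^{-d}$ yields a contribution of order $C\|f\|_\infty \log(R/\varepsilon)$. Taking the maximum with $1$ handles the regime $\varepsilon \gtrsim R$.

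For the second estimate, I would fix $x_1, x_2 \in \mathbb{R}^d$, set $h = |x_1 - x_2|$, and prove $|dv(x_1) - dv(x_2)| \leq C|f|_\gamma h^\gamma$. The natural splitting is at scale $\varepsilon = 2h$: for the near region $|y - x_1| < 2h$ (and the symmetric one around $x_2$), apply the cancellation trick above to each of $dv(x_1)$ and $dv(x_2)$ separately and bound as in the first estimate, producing a term of order $|f|_\gamma h^\gamma$. For the far region $|y - x_1| \geq 2h$, use the mean value inequality on $G_2$, namely $|G_2(x_1 - y) - G_2(x_2 - y)| \leq C\, h\, |y-x_1|^{-d-1}$, combined with the cancellation $\int G_2 = 0$ on spheres (so that $f(y)$ can be replaced by $f(y) - f(x_1)$), and integrate $|y-x_1|^{\gamma-d-1}$ over $\{|y-x_1| \geq 2h\} \cap \{|y-x_1| \lesssim R\}$. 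This gives a convergent contribution of order $|f|_\gamma h^\gamma$.

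The main technical obstacle will be handling the far region for the Hölder estimate rigorously: one needs to insert the $f(y) - f(x_1)$ cancellation at the right step so that the integral $\int_{|y-x_1|\geq 2h} h|y-x_1|^{\gamma - d - 1}\,dy$ converges at infinity; this uses $\gamma < 1$ and the compact support of $f$. Otherwise the argument is a standard adaptation of the Calderón–Zygmund-type estimates in \cite[Lemmas 4.5--4.6]{majda_bertozzi_2001} and \cite[Lemma 2.2]{bertozzi2012} to our kernel $G_2$, whose only relevant properties are homogeneity of degree $-d$, smoothness away from the origin, and mean zero on spheres.
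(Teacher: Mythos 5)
The paper does not prove this lemma; it cites it from \cite[Lemma~4.5 and 4.6]{majda_bertozzi_2001} and \cite[Lemma~2.2]{bertozzi2012}, so there is no in-paper argument to compare against. Your sketch reconstructs the standard Calder\'on--Zygmund argument from those references, and your treatment of the first ($L^\infty$) estimate is correct.

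For the H\"older estimate, however, you have misidentified where the real difficulty lies, and the step as written does not close. You propose replacing $f(y)$ by $f(y)-f(x_1)$ over the far region $\{|y-x_1|\geq 2h\}$ ``by the cancellation $\int G_2=0$ on spheres''. That replacement is indeed free for the $G_2(x_1-y)$ contribution, since $\int_{|y-x_1|\geq 2h}G_2(x_1-y)\,dy=0$. It is \emph{not} free for the $G_2(x_2-y)$ contribution: the mean-zero property holds on spheres centered at $x_2$, not $x_1$, and the quantity $c_0\coloneqq\int_{|y-x_1|\geq 2h}G_2(x_2-y)\,dy$ is a nonzero $O(1)$ quantity, giving a residual $f(x_1)\,c_0=O(\|f\|_\infty)$ that does not scale like $|f|_\gamma h^\gamma$. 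A companion mismatch appears in the near region, because the PV-cancellation for $dv(x_2)$ lives on balls around $x_2$ rather than $x_1$. The classical proof resolves this by tracking the $O(1)$ residuals: they come with coefficients $f(x_1)$ and $f(x_2)$ that cancel up to $f(x_1)-f(x_2)=O(|f|_\gamma h^\gamma)$, which restores the stated bound. That bookkeeping is the genuine technical content you would need to supply, and it is not the convergence-at-infinity issue you flag: the integral $\int_{|y-x_1|\geq 2h}h\,|y-x_1|^{\gamma-d-1}\,dy$ already converges at infinity because $\gamma-2<-1$, without any help from the compact support of $f$.
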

To use it we need to confine the support of $\mu_t$.
First, we can bound the velocity field $v_t$.
\begin{lemma}\label{BoundVitEucl}
    Let $\mu_t$ be a solution as defined in Proposition \ref{LocExistHold} and $v_t$ the associated velocity field. Then for some constant $C>0$ only depending on the dimension:
    \begin{equation*}
        \|v_t\|_\infty \leq C ( \|\mu_t - \nu \|_\infty + \|\mu_t - \nu \|_1) \,.
    \end{equation*}
\end{lemma}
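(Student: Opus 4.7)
The bound is a standard potential-theoretic estimate exploiting the fact that $|\nabla G(z)|$ is (up to a dimensional constant) equal to $\|z\|^{-(d-1)}$, which is locally integrable (since $d-1 < d$) but decays only like $\|z\|^{-(d-1)}$ at infinity, i.e.\ is bounded away from the origin. The plan is to split the convolution integral into a near-field and a far-field contribution and control each by the appropriate norm of $\mu_t - \nu$.

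More precisely, write
\begin{equation*}
v_t(x) \;=\; -\int_{\mathbb R^d} \nabla G(x-y)\,(\mu_t-\nu)(y)\,dy \;=\; I_1(x) + I_2(x),
\end{equation*}
where $I_1(x)$ is the integral over $B(x,1)$ and $I_2(x)$ is the integral over $B(x,1)^c$. For the near-field term $I_1$, bound $|\nabla G(x-y)| \leq C\,\|x-y\|^{-(d-1)}$ and extract the $L^\infty$ norm:
\begin{equation*}
|I_1(x)| \;\leq\; C\,\|\mu_t-\nu\|_\infty \int_{B(0,1)} \frac{dz}{\|z\|^{d-1}} \;=\; C'\,\|\mu_t-\nu\|_\infty,
\end{equation*}
where the radial integral $\int_0^1 r^{-(d-1)}\,r^{d-1}\,dr = 1$ provides a finite dimensional constant. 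For the far-field term $I_2$, use that on $B(x,1)^c$ one has $|\nabla G(x-y)| \leq C$, so
\begin{equation*}
|I_2(x)| \;\leq\; C \int_{B(x,1)^c} |\mu_t - \nu|(y)\,dy \;\leq\; C\,\|\mu_t-\nu\|_1.
\end{equation*}
Adding these two estimates and taking the supremum in $x$ yields the announced inequality.

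There is no serious obstacle: the only subtlety is handling the singularity of $\nabla G$ at the origin, and this is resolved by the choice of radius $1$ combined with the local integrability of $\|z\|^{-(d-1)}$ in dimension $d$. The same argument would work for any fixed radius $R>0$, producing an inequality of the form $\|v_t\|_\infty \leq C_R(\|\mu_t-\nu\|_\infty + \|\mu_t-\nu\|_1)$; the statement corresponds to $R=1$ and absorbs both universal constants into a single $C$ depending only on the dimension.
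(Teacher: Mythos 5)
Your proof is correct and follows essentially the same route as the paper: split the convolution into near-field (ball of radius $1$) and far-field parts, control the former by $\|\mu_t-\nu\|_\infty$ via the local integrability of $\|z\|^{-(d-1)}$, and the latter by $\|\mu_t-\nu\|_1$ since $|\nabla G|$ is bounded away from the origin. Your write-up is in fact slightly more careful about centering the balls at $x$ and making the two bounds on $|\nabla G|$ explicit, but the substance matches the paper's argument.
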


\begin{proof}
    We write for $x \in \mathbb{R}^d$, knowing $|\nabla G(x)|$ is proportional to $|x|^{d-1}$:
    \begin{align*}
        |v_t(x)| &\leq \left[\int_{B(0,1)} + \int_{B(0,1)^c}\right] |\nabla G(x-y)(\mu_t-\nu)(y)|dy \\
        &\leq \|\mu_t-\nu\|_\infty \int_{B(0,1)}|\nabla G(x-y)|dy + \int_{B(0,1)^c} |\mu_t - \nu|(y)dy \\
        |v_t(x)| &= C (\|\mu_t-\nu\|_\infty + \|\mu_t-\nu\|_1)\,.
    \end{align*}
\end{proof}
This allows us to bound the growth of the support.
\begin{lemma}\label{Confinement}
    Let $\mu_t$ be a solution defined as in Proposition \ref{LocExistHold}. Suppose that the support of $\mu_0$ is contained in the ball of center $0$ and radius $R_0>0$. Then there exists a positive constant $C>0$ such that the support of $\mu_t$ is contained in $R(t)\coloneqq R_0 + Ct$.
\end{lemma}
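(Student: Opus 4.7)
The plan is to show the velocity field $v_t$ is uniformly bounded in $L^\infty$ on $[0,T[$, and then conclude that particles cannot escape from $B(0,R_0)$ faster than linearly, which gives the desired propagation of the support.

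First I would control $\|\mu_t\|_\infty$ uniformly in time. By \Cref{ContrDensFt}, the Lagrangian density $f_t = \mu_t \circ \psi_t$ satisfies
\begin{equation}
\|f_t\|_\infty \leq \max(\|\mu_0\|_\infty,\|\nu\|_\infty)\,,
\end{equation}
and since $\psi_t$ is a bijection (as $F$ is Lipschitz), $\|\mu_t\|_\infty = \|f_t\circ\psi_t^{-1}\|_\infty = \|f_t\|_\infty$, so $\|\mu_t\|_\infty$ is bounded independently of $t \in [0,T[$. Together with $\|\nu\|_\infty<\infty$ (since $\nu$ is a H\"older compactly supported density), this controls $\|\mu_t-\nu\|_\infty$. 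Since $\mu_t$ and $\nu$ are probability measures, one also has $\|\mu_t-\nu\|_1 \leq 2$.

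Next I would apply \Cref{BoundVitEucl}, which yields a constant $C>0$ (depending only on the dimension, $\|\mu_0\|_\infty$, $\|\nu\|_\infty$) such that $\|v_t\|_\infty \leq C$ for all $t \in [0,T[$. From the particle flow equation \eqref{FlowPart}, for every $\alpha \in \RR^d$,
\begin{equation}
|\psi_t(\alpha)-\alpha| \leq \int_0^t \|v_s\|_\infty\,ds \leq Ct\,.
\end{equation}

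Finally, since equation \eqref{ChgtVar} gives $\mu_t = (\psi_t)_{\#}\mu_0$, the support satisfies $\supp(\mu_t) = \psi_t(\supp(\mu_0))$. If $\alpha \in \supp(\mu_0) \subset B(0,R_0)$, then $|\psi_t(\alpha)| \leq |\alpha| + Ct \leq R_0 + Ct$, so $\supp(\mu_t) \subset B(0,R_0+Ct) = B(0,R(t))$, as claimed. No step is genuinely hard here: the only point requiring care is that the constant $C$ in the velocity bound must be independent of $t$, which is guaranteed precisely because \Cref{ContrDensFt} produces a time-uniform $L^\infty$ bound on $f_t$.
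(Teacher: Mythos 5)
Your proof is correct and follows essentially the same route as the paper: obtain a time-uniform bound on $\|\mu_t-\nu\|_\infty$ from Lemma \ref{ContrDensFt}, combine it with Lemma \ref{BoundVitEucl} to get a uniform $L^\infty$ bound on $v_t$, and integrate the particle flow to control the support. The paper's proof is just more terse, leaving the final integration of \eqref{FlowPart} implicit.
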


\begin{proof}
    We use the inequality from Lemma \ref{BoundVitEucl}.
    As $\|\mu_t\|_\infty = \|f_t\|_\infty \leq C$ by Lemma \ref{ContrDensFt}, the first term is uniformly bounded in $[0,T[$. As $\mu_t$ and $\nu$ are probability densities, so is the second term. This proves the result.
\end{proof}

Having proved that $\mu_t$ has bounded support, we can apply the first estimate from Lemma \ref{PotThEstim} with $\varepsilon = \left[ \|\mu_t - \nu\|_\infty / |\mu_t - \nu|_\gamma \right]^{1/\gamma}$ to establish the existence of $C$ independent of $t$ such that:
\begin{equation*}
    \|dv_t\|_\infty \leq C \left[ \|\mu_t - \nu\|_\infty + \max \left(1, \log \left(\frac{R(t)|\mu_t - \nu|^{1/\gamma}_\gamma}{\|\mu_t - \nu\|^{1/\gamma}_\infty} \right)\|\mu_t - \nu\|_\infty \right)  \right].
\end{equation*}
Thanks to Lemma \ref{ContrDensFt}, $\|\mu_t - \nu\|_\infty$ is bounded and by Lemma \ref{Confinement}, as $T<\infty$ we get the existence of $C_1,C_2 >0$ such that:
\begin{equation}\label{GradVitNormHoldMu}
    \|dv_t\|_\infty \leq C_1 + C_2 \log(|\mu_t - \nu|_\gamma).
\end{equation}
Now, we are ready to prove our first real boundedness result.
\begin{proposition}
    The quantity $\|dv_t \|_\infty$ is uniformly bounded on the time interval $[0,T[$.
\end{proposition}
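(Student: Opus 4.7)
The plan is to close a Gronwall loop using the logarithmic estimate on $\|dv_t\|_\infty$ from \eqref{GradVitNormHoldMu}, which is precisely what makes the a priori argument work: a polynomial (or even exponential) dependence would blow up, but the logarithm is tame enough to allow a Gronwall-type absorption.

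First, I would introduce the auxiliary quantity
\begin{equation}
L(t) \coloneqq \int_0^t \|dv_s\|_\infty\,ds,
\end{equation}
and rewrite the previously established estimates in terms of $L(t)$. From \eqref{EstimGradFlotVit} and \eqref{EstimGradFlotInvVit} we have $\|d\psi_t\|_\infty + \|d\psi_t^{-1}\|_\infty \leq C e^{L(t)}$. Feeding this into \Cref{HoldNormMutVraieRes}, and using that $s\mapsto L(s)$ is nondecreasing together with $t<T<\infty$, yields
\begin{equation}
|\mu_t|_\gamma \;\leq\; C\,e^{\gamma L(t)} \int_0^t \bigl(1 + C\,e^{\gamma L(s)}\bigr)\,ds \;\leq\; C_T\,e^{2\gamma L(t)}
\end{equation}
for some constant $C_T$ depending only on $T$, $\mu_0$, $\nu$ and the dimension. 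Consequently $|\mu_t - \nu|_\gamma \leq C_T\,e^{2\gamma L(t)} + |\nu|_\gamma$.

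Next, I would plug this into \eqref{GradVitNormHoldMu}. This is where the logarithm does its work: taking $\log$ of the above bound gives
\begin{equation}
\log\bigl(|\mu_t - \nu|_\gamma\bigr) \;\leq\; A + 2\gamma\, L(t),
\end{equation}
for some constant $A$, and therefore
\begin{equation}
\|dv_t\|_\infty \;\leq\; C_1 + C_2\bigl(A + 2\gamma L(t)\bigr) \;=\; \alpha + \beta L(t),
\end{equation}
with $\alpha,\beta>0$ independent of $t\in[0,T[$. Integrating in time gives the closed inequality $L(t) \leq \alpha t + \beta \int_0^t L(s)\,ds$, and Gronwall's lemma yields $L(t) \leq \alpha T\,e^{\beta T}$ uniformly on $[0,T[$.

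Substituting this uniform bound on $L(t)$ back into the estimate for $\|dv_t\|_\infty$ gives the desired uniform control on $[0,T[$. The main obstacle, and really the only delicate point, is ensuring that the exponent $\gamma L(t)$ appearing in $|\mu_t|_\gamma$ is absorbed by the $\log$ in \Cref{PotThEstim}; once the integrability of $\nabla G$ near the diagonal has been traded for the logarithm via the optimal choice of $\varepsilon$ (already carried out before \eqref{GradVitNormHoldMu}), the Gronwall closure is essentially forced. Note that the assumption $T<\infty$ is used here only to make intermediate constants finite; the bound obtained grows at most like $e^{\beta T}$, which is exactly the kind of control that will subsequently rule out blow-up and hence force $T=+\infty$.
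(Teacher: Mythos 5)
Your argument is correct and follows essentially the same route as the paper: both feed the exponential bounds \eqref{EstimGradFlotVit}, \eqref{EstimGradFlotInvVit} and the H\"older estimate of \Cref{HoldNormMutVraieRes} into the logarithmic bound \eqref{GradVitNormHoldMu}, observe that the logarithm linearizes the exponential dependence, and close with Gr\"onwall. The only cosmetic difference is that you run Gr\"onwall on $L(t)=\int_0^t\|dv_s\|_\infty\,ds$ rather than directly on $\|dv_t\|_\infty$, which incidentally avoids the harmless $\log t$ term that appears in the paper's final differential inequality.
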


\begin{proof}
Injecting the inequality from Lemma \ref{HoldNormMutVraieRes} into equation \eqref{GradVitNormHoldMu}, we get the existence of constants such that:
\begin{equation*}
    \|dv_t \|_\infty \leq A + B \log(\|d \psi_t\|_\infty) + C \log \left(\int_0^t (1 + \|d \psi_{s}^{-1}\|_\infty^\gamma)ds \right) \,.
\end{equation*}
We write, using inequality \eqref{EstimGradFlotInvVit}, the fact that $s\leq t$ in the integrals and that $\|dv_t\|_\infty$ is a positive function:
\begin{align*}
    \log \left(\int_0^t  \|d \psi_{s}^{-1}\|_\infty^\gamma ds \right) &\leq C\log \left(\int_0^t \gamma \exp \left( \int_0^s \|dv_u \|_\infty du  \right)ds \right) \\
    &\leq C\log \left(\int_0^t \gamma \exp \left( \int_0^t \|dv_u \|_\infty du  \right)ds \right) \\
    &\leq C \log \left( \gamma t \exp \left( \int_0^t \|dv_u \|_\infty du  \right)\right) \\
    \log \left(\int_0^t  \|d \psi_{s}^{-1}\|_\infty^\gamma ds \right) &\leq C \left( \log t + \int_0^t \|dv_u \|_\infty du \right) \,,
\end{align*}
and we obtain the final differential inequality:
\begin{equation*}
    \|dv_t \|_\infty \leq C \left( 1 + \int_0^t \|dv_s \|_\infty ds +\log(t)  \right) \,.
\end{equation*}
Once again, Gronwall's lemma applies to $t \mapsto \|dv_t \|_\infty$ and we get the existence of constants $C_1,C_2$ such that
$
    \|dv_t \|_\infty \leq C_1 \exp(C_2t)\,,
$
showing the result as $t<T<+\infty$.
\end{proof}
This immediately implies:
\begin{proposition}
    The quantities $\|d\psi_t\|_\infty$, $\|d\psi_t^{-1}\|_\infty$ and $|\mu_t|_\gamma$ are uniformly bounded in time on $[0,T[$.
\end{proposition}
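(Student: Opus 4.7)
The plan is to exploit the fact that the previous proposition provides a uniform bound $\|dv_t\|_\infty \leq C_1 \exp(C_2 T) =: K$ on the whole interval $[0,T[$, so that all the Gr\"onwall-type estimates already derived reduce to pure corollaries with no further analytic work needed. The entire statement follows by chaining the three inequalities \eqref{EstimGradFlotVit}, \eqref{EstimGradFlotInvVit}, and the one in Lemma \ref{HoldNormMutVraieRes}, in that order.

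First I would control $\|d\psi_t\|_\infty$. By \eqref{EstimGradFlotVit} one has
\begin{equation}
\|d\psi_t\|_\infty \leq C \exp\!\left(\int_0^t \|dv_s\|_\infty\,ds\right) \leq C\exp(KT),
\end{equation}
and an identical argument applied to \eqref{EstimGradFlotInvVit} yields the same type of bound for $\|d\psi_t^{-1}\|_\infty$. Both quantities are therefore uniformly bounded on $[0,T[$ by a constant depending only on $T$ and $K$.

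Having these two bounds, the control of $|\mu_t|_\gamma$ is immediate from Lemma \ref{HoldNormMutVraieRes}. Indeed, denoting the uniform bounds $A := \sup_{t\in[0,T[}\|d\psi_t\|_\infty$ and $B := \sup_{t\in[0,T[}\|d\psi_t^{-1}\|_\infty$, we have
\begin{equation}
|\mu_t|_\gamma \leq C\, A^\gamma \int_0^t (1 + B^\gamma)\,ds \leq C A^\gamma (1 + B^\gamma)\, T,
\end{equation}
which is a finite constant independent of $t \in [0,T[$. I do not expect any obstacle here: the proof is essentially a bookkeeping exercise that collects the intermediate a priori estimates already established. This bound, combined with the blow-up alternative from Proposition \ref{LocExistHold}, will then immediately preclude $T<\infty$ and thereby complete the proof of the global existence theorem \ref{GlobExistHolder}.
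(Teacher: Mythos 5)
Your proof is correct and takes essentially the same route as the paper: invoke the uniform bound on $\|dv_t\|_\infty$ from the preceding proposition, feed it into \eqref{EstimGradFlotVit} and \eqref{EstimGradFlotInvVit} to bound $\|d\psi_t\|_\infty$ and $\|d\psi_t^{-1}\|_\infty$, and then conclude the bound on $|\mu_t|_\gamma$ from Lemma \ref{HoldNormMutVraieRes}. The paper states this in a single sentence; you have simply spelled out the intermediate constants.
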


\begin{proof}
    Using the fact that $\|dv_t \|_\infty$ is uniformly bounded in $[0,T[$, inequalities \eqref{EstimGradFlotVit}, \eqref{EstimGradFlotInvVit} and Lemma \ref{HoldNormMutVraieRes} show the result.
\end{proof}
Finally, we can control our last term:
\begin{proposition}
    The quantity $|d\psi_t|_\gamma$ is uniformly bounded in $[0,T[$.
\end{proposition}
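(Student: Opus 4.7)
The plan is to differentiate the particle flow equation \eqref{FlowPart} spatially and derive a linear differential inequality for $|d\psi_t|_\gamma$, then close it by Grönwall, using all the uniform bounds already obtained in the previous propositions.

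Differentiating \eqref{FlowPart} in the spatial variable gives
\begin{equation}
    \frac{d}{dt} d\psi_t(\alpha) = dv_t(\psi_t(\alpha))\, d\psi_t(\alpha)\,.
\end{equation}
For two points $\alpha,\beta \in \mathbb{R}^d$, I would split the increment in the standard telescoping way,
\begin{equation}
    dv_t(\psi_t(\alpha))d\psi_t(\alpha) - dv_t(\psi_t(\beta))d\psi_t(\beta) = \bigl[dv_t(\psi_t(\alpha)) - dv_t(\psi_t(\beta))\bigr]d\psi_t(\alpha) + dv_t(\psi_t(\beta))\bigl[d\psi_t(\alpha)-d\psi_t(\beta)\bigr]\,.
\end{equation}
Dividing by $|\alpha-\beta|^\gamma$ and using $|\psi_t(\alpha)-\psi_t(\beta)|\leq \|d\psi_t\|_\infty|\alpha-\beta|$, the first term is bounded pointwise by $|dv_t|_\gamma \|d\psi_t\|_\infty^{\gamma+1}$, and the second by $\|dv_t\|_\infty\, |d\psi_t|_\gamma$, so that
\begin{equation}
    \frac{d}{dt}|d\psi_t|_\gamma \leq \|dv_t\|_\infty\, |d\psi_t|_\gamma + |dv_t|_\gamma\, \|d\psi_t\|_\infty^{\gamma+1}\,.
\end{equation}

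Now I would invoke the second estimate of \cref{PotThEstim}, which gives $|dv_t|_\gamma \leq C|\mu_t-\nu|_\gamma$. From the previous proposition, $|\mu_t|_\gamma$ (hence $|\mu_t-\nu|_\gamma$, since $\nu$ is H\"older) and $\|d\psi_t\|_\infty$ are uniformly bounded on $[0,T[$, and $\|dv_t\|_\infty$ was shown to be uniformly bounded too. Thus there exist constants $A,B$ independent of $t\in[0,T[$ such that
\begin{equation}
    \frac{d}{dt}|d\psi_t|_\gamma \leq A\, |d\psi_t|_\gamma + B\,.
\end{equation}
Applying Grönwall's lemma on $[0,T[$ yields $|d\psi_t|_\gamma \leq (|d\psi_0|_\gamma + B/A)e^{AT}$, and since $|d\psi_0|_\gamma = 0$ (as $\psi_0 = \operatorname{Id}$), the quantity is uniformly bounded on $[0,T[$.

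There is no genuine obstacle here: all the hard work (bounds on $\|f_t\|_\infty$, confinement of the support, bound on $\|dv_t\|_\infty$, bounds on $\|d\psi_t\|_\infty$ and $|\mu_t|_\gamma$) has been done in the preceding lemmas and propositions; this final step is a bookkeeping estimate whose only mild subtlety is to expand the product correctly and to remember that the H\"older seminorm of a composition picks up a factor $\|d\psi_t\|_\infty^\gamma$. Combined with the previous propositions, this shows that $\|\psi_t\|_{1,\gamma}$ stays bounded on $[0,T[$, contradicting the blow-up alternative of \cref{LocExistHold} and hence proving $T=+\infty$, which is the content of \cref{GlobExistHolder}.
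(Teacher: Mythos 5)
Your proof is correct and follows essentially the same route as the paper: differentiate the particle flow equation spatially, apply the product rule for the H\"older seminorm (picking up the factor $\|d\psi_t\|_\infty^{1+\gamma}$ from the composition), invoke the second Schauder-type estimate of \cref{PotThEstim} to bound $|dv_t|_\gamma$ by $|\mu_t-\nu|_\gamma$, feed in the uniform bounds on $\|dv_t\|_\infty$, $\|d\psi_t\|_\infty$ and $|\mu_t|_\gamma$ from the preceding propositions, and close with Gr\"onwall. Your version is in fact slightly cleaner in bookkeeping: you keep the coefficients as already-established constants rather than re-expanding the exponential bound on $\|d\psi_t\|_\infty$, and your telescoping decomposition makes the origin of the two terms in the differential inequality transparent (the paper's displayed first line appears to carry a small typographical slip, writing $|d(v_t\circ\psi_t)|_\gamma$ where $|dv_t\circ\psi_t|_\gamma$ is intended).
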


\begin{proof}
Differentiating the particle equation \eqref{FlowPart}, taking the Hölder $|\cdot|_\gamma$ semi-norm and applying the preceding proposition along with the second potential theory estimate from Lemma \ref{PotThEstim} we get: 

\begin{align}\label{EstimGradnHoldXt}
    \frac{d}{dt} |d\psi_t|_\gamma &\leq |d (v_t\circ \psi_t)|_\gamma \|d\psi_t\|_\infty + \|d (v_t\circ \psi_t)\|_\infty |d\psi_t|_\gamma \nonumber \\
    & \leq |dv_t|_\gamma \|d\psi_t\|^{1+\gamma}_\infty + \|d v_t\|_\infty  \|d\psi_t\|_\infty |d\psi_t|_\gamma \nonumber \\
    & \leq C_1|\mu_t|_\gamma \exp{\left((1+\gamma) \int_0^t \|d v_s\|_\infty ds \right)} + C_2 |d\psi_t|_\gamma \\
    \frac{d}{dt} |d\psi_t|_\gamma &\leq C_1 + C_2|d\psi_t|_\gamma \nonumber \,.
\end{align}

One last application of Gronwall's lemma ends the proof.

\end{proof}
We just proved that if $T< \infty$ and a solution of problem \eqref{LagFlow} in the Banach space $\mathcal{B}$ exists in $[0,T[$, then the Banach norm $\|\psi_t\|_{1,\gamma}$ is uniformly bounded on $[0,T[$. If the maximal time of existence $T$ satisfies $T<\infty$, we get a contradiction with the existence result of Proposition \ref{LocExistHold}, which states that if $T<\infty$ a finite time blowup of $\|\psi_t\|_{1,\gamma}$ occurs. This proves Theorem \ref{GlobExistHolder}.

\subsubsection{On a Compact Riemannian Manifold}

This Lagrangian formulation allows us to directly extend our result to a complete closed Riemannian manifold $(M,g)$. Indeed, taking $G$ the Coulomb kernel on $(M,g)$, the Lagrangian formulation \eqref{LagFlow} still holds. As Hölder regularity is a local property, the functional $F$ is still locally Lipschitz on the Banach space:
\begin{equation*}
    \mathcal{B}_M := \left\{ \psi : M \rightarrow TM \, |\, \|\psi\|_{1,\gamma} < \infty \right\} \,,
\end{equation*}
where $|\psi(0)| + \|d \psi \|_\infty + |d\psi |_\gamma$ with $\|\cdot\|_{\mathcal{C}_M^{0,\gamma}}$ the H\"older semi-norm on the manifold $M$, defined by:
    \begin{equation*}
        \|f\|_{\mathcal{C}_M^{0,\gamma}} \coloneqq \underset{x,y \in M}{\sup}\, \frac{|f(x)-f(y)|}{d_M(x,y)^\gamma} \,.
    \end{equation*}
    for scalar functions, and through parallel transport for tensors.

We get the existence of a flow $\psi_t$, at least locally. The rest of the proof is similar, the main difference being the equivalent of Lemma \ref{PotThEstim} on closed manifolds. 

    \begin{lemma}\label{PotThEstimMani}
    Let $u \in \mathcal{C}^\gamma(M)$ on a closed manifold $M$.
    Consider the equation:
    \begin{equation*}
        \Delta_M \varphi = u \,.
    \end{equation*}
    where $u$ is H\"older continuous. Then, for some positive constant $A,C$ (independent of $u$) and for all $\varepsilon >0$ we have the Schauder estimates
        \begin{align*}
        \| \varphi \|_{2,\infty} &\leq C \left( \| u \|_{\mathcal{C}_M^{0,\gamma}} \varepsilon^\gamma + \log\left(\frac{A}{\varepsilon}\right)\|u\|_\infty \right) \,, \\
        \| \varphi \|_{\mathcal{C}_M^{2,\gamma}} &\leq C \left( \|u\|_{\mathcal{C}_M^{0,\gamma}} + \|\varphi\|_{\infty} \right) \,.
        \end{align*}

    \end{lemma}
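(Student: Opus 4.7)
The statement is the manifold analogue of Lemma \ref{PotThEstim} and should be proved by the same singular-integral/Schauder strategy, now adapted to the Riemannian setting via the known behavior of the Green's function $G$ near the diagonal. Recall that on a closed manifold $(M,g)$ the Green's function of $\Delta_M$ satisfies the parametrix expansion $G(x,y) = c_d\, d_M(x,y)^{-(d-2)} + R(x,y)$ (with a logarithmic leading term if $d=2$), where $R$ is bounded and $C^\infty$-regular off a neighborhood of the diagonal; in particular, $|\nabla_x G(x,y)| \lesssim d_M(x,y)^{1-d}$ and $|\nabla_x^2 G(x,y)| \lesssim d_M(x,y)^{-d}$ uniformly on $M\times M\setminus \mathcal D$. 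Since $\varphi$ solves $\Delta_M\varphi = u$ (with the compatibility $\int u\, d\pi = 0$, or else we work modulo constants), we can represent $\varphi = G\star u$ and differentiate under the integral, interpreting the second derivatives as a principal value integral against the Calderón--Zygmund-type kernel $K(x,y) := \nabla_x^2 G(x,y)$.

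\textbf{First inequality.} Fix $x\in M$ and $\varepsilon>0$. Using normal coordinates at $x$, split
\[
\nabla^2 \varphi(x) = \mathrm{PV}\!\!\int_{B(x,\varepsilon)} K(x,y)\bigl(u(y)-u(x)\bigr)\,d\pi(y) + \int_{M\setminus B(x,\varepsilon)} K(x,y)u(y)\,d\pi(y) + (\text{lower order}),
\]
where the lower-order term comes from the cancellation of $u(x)$ against the mean-zero property of the principal value on small geodesic balls (a fact one verifies in normal coordinates, where $K$ agrees with the Euclidean Hessian of $c_d|x-y|^{-(d-2)}$ up to a bounded error). On the ball $B(x,\varepsilon)$ the Hölder bound $|u(y)-u(x)|\le \|u\|_{\mathcal C^{0,\gamma}}d_M(x,y)^\gamma$ combined with $|K(x,y)|\lesssim d_M(x,y)^{-d}$ gives a contribution bounded by $C\|u\|_{\mathcal C^{0,\gamma}}\varepsilon^\gamma$. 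On the complement, since $M$ is compact with diameter $\le A$ for some geometric constant, $|K(x,y)|\lesssim d_M(x,y)^{-d}$ yields
\[
\int_{M\setminus B(x,\varepsilon)}|K(x,y)|\,d\pi(y) \le C\int_\varepsilon^A r^{-1}\,dr = C\log(A/\varepsilon),
\]
which controls the second term by $C\log(A/\varepsilon)\|u\|_\infty$. Assembling the two pieces yields the first estimate.

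\textbf{Second inequality.} This is a standard interior Schauder estimate localized in charts. Cover $M$ by finitely many coordinate balls in which $\Delta_M$ has the form $a^{ij}(x)\partial_i\partial_j + b^i(x)\partial_i$ with $C^\infty$, uniformly elliptic coefficients. In each chart, the classical interior Schauder estimate (e.g.\ Gilbarg--Trudinger, Thm.~6.2) gives
\[
\|\varphi\|_{C^{2,\gamma}(B')}\le C\bigl(\|\Delta_M\varphi\|_{C^{0,\gamma}(B)} + \|\varphi\|_{L^\infty(B)}\bigr)
\]
for concentric balls $B'\Subset B$. Summing over a finite subcover, recalling that Hölder norms on $M$ are controlled (up to constants) by their local chart versions through parallel transport, and that $\Delta_M\varphi = u$, gives the global estimate $\|\varphi\|_{\mathcal C_M^{2,\gamma}}\le C(\|u\|_{\mathcal C_M^{0,\gamma}} + \|\varphi\|_\infty)$, as claimed.

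\textbf{Main obstacle.} The only delicate point is the first inequality: one must justify the principal-value cancellation of the Hessian kernel on small geodesic balls, which is a Euclidean fact transported to the manifold via normal coordinates. The difference between the exact kernel $\nabla_x^2 G(x,y)$ and its Euclidean model $\nabla^2(c_d|x-y|^{-(d-2)})$ in normal coordinates is of lower order (at worst $d_M(x,y)^{-(d-1)}$), hence integrable, and contributes only to the constants. Once this is handled, the Hölder/$L^\infty$ decomposition proceeds exactly as in the Euclidean proof of Lemma \ref{PotThEstim}.
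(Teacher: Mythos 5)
Your proposal is correct and follows essentially the same route as the paper: both obtain the first estimate by splitting the singular-integral representation of $\nabla^2\varphi$ into a near-diagonal piece handled via the mean-zero/principal-value cancellation of the Hessian kernel and the $\gamma$-Hölder oscillation of $u$ (yielding $C\|u\|_{\mathcal C^{0,\gamma}}\varepsilon^\gamma$), and a far piece handled via $\|u\|_\infty$ and the $O(d_M(x,y)^{-d})$ kernel bound (yielding $C\log(A/\varepsilon)\|u\|_\infty$). The second estimate is likewise obtained in both cases by localizing in charts, invoking classical interior Schauder estimates for the Laplace--Beltrami operator in normal coordinates, and using the equivalence between the manifold and Euclidean H\"older norms on small balls; the only cosmetic difference is that you cite Gilbarg--Trudinger where the paper cites Aubin.
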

    \begin{proof}[Proof of the first inequality]
        The proof of the first inequality is almost the same as in the Euclidean space. We denote $G_2$ as the differential (in coordinates) of the gradient of $G$, i.e. $G_2 \coloneqq d_x \grad G$. Its singularity on the diagonal behaves like the Coulomb kernel, implying \cite[Theorem 4.13.c]{aubun1998}:
        \begin{equation*}
            G_2(x,y) = O(1/d_M(x,y)^{d})\,.
        \end{equation*}
        Moreover, in Riemannian manifolds:
        \begin{equation*}
            \int_{d_M(x,y)>\varepsilon} d_M(x,y)^{-d}dy = O(-\log \varepsilon) \, ,
        \end{equation*}
        and if $\alpha <d$:
        \begin{equation*}
            \int_{d_M(x,y)<\varepsilon} d_M(x,y)^{-\alpha}dy = O(\varepsilon^{d-\alpha}  )\, .
        \end{equation*}
        We write:
        \begin{equation*}
            \| \varphi \|_{2,\infty} = \left( PV \int_{d(x,y)\leq\varepsilon} + \int_{d(x,y)>\varepsilon} \right) G_2(x,y)f(y)dy = I_1(x)+I_2(x) \,.
        \end{equation*}
        As $\int_B G_2(x,y)dy = 0$ on metric balls, we get:
        \begin{align*}
            |I_1(x)| &= \left|\int_{d(x,y)\leq\varepsilon} G_2(x,y)(f(y)-f(x))dy \right| \\
            &\leq \int_{d(x,y)\leq\varepsilon} \|G_2(x,y)\| |f\|_{\mathcal{C}_M^{0,\gamma}} d(x,y)^\gamma dy \\
            &\leq C |f\|_{\mathcal{C}_M^{0,\gamma}} \int_{d(x,y)\leq\varepsilon} d_M(x,y)^{-d+\gamma}dy \\
            |I_1(x)| &\leq C |f\|_{\mathcal{C}_M^{0,\gamma}} \varepsilon^\gamma\,.
        \end{align*}
        For the second term:
        \begin{align*}
            |I_2(x)| &= \int_{d(x,y)>\varepsilon} G_2(x,y)f(y)dy \\
            &\leq C \|f\|_\infty \int_{d(x,y)>\varepsilon} d(x,y)^{-d}dy \\
            |I_2(x)| &\leq C \|f\|_\infty \log\left(\frac{A}{\varepsilon}\right) \,. 
        \end{align*}
        This proves the first estimate.
            \end{proof}
        \begin{proof}[Sketch of proof for the second inequality]
        The second estimate is proven, for a manifold embedded in the Euclidean space, with standard Schauder theory in $\mathbb{R}^d$ \cite[3.61]{aubun1998}. We use local normal coordinates for the Laplace-Beltrami operator. The bound in the Euclidean space can be applied thanks to the following metric control of the H\"older norm. We refer to the lemma below.
        \end{proof}
    \begin{lemma}
        There exists $r>0$ and $C>0$ such that, if $\Omega \subset M$ has diameter less than $r$, then if $u \in \mathcal{C}^{2,\gamma}$:
        \begin{equation*}
            \frac{1}{C} \|u\|_{\mathcal{C}_{Eucl}^{k,\gamma,\Omega}} \leq \|u\|_{\mathcal{C}_M^{k,\gamma}} \leq C \|u\|_{\mathcal{C}_{Eucl}^{k,\gamma,\Omega}}\,.
        \end{equation*}
        In this inequality, $\|u\|_{\mathcal{C}^{k,\gamma,\Omega}_{Eucl}}$ denotes the Euclidean H\"older norm in $\Omega$ when considered as a subset of $\mathbb{R}^d$.
        
    \end{lemma}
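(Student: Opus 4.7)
The plan is to work locally in normal coordinate charts and exploit the compactness of $M$ to obtain uniform constants. First I would invoke the existence of a uniform injectivity radius $r_0 > 0$ on the closed manifold $M$, so that for every $p \in M$ the exponential map $\exp_p : B(0, r_0) \subset T_pM \to M$ is a diffeomorphism onto its image. Choosing $r < r_0/2$, any $\Omega \subset M$ of diameter less than $r$ is contained in a single normal coordinate chart centred at any of its points, and by compactness a finite such atlas suffices; it then suffices to prove the two-sided bound uniformly inside one such chart.

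In normal coordinates around $p$ the metric expands as $g_{ij}(x) = \delta_{ij} + O(|x|^2)$, the inverse metric satisfies an analogous expansion, and the Christoffel symbols obey $\Gamma^k_{ij}(x) = O(|x|)$ together with uniform $C^\infty$-bounds on their derivatives (the bound depending only on $M$). Standard Riemannian comparison then yields constants $c_1, c_2 > 0$, uniform in $p$, such that
\begin{equation}
c_1 \, |x - y| \;\leq\; d_M(x, y) \;\leq\; c_2 \, |x - y|
\end{equation}
for all $x, y$ in the chart. Raising to the power $\gamma$ immediately gives the comparison of the $\gamma$-Hölder seminorms at the $C^0$ level, since the ratio $|u(x) - u(y)| / d_M(x, y)^\gamma$ is pinched between $c_2^{-\gamma}$ and $c_1^{-\gamma}$ times the Euclidean analogue.

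For higher orders I would compare, inductively on $j \leq k$, the covariant derivatives $\nabla^j u$ with the Euclidean partial derivatives $\partial^\alpha u$, $|\alpha| \leq j$. The formula $\nabla^2 u = \partial^2 u - \Gamma^k_{ij} \partial_k u$ and its iterates express each $\nabla^j u$ as a finite linear combination of the $\partial^\alpha u$ with coefficients that are polynomials in the Christoffel symbols and their derivatives; conversely the $\partial^\alpha u$ are recovered from the $\nabla^j u$ by an analogous triangular system. Since the coefficients lie in $C^\infty$ with uniform bounds on the chart, the $C^{k,\gamma}$-norms at the level of scalars are equivalent, with a constant independent of $p$.

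The main obstacle is the parallel-transport convention used in the definition of $\|\cdot\|_{\mathcal{C}^{k,\gamma}_M}$ for tensors: for $x, y$ close, one must compare the parallel transport $P_{y \to x}$ along the geodesic with the identity in the ambient Euclidean coordinates. Integrating the ODE $\dot{P} + \Gamma(\dot\sigma) P = 0$ along a minimising geodesic $\sigma$ of length at most $r$, the bound $|\Gamma| = O(|x|)$ yields $\|P_{y \to x} - \mathrm{Id}\|_{\mathrm{op}} = O(d_M(x,y))$, which is negligible against the $d_M(x,y)^\gamma$ denominator for $\gamma \in (0,1)$ and hence contributes only a multiplicative constant to the Hölder seminorm of each tensor factor. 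Combining this with the scalar comparison above yields the claimed two-sided estimate, with $C$ uniform over $M$ by the finiteness of the atlas.
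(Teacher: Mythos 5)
The paper states this lemma without proof, treating it as a standard consequence of working in normal coordinates; your proof is therefore not competing with a given argument but supplying the missing one, and it does so correctly along the standard route (uniform injectivity radius from compactness, the expansion $g_{ij}=\delta_{ij}+O(|x|^2)$ and $\Gamma^k_{ij}=O(|x|)$ in normal coordinates, the resulting two-sided metric comparison, the triangular relation between covariant and partial derivatives, and the ODE estimate for parallel transport). One small imprecision worth fixing: the bound $\|P_{y\to x}-\mathrm{Id}\|_{\mathrm{op}}=O(d_M(x,y))$ does not give a multiplicative constant on the H\"older \emph{seminorm}; writing $P_{y\to x}T(y)-T(x) = (P_{y\to x}-\mathrm{Id})T(y) + (T(y)-T(x))$, the first term contributes an additive $O\bigl(d_M(x,y)^{1-\gamma}\bigr)\,\|T\|_{\infty}$ to the H\"older quotient. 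Since $d_M(x,y)^{1-\gamma}\le r^{1-\gamma}$ on $\Omega$ and the lemma concerns the full $\mathcal{C}^{k,\gamma}$ norm, which controls $\|T\|_\infty$, this additive term is absorbed and the stated two-sided equivalence still holds; you should just phrase the conclusion at the level of the full norm rather than the seminorm.
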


Now, we use these estimates to conclude the proof of global existence. In our case $\| \psi_t \|_{2,\infty} = \| dv_t \|_{0,\infty}$ and $\| \psi_t \|_{\mathcal{C}_M^{2,\gamma}} = \| dv_t \|_{\mathcal{C}_M^{0,\gamma}} $.
    Note that in the second inequality, since the manifold $M$ is closed we have $\|\psi\|_\infty \leq C$ for some positive constant since $\nabla \psi$ is bounded in $L^\infty$.
    We can use the first estimate from Lemma \ref{PotThEstimMani} with, once again, $\varepsilon = \left[ \|\mu_t - \nu\|_\infty / |\mu_t - \nu|_\gamma \right]^{1/\gamma}$ to obtain formula \eqref{GradVitNormHoldMu} on the manifold $M$. This allows to bound $\|dv_t\|_\infty$ in the same way as in the Euclidean space. 
    To bound  $|dv_t|_\gamma$, we use the second inequality of the lemma. The principle of the proof is the same as in the Euclidean space and equation \eqref{EstimGradnHoldXt} becomes:
    \begin{align}
    \label{EstimGradnHoldXtManifold} 
    \frac{d}{dt} |d\psi_t|_\gamma &\leq |d (v_t\circ \psi_t)|_\gamma \|d\psi_t\|_\infty + \|d (v_t\circ \psi_t)\|_\infty |d\psi_t|_\gamma \nonumber\\
    & \leq |dv_t|_\gamma \|d\psi_t\|^{1+\gamma}_\infty + \|d v_t\|_\infty  \|d\psi_t\|_\infty |d\psi_t|_\gamma \nonumber\\
    & \leq C_1\left(|\mu_t|_\gamma + \|v_t\|_\infty \right) \exp{\left((1+\gamma) \int_0^t \|d v_s\|_\infty ds \right)} + C_2 |d\psi_t|_\gamma \\
    \frac{d}{dt} |d\psi_t|_\gamma &\leq C_1 + C_2|d\psi_t|_\gamma \nonumber \,.
\end{align}
The rest of the proof follows the same steps as in the Euclidean space.
This proves:
\begin{theorem}[Global convergence for Hölder initial and target data]\label{ThGlobalConvergenceFinal}
    Let $\mu_0$ and $\nu_0$ be Hölder continuous probability densities on a closed manifold $(M,g)$. Consider the curve $\mu_t$ global solution of equation \eqref{EqHold}. We have just proved that it is defined and Hölder continuous at all times.
    Then 
    \begin{equation*}
    \begin{cases}
        E_\nu(\mu_t) \leq  E_\nu(\mu_0)e^{-\lambda t} \\
        W_2^2(\mu_t,\nu) \leq \frac{4}{\lambda}E_\nu(\mu_0)e^{-\lambda t}\,.
    \end{cases}
\end{equation*}
\end{theorem}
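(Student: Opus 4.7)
The plan is to combine the global H\"older existence just established with the Polyak-Lojasiewicz inequality of Proposition \ref{PLIneq} and the dissipation identity coming from Proposition \ref{ChainRule}. The argument has three ingredients: (i) propagation of a uniform positive lower bound on $\mu_t$; (ii) exponential decay of $E_\nu(\mu_t)$ via PL and Gronwall; (iii) conversion of the energy rate into a Wasserstein rate using the metric-derivative bound.

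\emph{Step 1: uniform lower bound on $\mu_t$.} The Lagrangian density $f_t := \mu_t \circ \psi_t$ satisfies the logistic-type ODE $\dot f_t(\alpha) = f_t(\alpha)[\nu(\psi_t(\alpha)) - f_t(\alpha)]$ derived in Lemma \ref{ContrDensFt}. Its phase diagram shows that whenever $f_t(\alpha) < \min \nu$ the right-hand side is strictly positive, so the pointwise bound $f_t(\alpha) \geq \min(\underline{\mu_0},\underline{\nu})$ is preserved in time. Since $\psi_t$ is a diffeomorphism we deduce $\mu_t(x) \geq \lambda := \min(\underline{\mu_0},\underline{\nu})$ for every $t \geq 0$ and every $x \in M$; the same ODE argument applies verbatim on the closed Riemannian manifold. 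Consequently Proposition \ref{PLIneq} holds uniformly in time with the fixed constant $\lambda$.

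\emph{Step 2: exponential energy decay.} Because $\mu_t$ is H\"older continuous with strictly positive density, Lemma \ref{DiffRegMeas} yields $v_t = -\nabla \varphi_t \in \partial E_\nu(\mu_t)$ and Proposition \ref{ChainRule} gives the dissipation identity
\begin{equation}
\frac{d}{dt} E_\nu(\mu_t) = -\|v_t\|_{L_2(\mu_t)}^2.
\end{equation}
Combined with Proposition \ref{PLIneq}, this produces $\frac{d}{dt} E_\nu(\mu_t) \leq -\lambda E_\nu(\mu_t)$, and Gronwall's lemma gives the first claimed bound $E_\nu(\mu_t) \leq E_\nu(\mu_0)e^{-\lambda t}$.

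\emph{Step 3: Wasserstein decay.} The metric derivative of the curve $\mu_t$ is bounded by $\|v_t\|_{L_2(\mu_t)}$. Setting $g(t) := E_\nu(\mu_t)$, Step 2 gives $-g'(t) = \|v_t\|_{L_2(\mu_t)}^2 \geq \lambda g(t)$, hence
\begin{equation}
\|v_t\|_{L_2(\mu_t)} = \frac{\|v_t\|_{L_2(\mu_t)}^2}{\|v_t\|_{L_2(\mu_t)}} \leq \frac{-g'(t)}{\sqrt{\lambda g(t)}}.
\end{equation}
Integrating between $s$ and $t > s$ and using $\frac{d}{d\tau}(-2\sqrt{g}) = -g'/\sqrt{g}$ yields
\begin{equation}
W_2(\mu_s,\mu_t) \leq \int_s^t \|v_\tau\|_{L_2(\mu_\tau)} \,d\tau \leq \frac{2}{\sqrt{\lambda}}\bigl(\sqrt{g(s)}-\sqrt{g(t)}\bigr).
\end{equation}
Thus $\mu_t$ is a finite-length curve in $(\mathcal{P}_2(M),W_2)$, hence Cauchy, converging to some $\mu_\infty$; by lower semicontinuity of $E_\nu$ and $g(t) \to 0$ we have $E_\nu(\mu_\infty) = 0$, i.e.\ $\mu_\infty = \nu$. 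Letting $t \to \infty$ at fixed $s$ gives $W_2(\mu_s,\nu) \leq 2\sqrt{g(s)/\lambda}$, and squaring together with the Step 2 bound yields the second claim. The only delicate ingredient is the lower-bound propagation of Step 1; without it the PL constant would degrade along the flow and the Gronwall argument would fail.
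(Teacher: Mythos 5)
Your proof is correct and follows the same path the paper intends: propagate a uniform positive lower bound on $\mu_t$ via the logistic ODE for $f_t = \mu_t\circ\psi_t$ from Lemma \ref{ContrDensFt}, invoke Proposition \ref{PLIneq} with the resulting fixed constant $\lambda$, apply the dissipation identity from Proposition \ref{ChainRule} together with Gronwall to get energy decay, and then integrate the metric derivative with the $\sqrt{g}$ substitution to convert to the $W_2$ rate. The paper compresses all of this into two sentences (referencing Lemma \ref{ContrDensFt} and the PL inequality), but your fully spelled-out version is the argument it is pointing to, including the constant $\lambda = \min(\underline{\mu_0},\underline{\nu})$ and the factor of $4/\lambda$ in the Wasserstein bound.
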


\begin{proof}
    Since $\log(\mu_t)$ is globally bounded from below thanks to Lemma \ref{ContrDensFt}, it satisfies a global Polyak-Lojasiewisz at all times. The rest of the proof is a straightforward application of this inequality.
\end{proof}

\section{Critical Points for the Wasserstein Flow}
\label{SecCriticalPoints}

In the previous section, we studied regular solutions of Wasserstein gradient flows, with smooth initial 
 and target data. Our proofs heavily relied on regularity results obtained through potential theory estimates.
In this section, we consider some given probability measures $\mu$ and $\nu$ that are, depending on the context, of finite energy for the Coulomb kernel or the Energy Distance kernel. Our goal is to study critical points for the Wasserstein flow of the MMD energy $E_\nu$. 

\subsection{Critical Points and Lagrangian Critical Points for MMD Wasserstein Gradient Flows}

For an arbitrary function $\mathcal{F}$, we define critical points of the associated Wasserstein gradient flows. Intuitively, they correspond to measures where the discrete JKO steps are blocked, in direct analogy with gradient flows in finite dimensions.

\begin{definition}[Wasserstein critical point]\label{WCritPoints}
\label{def: crit point}
    Let $\mu$ be a probability measure such that $\mathcal{F}(\mu) < +\infty$. We say that $\mu$ is a Wasserstein critical point of $\mathcal{F}$ if there exists $\tau_0 >0$ such that for all $0<\tau \leq \tau_0$ we have:
    \begin{equation*}
        \mu \in \underset{\rho \in \mathcal{P}(\mathbb{R}^d)}{\arg \min}\, \mathcal{F}(\rho) + \frac{1}{2\tau}W_2^2(\mu,\rho)\,.
    \end{equation*}
\end{definition}

We study a sub-class of critical points, which we call Lagrangian critical points or displacement critical points.

\begin{definition}
    A probability measure $\mu$ is said to be a Lagrangian critical point for a functional $\mathcal{F}$ if, on $\text{supp}(\mu)$ we have
    \begin{equation*}
        \nabla \frac{\delta \mathcal{F}}{\delta \mu}(\mu) = 0\,.
    \end{equation*}
\end{definition}

For differentiable $\lambda$-convex functionals \cite{santambrogio2015optimal}, the two definitions are equivalent. In general settings, we cannot deduce one from the other, as the quantity above may not even belong to $\partial \mathcal{F}(\mu)$. However, in our cases, we can prove a partial result. Indeed, using the general differentiation result on JKO steps given by proposition \ref{DiffJKO} we see that the following Proposition is true.

\begin{proposition}
    Let $\mu$ be a critical point as defined in \ref{def: crit point}. Then $0 \in \partial \mathcal{F}(\mu)$ (where $0$ is seen as an element of $L_2(\mu)$).
\end{proposition}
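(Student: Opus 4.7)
The plan is to apply Proposition \ref{DiffJKO} with the JKO iterate chosen to be $\mu$ itself, and to read off the element of $\partial\mathcal{F}(\mu)$ from the explicit construction recalled after that proposition. The key observation is that $W_2(\mu,\mu) = 0$ forces any optimal coupling to sit on the diagonal, so the associated rescaled velocity plan is the zero vector field.

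First I would fix $\tau \in (0, \tau_0)$, where $\tau_0$ is given by the critical point hypothesis of Definition~\ref{WCritPoints}. The hypothesis says that $\mu$ attains the minimum in the JKO functional $\rho \mapsto \mathcal{F}(\rho) + \tfrac{1}{2\tau}W_2^2(\mu,\rho)$ when started from $\mu$. I can therefore legitimately set $\mu_\tau \coloneqq \mu$ in Proposition~\ref{DiffJKO}, which then guarantees that $\partial \mathcal{F}(\mu_\tau) = \partial \mathcal{F}(\mu)$ is non-empty, and moreover produces an explicit element of it.

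Next I would identify this element. Following the recipe given in the paragraph after Proposition~\ref{DiffJKO}, pick any $\hat{\gamma}_\tau \in \Gamma_0(\mu_\tau, \mu) = \Gamma_0(\mu, \mu)$, form the rescaled velocity plan $\gamma_\tau = f_\tau \# \hat{\gamma}_\tau$ where $f_\tau(x_1, x_2) = (x_1, (x_2 - x_1)/\tau)$, and take its barycenter projection $\tilde{\gamma}_\tau^0 \in \partial \mathcal{F}(\mu)$. Since
\begin{equation*}
\int |x_1 - x_2|^2 \,d\hat{\gamma}_\tau(x_1, x_2) = W_2^2(\mu,\mu) = 0,
\end{equation*}
the plan $\hat{\gamma}_\tau$ is concentrated on the diagonal $\{x_1 = x_2\}$. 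Consequently $\gamma_\tau$ is supported on $\{(x, 0) : x \in \supp(\mu)\}$, and the barycenter projection $\tilde{\gamma}_\tau^0$ is the zero vector field in $L^2(\mu)$.

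Combining the two steps, $0 = \tilde{\gamma}_\tau^0 \in \partial \mathcal{F}(\mu)$, which is the desired conclusion. There is essentially no obstacle here; the argument is a direct application of Proposition~\ref{DiffJKO} once one notices that self-transport of $\mu$ onto itself has a trivial optimal plan.
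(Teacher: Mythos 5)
Your argument is correct and is exactly the one the paper intends (the paper states the proposition without an explicit proof, relying on the preceding discussion of Proposition~\ref{DiffJKO} and the rescaled velocity plan). The observation that $\mu_\tau = \mu$ for a critical point, that $W_2(\mu,\mu)=0$ forces any optimal self-coupling onto the diagonal, and hence that the rescaled plan $\gamma_\tau$ has zero barycentric projection, is a complete and direct application of the cited machinery.
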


Combining this result and lemma \ref{MinimNormSubdiff}, we see that if $\mu$ is a critical point of the $G$-energy $E_\nu$, then the gradients of the potentials $G \star \mu$ and $G \star \nu$ are equal $\mu$-almost everywhere. This observation is the key argument enabling us to prove the results of this section.

\subsection{Characterization of Lagrangian Critical points for MMD Wasserstein Gradient Flows}

The main result of this section is presented in the following theorem. 

\begin{theorem}\label{eq measure open}
    Let $\mu$ be a Lagrangian critical point for the MMD functional $E_\nu^G$. Then
    \begin{equation*}
        \mu_{|\text{Int(supp}(\mu))} = \nu_{|\text{Int(supp}(\mu))}\,,
    \end{equation*}
    holds  if G is the Coulomb kernel  
        or if G is the Energy Distance kernel and $d$ is odd. 
\end{theorem}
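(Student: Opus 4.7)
The plan is to leverage the fact that both kernels are, up to multiplicative constants, fundamental solutions of \emph{local} differential operators, and to combine this with the local constancy of the potential $\varphi := G\star(\mu-\nu)$ forced by the Lagrangian critical point condition.

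First, I would unpack the Lagrangian critical point condition as $\nabla\varphi \equiv 0$ on $\text{supp}(\mu)$. Fix a connected component $U$ of $\text{Int}(\text{supp}(\mu))$. Since $U$ is open, connected and contained in $\text{supp}(\mu)$, the vanishing of $\nabla\varphi$ on $U$ forces $\varphi$ to be (classically) constant on $U$.

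Second, I would apply the differential characterisation of each kernel. For the Coulomb kernel, Proposition~\ref{InvLaplace} gives
$$ -\Delta\varphi \;=\; c_d\,(\mu-\nu) $$
as distributions on $\mathbb{R}^d$ (or on the manifold $M$ via \eqref{lap-bel eq}, with the constant coming from the volume term absorbed since both $\mu$ and $\nu$ are probability measures). Restricting this equality to $U$, the left-hand side vanishes distributionally because $\varphi$ is constant there, whence $\mu_{|U}=\nu_{|U}$; summing over the connected components of $\text{Int}(\text{supp}(\mu))$ yields the claim. For the Energy Distance kernel, $G(x,y)=-\|x-y\|$ is (up to a positive constant) the fundamental solution of $(-\Delta)^{(d+1)/2}$. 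When $d$ is odd, $(d+1)/2$ is an integer, so this operator is a genuine local differential operator, and the same distributional argument yields $(-\Delta)^{(d+1)/2}\varphi = c(\mu-\nu)$ and hence $\mu=\nu$ on $U$. In even dimension $(-\Delta)^{(d+1)/2}$ is a non-local fractional Laplacian: local constancy of $\varphi$ on $U$ no longer forces the operator to vanish on $U$, which is the structural reason for the parity hypothesis.

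The main obstacle is the regularity upgrade behind the first step: the Lagrangian condition only provides $\nabla\varphi=0$ in a $\mu$-almost-everywhere sense on $\text{supp}(\mu)$, whereas the PDE step requires $\varphi$ to be honestly (classically) constant on $U$. To bridge this gap one would combine (i) the quasi-continuity of $\varphi$ coming from the finite $G$-energy of $\mu$ and $\nu$, (ii) the full topological support of $\mu$ on $U$, which extends the $\mu$-a.e.\ vanishing of a continuous representative of $\nabla\varphi$ to all of $U$, and (iii) a standard connectedness argument on each component of $U$. Once this is handled, the Coulomb and Energy Distance cases are treated in completely parallel fashion.
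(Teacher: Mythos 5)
Your proposal is correct and takes essentially the same route as the paper: for the Coulomb kernel both take the distributional Laplacian of the first variation $\varphi_\mu-\varphi_\nu$ on the open set $\text{Int}(\text{supp}(\mu))$ where the Lagrangian condition makes it locally constant, and for the Energy Distance kernel both rely on the fact that $-\|x-y\|$ is (up to a nonzero constant) a fundamental solution of $(-\Delta)^{(d+1)/2}$, a local operator precisely when $d$ is odd. The only real difference is organizational: where you invoke the fundamental-solution identity for $(-\Delta)^{(d+1)/2}$ in one stroke, the paper proves it from scratch via a finite induction on the potentials $P_{2k+1}^\mu=\int\|x-y\|^{-(2k+1)}d\mu(y)$, showing at each step that $\Delta P_{2k+1}^\mu=(2k+1)(2k+3-d)\,P_{2k+3}^\mu$ in the distributional sense with explicit control of the boundary terms near the singularity, thereby reducing the Energy Distance case to the already-proved Coulomb case after $(d-1)/2$ Laplacian steps; a minor caveat is that this constant need not be positive (as your sketch suggests) — what matters, and what the paper checks, is only that it is nonzero for the range of $k$ used.
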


This formulation may appear surprising. We obtain results only in the interior of the support of our measure, even though it may be empty, or $\mu$ may be composite, being the sum of a density measure and a singular measure for example. As the proof for the Energy Distance kernel is more involved, we focus on the first statement for now.

In both cases, the first variation of the $E_\nu$ is given by: 
$
    \frac{\delta E_\nu}{\delta \mu}(\mu) = \phi_\mu - \phi_\nu
$. For measures $\mu$ et $\nu$ with finite G-energy, i.e. for measures such that 
$
    \int G d\mu^{\otimes 2}, \int G d\nu^{\otimes 2} < +\infty
$,
this function is locally integrable, and satisfies in a distributional sense: 
\begin{equation}
\label{eq: lag crit eq}
    \nabla \frac{\delta E_\nu}{\delta \mu}(\mu) = \nabla \phi_\mu - \nabla \phi_\nu\,.
\end{equation}

If $\mu$ is a Lagrangian critical point for the Coulomb kernel, this quantity is constant equal to 0. Upon differentiating once again in a distributional sense we conclude that in the interior of the domain $\text{supp}(\mu)$, $d\mu = d\nu$ as per Proposition \ref{InvLaplace}, proving the first part of the theorem.
The proof in the Energy Distance case requires iterating Laplacians.

\begin{proposition}
    Let $\mu$ be a probability measure on $\mathbb{R}^d$. Consider the associated potential 
    $\phi_{\mu}(x) \coloneqq \int -\|x-y\|d\mu(y)$.
    Then, its distributional Laplacian exists almost everywhere and is given by: 
    \begin{equation*}
        \Delta \phi_{\mu}(x) \coloneqq \int -\frac{d-1}{\|x-y\|} d\mu(y)\,.
    \end{equation*}
\end{proposition}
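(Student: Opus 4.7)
The plan is to reduce the claim to a single-point calculation and then pass to the measure-valued statement by Fubini. I implicitly assume $d \geq 2$; the one-dimensional case is special since $\Delta(-|x|) = -2\delta_0$ in that setting and the formula must be interpreted differently.

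The core computation is for $f(x) = -\|x\|$, i.e.\ the potential of a unit Dirac. Away from the origin, $\nabla\|x\| = x/\|x\|$ and a direct computation gives
\begin{equation*}
    \Delta\|x\| = \nabla\cdot(x/\|x\|) = \sum_{i=1}^d\Bigl(\tfrac{1}{\|x\|}-\tfrac{x_i^2}{\|x\|^3}\Bigr) = \frac{d-1}{\|x\|},
\end{equation*}
and the right-hand side is locally integrable on $\mathbb{R}^d$ for $d\geq 2$. To promote this to a distributional identity, I would pair with a test function $\varphi\in C^\infty_c$ and apply Green's second identity on the exterior region $\{\|x\|>\epsilon\}$ with outward normal $n=-x/\|x\|$. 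The two boundary terms on $\{\|x\|=\epsilon\}$ are of size $\|\nabla\varphi\|_\infty\cdot\epsilon\cdot\omega_{d-1}\epsilon^{d-1} = O(\epsilon^d)$ and $\|\varphi\|_\infty\cdot\omega_{d-1}\epsilon^{d-1} = O(\epsilon^{d-1})$ respectively, both vanishing as $\epsilon\to 0$ precisely because $d\geq 2$. Dominated convergence then yields $\int\|x\|\Delta\varphi\,dx = \int\frac{d-1}{\|x\|}\varphi\,dx$, i.e.\ $\Delta(-\|x\|)=-(d-1)/\|x\|$ in $\mathcal{D}'(\mathbb{R}^d)$.

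To pass to $\phi_\mu$, I would pair with $\varphi\in C^\infty_c$ and exchange the order of integration:
\begin{equation*}
    \langle\Delta\phi_\mu,\varphi\rangle = -\iint\|x-y\|\,\Delta\varphi(x)\,dx\,d\mu(y).
\end{equation*}
Fubini applies since $\Delta\varphi$ has compact support and $\|x-y\|$ grows at most linearly in $y$, integrable against $\mu$ (finite first moment being needed for $\phi_\mu$ to be well-defined in the first place). For each fixed $y$, the translated version of the previous step evaluates the inner integral as $\int\frac{d-1}{\|x-y\|}\varphi(x)\,dx$. A second Fubini, legitimate because $\int_K\|x-y\|^{-1}dx$ is bounded uniformly in $y$ for any compact $K\supset\operatorname{supp}\varphi$, then produces the claimed identity, and local integrability of the right-hand side as a function of $x$ follows from the same estimate.

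The only mildly delicate step is keeping track of the two Fubini exchanges in the presence of the $1/\|x-y\|$ singularity; the key conceptual point is the absence of any Dirac contribution at the singular set — in contrast with the Coulomb kernel $1/\|x\|^{d-2}$, whose Laplacian does produce a $\delta$ — a fact traceable to $\|x\|$ being merely Lipschitz rather than harmonic off the origin.
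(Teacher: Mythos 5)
Your proof is correct and takes essentially the same route as the paper: pair against a test function, apply Green's identity on the exterior of a small ball around the singularity, check that the boundary terms vanish as the radius shrinks (which is where $d\geq 2$ enters), and conclude by Fubini. The only organizational difference --- you isolate the single-Dirac computation $\Delta(-\|x\|)=-(d-1)/\|x\|$ before invoking Fubini twice, whereas the paper carries the $\mu$-integral along throughout and controls the ball and boundary contributions uniformly in $y$ --- is cosmetic.
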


\begin{proof}
    Let $g \in \mathcal{C}_c^\infty$, and denote 
    $P_{\mu} (x)$ as $\int -\frac{d-1}{\|x-y\|} d\mu(y)$.
    We wish to show 
    $
        \langle \phi_{\mu},\Delta g \rangle = \langle P_{\mu}, g \rangle
    $.
    The primary challenge arises from the singularity of the function $\|\cdot - y\|$ at $y$. We circumvent this by integrating on a small ball of radius $\varepsilon >0$ near $y$, carefully controlling the error term. The inversion of integrals in the second equality is justified by the Fubini theorem. 
\begin{align*}
    \langle \phi_{\mu},\nabla g \rangle &\coloneqq \int \phi_{\mu}(x) \Delta g(x) dx \\
    &= \int \left( \int - \|x-y\| \Delta g(x) dx \right) d\mu(y) \\
    &= \int \left[ \left( \int_{B(y,\varepsilon)} -\|x-y\| \Delta g(x) dx \right)  +\left( \int_{\mathbb{R}^d \setminus B(y,\varepsilon)} -\|x-y\| \Delta g(x) dx \right)\right] d\mu(y) \\
     \langle \phi_{\mu},\nabla g \rangle &= \int \left[  I_\varepsilon (y) + J_\varepsilon (y) \right] d\mu(y)\,,
\end{align*}
where the quantities $I_\varepsilon$ and $J_\varepsilon$ are defined by the formulas just above.
First, as $g \in \mathcal{C}_c^\infty$, we get:
\begin{equation*}
    |I_\varepsilon (y)| \leq 2 \| \Delta g\|_{\infty} \varepsilon\,.
\end{equation*}
For $J_\varepsilon (y)$, we can use Green's Formula, since all our quantities are in $\mathcal{C}_c^\infty$ on the open set $ \Omega \coloneqq \mathbb{R}^d \setminus B(y,\varepsilon)$. If we take $\eta(x)$ to be the unit vector at $x \in \partial \Omega(y)$ pointing toward the exterior of $\Omega(y)$, then  $\eta(x) = -\frac{x-y}{\|x-y\|}$. For $x \in \partial \Omega(y)$, the directional derivative of a function $f$ at x is defined by  $\frac{\partial f}{\partial \eta} (x) \coloneqq \langle \nabla f(x) , \eta(x) \rangle$. We denote $dS$ the usual area measure on $B(y,\varepsilon)$. Using Green's Formula we have:
\begin{equation*}
    J_\varepsilon (y) = \int_{\Omega(y)} \Delta -d_y (x) g(x)dx + \int_{\partial \Omega(y)} \frac{\partial d_y}{\partial \eta}(x)g(x)dS(x) - \int_{\partial \Omega(y)} d_y(x)\frac{\partial g}{\partial \eta}(x)dS(x).
\end{equation*}
First, we know that $\Delta d_y(x) = \frac{d-1}{\|x-y\|}$.
Second, as $\nabla d_y(x) = \frac{x-y}{\|x-y\|}$, we have $\frac{\partial d_y}{\partial \eta}(x) = -1$. This leads to:
\begin{equation*}
    \left|\int_{\partial \Omega(y)} \frac{\partial d_y}{\partial \eta}(x)g(x)dS(x) \right|  \leq \|g\|_{\infty} \mathcal{A}(S(y,\varepsilon))\,.
\end{equation*}
Finally, as $g$ vanishes at infinity, there exists a constant $C$ independent of $y$ such that:
\begin{equation*}
    \left|\int_{\partial \Omega(y)} d_y(x)\frac{\partial g}{\partial \eta}(x)dS(x) \right| \leq C \mathcal{A}(S(y,\varepsilon))\,.
\end{equation*}

All these quantities vanish as $\varepsilon \rightarrow 0^+$, independently of $y$, which implies 
$\langle \phi_{\mu},\Delta g \rangle = \langle P_{\mu}, g \rangle$, i.e. 
$\Delta \phi_\mu(x) = \int -\frac{d-1}{\|x-y\|}d\mu(y).$
\end{proof}

Now, we prove the following property, enabling us to prove next the second statement in Theorem \ref{eq measure open}.

\begin{lemma}
    Let $\mu$ be a Lagrangian critical point for the Energy Distance Wasserstein gradient flow towards $\nu$.
    Then, on the open set $\text{Int(supp}(\mu))$ 
    $\phi_\mu^G = \phi_\nu^G$
    where $G$ is the Coulomb kernel.
\end{lemma}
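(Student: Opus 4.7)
The plan is to reduce the statement to an iterative application of the distributional Laplacian. From the Lagrangian critical point condition $\nabla(\phi_\mu^{ED} - \phi_\nu^{ED}) = 0$ on $\supp(\mu)$, and using \eqref{eq:lag crit eq}, the function $\phi_\mu^{ED} - \phi_\nu^{ED}$ is locally constant on each connected component of the open set $U \coloneqq \text{Int(supp}(\mu))$. Hence $\Delta(\phi_\mu^{ED} - \phi_\nu^{ED}) = 0$ in $\Dd'(U)$, and more generally $\Delta^k(\phi_\mu^{ED} - \phi_\nu^{ED}) = 0$ in $\Dd'(U)$ for every $k \geq 1$. The strategy is to compute this iterated Laplacian globally as a multiple of the Coulomb potential.

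For $0 \leq k \leq (d-1)/2$, introduce the family of potentials
\begin{equation}
    \phi_\mu^{(k)}(x) \coloneqq \int \|x-y\|^{1-2k} \, d\mu(y)\,,
\end{equation}
which are all locally integrable since $1-2k > -d$ in this range. Note $\phi_\mu^{(0)} = -\phi_\mu^{ED}$ and $\phi_\mu^{((d-1)/2)}$ is, up to a nonzero constant depending on $d$, the Coulomb potential $\phi_\mu^G$. The key claim, proved inductively, is that in the distributional sense on all of $\RR^d$,
\begin{equation}
    \Delta \phi_\mu^{(k)} = c_k \, \phi_\mu^{(k+1)} \qquad \text{with} \qquad c_k = (1-2k)(d-1-2k)\,,
\end{equation}
for $0 \leq k < (d-1)/2$. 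Each $c_k$ is nonzero on this index range, vanishing only at the endpoint $k = (d-1)/2$. The base case $k=0$ is the proposition just proved; the inductive step follows the same argument, cutting out a ball $B(y,\varepsilon)$ around the singularity, applying Green's formula on the complement, and controlling the boundary integrals over $S(y,\varepsilon)$ as $\varepsilon \to 0^+$. Iterating yields
\begin{equation}
    \Delta^{(d-1)/2}\bigl(\phi_\mu^{ED} - \phi_\nu^{ED}\bigr) = C_d \,\bigl(\phi_\mu^G - \phi_\nu^G\bigr) \quad \text{in } \Dd'(\RR^d),
\end{equation}
with $C_d \neq 0$; here the assumption that $d$ is odd is essential so that $(d-1)/2$ is an integer landing exactly on the Coulomb exponent $2-d$. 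Restricting to $U$ and using that the left-hand side vanishes there gives $\phi_\mu^G = \phi_\nu^G$ on $U$.

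The main obstacle is the rigorous justification of the iterated identity $\Delta \phi_\mu^{(k)} = c_k \phi_\mu^{(k+1)}$ as distributions on the whole of $\RR^d$ (not just away from $\supp \mu$). Each power kernel $\|x-y\|^{1-2k}$ carries a different order of singularity, so the Green's-formula computation used in the proposition has to be reproduced at every step: one must check that the interior term $I_\varepsilon(y)$ is $O(\varepsilon^{d-(2k-1)})$ and that the two boundary terms $\int_{S(y,\varepsilon)} \partial_\eta \|\cdot-y\|^{1-2k}\, g \, dS$ and $\int_{S(y,\varepsilon)} \|\cdot-y\|^{1-2k}\, \partial_\eta g\, dS$ both vanish in the limit, uniformly in $y$, so that Fubini can then be applied against $\mu$. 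Since $1-2k \geq 2-d$ throughout, the surface measure scaling $\mathcal{A}(S(y,\varepsilon)) = O(\varepsilon^{d-1})$ is always strong enough to kill these terms, and no new Dirac mass appears at the diagonal until one reaches the Coulomb exponent itself --- which is precisely where the iteration is stopped.
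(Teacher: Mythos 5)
Your proof is correct and follows essentially the same route as the paper: iterating the distributional Laplacian through the family of kernels $\|x-y\|^{1-2k}$, which passes from the Energy Distance potential to the Coulomb potential in $(d-1)/2$ steps (hence $d$ odd), with the factor $(1-2k)(d-1-2k)$ nonvanishing at every intermediate step, and with each step justified by cutting out a small ball around the singularity and applying Green's formula. The only differences from the paper are cosmetic — your cleaner indexing of the intermediate potentials $\phi_\mu^{(k)}$ starting at the Energy Distance kernel, and your phrasing of the conclusion as one global identity $\Delta^{(d-1)/2}(\phi_\mu^{ED}-\phi_\nu^{ED})=C_d(\phi_\mu^G-\phi_\nu^G)$ restricted to $U$ rather than a step-by-step equality $P_{2k+1}^\mu=P_{2k+1}^\nu$ on $U$.
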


\begin{proof}
    We denote $P_k^{\mu}(x) \coloneqq \int \frac{1}{\|x-y\|^{k}}d\mu(y) $ and prove the following result by finite induction: 
    \begin{equation*}
        \forall k<(d-1)/2, P_{2k+1}^{\mu} = P_{2k+1}^{\nu}\,.
    \end{equation*}
    
    We already established the result is true for $k=1$.
    Now let us take $k<(d-3)/2$ and suppose $P_{2k+1}^{\mu} = P_{2k+1}^{\nu}$.
    We know that if $v : \mathbb{R}^+ \rightarrow \mathbb{R}$ and $u(x) \coloneqq v(\|x\|)$, then its distributional Laplacian is given by: 
    \begin{equation*}
        \Delta u(x) = \frac{d-1}{\|x\|}v'(\|x\|) + v''(\|x\|)\,.
    \end{equation*}
    If $v(r) \coloneqq \frac{1}{r^{2k+1}}$, then 
    $\Delta u(x) = \frac{(2k+1)(2k+3-d)}{\|x\|^{-(2k+3)}}.$
    Denoting $K_y(x) \coloneqq \frac{1}{\|x-y\|^{2k+1}}$,
    we have 
    $\Delta K_y(x) = \frac{(2k+1)(2k+3-d)}{\|x-y\|^{2k+3}}\,.$
    Again, let us take $g \in \mathcal{C}_c^\infty$ and prove 
    $\langle P_{2k+1}^\mu , \Delta g \rangle = (2k+1)(2k+3-d)\langle P_{2k+3}^\mu , g \rangle. $
    We use Fubini's theorem, and avoid the singularity around small balls of radius $\varepsilon >0$: 
    \begin{align*}
    \langle P_{2k+1}^\mu,g \rangle &\coloneqq \int P_{2k+1}^\mu(x) \Delta g(x) dx \\
    &= \int \left( \int K_y(x)\Delta g(x) dx \right) d\mu(y) \\
    &= \int \left[ \left( \int_{B(y,\varepsilon)} K_y(x)\Delta g(x) dx \right)  +\left( \int_{\mathbb{R}^d \setminus B(y,\varepsilon)} K_y(x)\Delta g(x) dx \right)\right] d\mu(y) \\
    \langle P_{2k+1}^\mu,g \rangle &= \int \left[  I_\varepsilon (y) + J_\varepsilon (y) \right] d\mu(y) \,.
\end{align*}
Now, again, we need to control the growth of $I_\varepsilon (y)$ and $J_\varepsilon (y)$.
Since $2k+1<d$ we get:
\begin{equation*}
    \int_{B(y,\varepsilon)} K_y(x)\Delta g(x)dx \leq \|\Delta g(x)\|_{\infty}\int_{B(0,\varepsilon)} K_0(x)dx\,,
\end{equation*} 
and this quantity tends to 0 uniformly as $\varepsilon$ tends to 0. Now, as done previously, we use Green's Formula to express $J_\varepsilon(y)$: 
$$J_\varepsilon (y) = \int_{\Omega(y)} \Delta K_y(x) g(x)dx - \int_{\partial \Omega(y)} \frac{\partial K_y}{\partial \eta}(x)g(x)dS(x) + \int_{\partial \Omega(y)} K_y(x)\frac{\partial g}{\partial \eta}(x)dS(x)\,.$$
We have: 
$$\frac{\partial K_y}{\partial \eta}(x) = \langle -(2k+1)\frac{x-y}{\|x-y\|^{2k+3}}, -\frac{x-y}{\|x-y\|}\rangle = \frac{1}{\|x-y\|^{2k+2}}\,,$$
which gives: 
\begin{equation*}
    \left|\int_{\partial \Omega(y)} \frac{\partial K_y}{\partial \eta}(x)g(x)dS(x)\right| \leq \|g\|_\infty \int \frac{1}{\|x-y\|^{2k+2}}dS(x) = \|g\|_\infty C_d \varepsilon^{d-1-(2k+2)}\,,
\end{equation*}
and again the right term tends to 0 uniformly in $y$ as $\varepsilon$ tends to 0, as $2k+2<d-1$.
Finally: 
$$\int_{\partial \Omega(y)} K_y(x)\frac{\partial g}{\partial \eta}(x)dS(x) \leq \|\frac{\partial g}{\partial \eta}\|_\infty \int_{\partial \Omega(y)}K_y(x)dS(x) = \|\frac{\partial g}{\partial \eta}\|_\infty C_d\varepsilon^{d-1-(2k+1)}\,, $$
which tends to 0 uniformly in $y$ as $\varepsilon$ tends to 0.
This proves: 
\begin{equation*}
    \Delta P_{2k+1}^\mu = (2k+1)(2k+3-d)P_{2k+3}^\mu\,.
\end{equation*}
Now, back to our hypothesis $P_{2k+1}^\mu = P_{2k+1}^\nu$. By taking the distributional Laplacian from both sides, we get $(2k+1)(2k+3-d)P_{2k+3}^\mu = (2k+1)(2k+3-d)P_{2k+3}^\nu$, and as $2k<d-3$, we get: 
$P_{2k+3}^\mu = P_{2k+3}^\nu$.
This proves our result.
Now, to obtain the conclusion, if $d$ is odd, by taking $k = (d-3)/2 < (d-1)/2$, we finally get in the interior of $\text{supp}(\mu)$ the equality $\phi_\mu^G = \phi_\nu^G$.
\end{proof}

The potentials $\phi_\mu^G $ and $\phi_\nu^G$ are superharmonic on the interior of $\text{supp}(\mu)$, and taking their distributional Laplacian yields 
$\mu_{|\text{Int(supp}(\mu))} = \nu_{|\text{Int(supp}(\mu))}\,, $
which is exactly the second statement of Theorem \ref{eq measure open}.

In this section, we were able to characterize a particular class of critical points. However, our result does not capture the singular parts of our measures. In the next section, we study how singular measures behave under the Wasserstein gradient flow of $E_\nu$, proving in Theorem \ref{ThSingMeasCritPts} that singular enough measures cannot be critical points.

\section{No Local Minima in the Wasserstein Geometry}
\label{SecNoLocalMinimum}

Recall that our PL inequality does not hold at a measure $\mu$ if there is no strictly positive lower bound on the density of $\mu$. In particular, it is unclear if local non-global minima of the MMD norm $E_\nu$ can exist. In this section, we establish that there are no local minima other than the global one.
The previous section was dedicated to Lagrangian critical points. We described measures $\mu$ where the velocity field induced by $\nabla \frac{\delta E\nu}{\delta \mu}(\mu) $ is equal to 0 everywhere. In these cases, a JKO step cannot be described through a push-forward map.
However, this pushforward action by maps does not describe all possible dynamics, and diffusion of the mass can occur. In terms of Wasserstein geometry, it means, heuristically, that a descent direction has to be found in the space of velocity plans \cite[Section 12.4]{ambrosio2005gradient}, instead of $L_2$ velocity maps.
To do this, we use concepts inspired by flow interchange techniques developed in \cite{matthes2009flowinter,poissonflowinter2015}. Instead of studying the gradient flow of $E$, we study how $E$ behaves along a certain auxiliary flow, here associated with the Boltzmann entropy functional.

We show, as the main result of this section, the following theorem.

\begin{theorem}[No local non global minima]\label{NoLocMin}
    Let $\mu$ be a probability measure. Then, if $\mu \neq \nu$, there exists a curve $\mu_t$ which is $1/2$-H\"older for the Wasserstein distance, such that $t \mapsto E_\nu(\mu_t)$ is strictly decreasing for $t$ small enough.
\end{theorem}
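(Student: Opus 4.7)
The plan is to take the simplest conceivable curve: the flat-convex interpolation $\mu_t = (1-t)\mu + t\nu$ for $t \in [0,1]$. The key point is that although $E_\nu$ is not geodesically convex in $W_2$, it \emph{is} a strictly convex quadratic form on signed measures, since the Coulomb kernel is conditionally positive definite; hence $E_\nu$ should decrease quadratically along any flat interpolation from $\mu$ toward $\nu$. A more gradient-flow-like curve (using the heat semigroup and the hinted flow interchange with the Boltzmann entropy) would also do the job, but nothing that elaborate seems necessary for the bare statement as written.

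The energy decrease follows from a quadratic expansion in a single line. Since $\mu_t - \nu = (1-t)(\mu - \nu)$ and $E_\nu(\rho) = \tfrac12 \langle \rho - \nu,\, G\star(\rho - \nu)\rangle$ is a pure quadratic form in $\rho - \nu$, one immediately obtains $E_\nu(\mu_t) = (1-t)^2\, E_\nu(\mu)$, which is strictly decreasing on $[0,1)$ because $\mu \neq \nu$ forces $E_\nu(\mu) > 0$ by conditional positive definiteness.

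For the $1/2$-H\"older estimate in $W_2$, I would write down an explicit coupling. For $0 \le s' \le s \le 1$, set
\[
\gamma_{s,s'} \;=\; (1-s)\,(\mathrm{id}\times\mathrm{id})_\#\mu \;+\; s'\,(\mathrm{id}\times\mathrm{id})_\#\nu \;+\; (s-s')\,\sigma,
\]
where $\sigma$ is an optimal coupling of $\nu$ (as first marginal) and $\mu$ (as second marginal). A routine marginal check gives $\pi_1\#\gamma_{s,s'} = \mu_s$ and $\pi_2\#\gamma_{s,s'} = \mu_{s'}$, while the transport cost equals $(s-s')\,W_2^2(\mu,\nu)$. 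Taking a square root yields $W_2(\mu_s,\mu_{s'}) \le \sqrt{|s-s'|}\,W_2(\mu,\nu)$, which is exactly the $1/2$-H\"older property with constant $W_2(\mu,\nu)$.

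The only thing that could go wrong in this plan is the implicit assumption $W_2(\mu,\nu) < \infty$, which holds automatically in $\mathcal{P}_2$. If one wants to allow probability measures with finite Coulomb energy but infinite second moments, the linear interpolation is no longer globally H\"older and a genuinely different construction is needed; this is presumably where the authors' flow interchange with the Boltzmann entropy enters, building a descending curve through the heat semigroup $(S_t)$ — which is itself $1/2$-H\"older in $W_2$ on $\mathcal{P}_2$ — and controlling the variation of $E_\nu$ against the entropy dissipation. In the standard setting of the paper, however, the short flat-interpolation argument above should already suffice.
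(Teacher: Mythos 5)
Your proof is correct, and it takes a genuinely different and more elementary route than the paper's. You exploit that $E_\nu$ is a pure quadratic form in $\mu-\nu$: along the linear interpolation $\mu_t = (1-t)\mu + t\nu$ one has $E_\nu(\mu_t) = (1-t)^2 E_\nu(\mu)$, and your explicit coupling gives the $1/2$-H\"older bound $W_2(\mu_s,\mu_{s'}) \le \sqrt{|s-s'|}\,W_2(\mu,\nu)$; this is airtight whenever $\mu,\nu\in\mathcal{P}_2$ with $E_\nu(\mu)<\infty$. The paper instead uses the Hahn--Jordan decomposition $\mu-\nu=\mu_+-\nu_-$, runs the heat semigroup only on a well-chosen piece $\mu_{+|A}$ of $\mu_+$, and invokes Watson's pointwise asymptotics of $K_t\star$ on mutually singular measures (Lemma \ref{lemma:estim heat singular measures}) to show $\frac{d}{dt}E_\nu(\mu_t)<0$ for small $t$; the $1/2$-H\"older regularity there comes from the EVI of the entropy gradient flow. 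The payoff of the heavier machinery only materialises later in the section: under a Minkowski-dimension hypothesis on $\mu_+$, the heat perturbation drives the energy down at rate $t^{1-\delta/2}$, which dominates the $W_2^2=O(t)$ cost and lets the authors rule out stationarity of the JKO scheme (Lemma \ref{SingMeasureCrit} and the theorem following it). Along your linear interpolation the energy drop is only $O(t)$ against an $O(t)$ Wasserstein-squared cost, so nothing can be concluded about proximal steps for small $\tau$. In short, you prove Theorem \ref{NoLocMin} exactly as stated --- and in doing so expose that, read literally, the statement follows almost immediately from strict flat convexity of $E_\nu$ together with the standard $1/2$-H\"older regularity of convex interpolation in $W_2$ --- but the paper's heat-flow construction is the one that carries the weight of the subsequent ``no JKO critical point'' results.
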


We begin by noting that the MMD energy only depends on the difference between the measures. Indeed, since $(\mu,\nu) \mapsto E_\nu(\mu)$ is a function of $\mu - \nu$, we can use the Hahn-Jordan decomposition of the signed measure $\mu - \nu$. 

\begin{lemma}[Hahn-Jordan decomposition]
    Let $\mu$ and $\nu$ be two probability measures. Then there exists a unique decomposition $\mu - \nu = \mu_+ - \nu_-$, where $\mu_+$ and $\nu_-$ are mutually singulars measures. Additionally, $\mu_+ \ll \mu$ and $\nu_- \ll \nu$.
\end{lemma}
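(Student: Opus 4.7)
The plan is to deduce the stated decomposition from the classical Hahn decomposition theorem applied to the signed measure $\sigma \coloneqq \mu - \nu$, which is a bounded signed (Borel) measure since $\mu$ and $\nu$ are probability measures.

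First I would invoke the Hahn decomposition theorem to produce a Borel partition $\mathbb{R}^d = P \sqcup N$ such that $\sigma(A) \geq 0$ for every Borel $A \subset P$ and $\sigma(A) \leq 0$ for every Borel $A \subset N$. Then I define
\begin{equation}
\mu_+(A) \coloneqq \sigma(A \cap P) = \mu(A \cap P) - \nu(A \cap P),
\qquad
\nu_-(A) \coloneqq -\sigma(A \cap N) = \nu(A \cap N) - \mu(A \cap N).
\end{equation}
Both are nonnegative Borel measures by the defining property of $P$ and $N$, and since $A$ is the disjoint union of $A \cap P$ and $A \cap N$ we have $\mu(A) - \nu(A) = \sigma(A) = \mu_+(A) - \nu_-(A)$, establishing the decomposition. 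Mutual singularity follows from $\mu_+(N) = 0 = \nu_-(P)$, so that $\mu_+$ is concentrated on $P$ and $\nu_-$ on the disjoint set $N$.

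For uniqueness, I would suppose $\mu - \nu = \rho_+ - \rho_-$ is another decomposition with $\rho_+$ and $\rho_-$ mutually singular, and pick a Borel partition $\mathbb{R}^d = P' \sqcup N'$ with $\rho_+(N') = \rho_-(P') = 0$. Then $P'$ (resp.~$N'$) is a positive (resp.~negative) set for $\sigma$, so on any Borel $A$ one can compute $\rho_+(A) = \sigma(A \cap P')$ and $\mu_+(A) = \sigma(A \cap P)$; the standard symmetric-difference argument, namely $\sigma((P \triangle P') \cap A) = 0$ because $P \triangle P' \subset (P \cap N') \cup (N \cap P')$ is both $\sigma$-positive and $\sigma$-negative, yields $\rho_+ = \mu_+$ and hence $\rho_- = \nu_-$.

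Finally, for the absolute continuity statements, if $A$ is a Borel set with $\mu(A) = 0$, then $\mu_+(A) = \mu(A \cap P) - \nu(A \cap P) \leq \mu(A \cap P) \leq \mu(A) = 0$, so $\mu_+ \ll \mu$; the same computation with $P$ replaced by $N$ and the roles of $\mu,\nu$ swapped gives $\nu_- \ll \nu$. The only nontrivial input here is the Hahn decomposition theorem itself, which is standard; everything else is bookkeeping.
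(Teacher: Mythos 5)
Your proof is correct and is the standard derivation from the Hahn decomposition theorem; the paper states this lemma without proof, treating it as a classical fact (it is the Jordan decomposition of the signed measure $\mu-\nu$), so there is no paper argument to compare against. Everything you wrote checks out, including the absolute continuity step, which combines your inequality $\mu_+(A)\leq\mu(A\cap P)\leq\mu(A)$ with the nonnegativity of $\mu_+$ established earlier.
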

A heat diffusion process can be applied to $\mu_+$, enabling us to prove the main theorem of this section, stating that even though our functional is non-convex in the Wasserstein geometry, it does not admit any local minima for the Wasserstein geometry that is not global (meaning $\mu = \nu$ or in an equivalently $\mu_+ = \nu_- = 0$). 

\subsection{Heat Diffusion Perturbation}

Let $\rho_0$ be a measure dominated by $\mu$. We write $\mu = \mu - \rho + \rho$ and consider the curve $\rho_t$ defined as the solution of the heat equation $\partial_t \rho_t = \Delta \rho_t$ with initial condition $\rho_0 = \rho$. As is well-known, the heat equation is the Wasserstein gradient flow of the Boltzmann entropy functional defined by: 
$
    \mathcal{H}(\rho) \coloneqq \int \rho \log(\rho) dx
$
if the measure has a density $\rho$ (with abuse of notation) with respect to the Lebesgue measure and $+\infty$ otherwise. Furthermore, the solution $\rho_t$ is explicit: $
    \rho_t = K_t \star \rho$, where $K_t$ is the heat kernel in $\mathbb{R}^d$ defined by 
\begin{equation*}
    K_t(x) \coloneqq \frac{1}{(4\pi t)^{d/2}}\exp (-\|x\|^2/4t)\,.
\end{equation*}
\begin{remark}
    Here, $\rho$ is not necessarily a probability measure. However, we can write the whole Wasserstein formalism for any measure space $\mathcal{M}_m \coloneqq \{ \mu \in \mathcal{M}_+(\mathbb{R}^d), \mu(\mathbb{R}^d) = m \}$. We  denote $W_2(\alpha, \beta)$ the Wasserstein distance on $\mathcal{M}_m$ if $\alpha, \beta \in \mathcal{M}_m$.
\end{remark}

\begin{lemma}\label{differentiability of the energy along the flow}
Consider the curve defined by 
$
    \mu_t \coloneqq \mu + \rho_t - \rho\,,
$
where $\rho_t$ is the heat flow at time $t$ of $\rho_+$. Then,
    the curve $\mu_t$ is absolutely continuous. More precisely, there exists a constant $C>0$ such as, for any $s,t >0$: 
    \begin{equation*}
        W_2(\mu_t,\mu_s) \leq \sqrt{|t-s|} C\,.
    \end{equation*}
\end{lemma}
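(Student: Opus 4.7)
The plan is to exploit the fact that, along this curve, only the piece $\rho_t$ moves while the complementary measure $\mu - \rho$ stays fixed. Concretely, I would glue an identity coupling on the stationary part with any optimal plan between $\rho_s$ and $\rho_t$, reducing the estimate to a $1/2$-Hölder bound along the pure heat flow. Since $\rho \leq \mu$ by assumption, the measure $\mu - \rho$ is nonnegative, so for any optimal plan $\gamma_{s,t}^\star \in \Gamma(\rho_s,\rho_t)$ the measure
\begin{equation}
\pi_{s,t} \coloneqq (\mathrm{id}\times \mathrm{id})_\#(\mu-\rho) + \gamma_{s,t}^\star
\end{equation}
is a nonnegative transport plan between $\mu_s$ and $\mu_t$ in $\mathcal{M}_1$. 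Its quadratic cost reduces to that of $\gamma_{s,t}^\star$ because the first term is concentrated on the diagonal, yielding $W_2^2(\mu_t,\mu_s)\leq W_2^2(\rho_t,\rho_s)$.

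It therefore remains to control $W_2(\rho_t,\rho_s)$ by $C\sqrt{|t-s|}$. The cleanest route, given the explicit formula $\rho_t = K_t \star \rho$, is probabilistic. Writing $m \coloneqq \rho(\mathbb{R}^d)$, let $X_0$ have law $\rho/m$ and $B$ be an independent standard Brownian motion on $\mathbb{R}^d$; then $X_t \coloneqq X_0 + \sqrt{2}\,B_t$ has law $\rho_t/m$ by the very definition of the heat kernel, so the joint law of $(X_s,X_t)$ is an admissible coupling between the normalised measures. Its cost is
\begin{equation}
\mathbb{E}\bigl[|X_t-X_s|^2\bigr] = 2\,\mathbb{E}\bigl[|B_t-B_s|^2\bigr] = 2d\,|t-s|,
\end{equation}
and scaling back by the mass $m$ gives $W_2^2(\rho_t,\rho_s) \leq 2dm\,|t-s|$ (using the identity $W_2^2(m\alpha, m\beta) = m\,W_2^2(\alpha,\beta)$ for probability measures $\alpha,\beta$, which follows from the fact that admissible plans are precisely $m$ times admissible plans between the normalisations).

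Combining the two steps yields $W_2(\mu_t,\mu_s) \leq \sqrt{2d\,\rho(\mathbb{R}^d)}\,\sqrt{|t-s|}$, which gives the desired $1/2$-Hölder estimate with an explicit constant $C$ depending only on the dimension and on the mass of $\rho$. The only point requiring genuine care is the gluing step: one has to verify that $(\mathrm{id}\times\mathrm{id})_\#(\mu-\rho) + \gamma_{s,t}^\star$ has the correct marginals, which is where the hypothesis $\rho \leq \mu$ (so that $\mu - \rho \geq 0$) is used; apart from that, the argument is a short computation. An alternative route, avoiding probability, would be to use the chain rule from Proposition \ref{ChainRule} with the Fisher information and Cauchy-Schwarz, but this requires additional integrability of $|\nabla \log \rho_t|^2$ up to $t=0$, which is not automatic unless $\rho$ itself is regular; the Brownian coupling above bypasses this regularity issue entirely.
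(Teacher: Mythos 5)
Your proof is correct and takes a genuinely different route from the paper for the key estimate. Both proofs share the gluing step: the paper likewise constructs a plan between $\mu_s$ and $\mu_t$ by adding to an optimal plan $\gamma \in \Gamma_0(\rho_s,\rho_t)$ a coupling of $\mu-\rho$ with itself, thereby reducing the problem to bounding $W_2(\rho_t,\rho_s)$; note, however, that the paper writes this extra piece as $(\mu-\rho)^{\otimes 2}$, which does not have the correct marginals (they would be $(1-m)(\mu-\rho)$, not $\mu-\rho$), whereas your $(\mathrm{id}\times\mathrm{id})_\#(\mu-\rho)$ is the right diagonal coupling. Where the two proofs truly diverge is in the second step. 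The paper invokes the EVI inequality for the entropy gradient flow, hence the fact that absolutely continuous gradient-flow curves of a geodesically convex functional are locally $1/2$-H\"older with constant controlled by the dissipated entropy. You instead use the explicit synchronous Brownian coupling $X_t = X_0 + \sqrt{2}\,B_t$, which gives directly $W_2^2(\rho_t,\rho_s) \leq 2d\,\rho(\mathbb{R}^d)\,|t-s|$. Your route is more elementary (no appeal to the abstract EVI machinery of \cite{ambrosio2005gradient}), it produces an explicit constant $C=\sqrt{2d\,\rho(\mathbb{R}^d)}$, and it sidesteps a subtlety in the paper's argument: the EVI-based constant involves $\mathcal{H}(\rho_s)-\mathcal{H}(\rho_t)$, which is delicate to control uniformly as $s\to 0^+$ when $\rho$ has infinite initial entropy (as the singular measures of interest here do), whereas your bound is manifestly uniform for all $s,t\geq 0$. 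The one thing the paper's argument buys in exchange is portability: the EVI route generalises verbatim to a closed Riemannian manifold, while your Brownian coupling uses the specific Gaussian form of the Euclidean heat kernel — though the analogous synchronous coupling of Brownian motions on a manifold would recover the estimate there too.
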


\begin{proof}
Standard results exposed in \cite[Theorem 11.2.8]{ambrosio2005gradient}  show that $K_t \star \rho$ is absolutely continuous for the Wasserstein metric.
We fix some $s,t >0$, and write the EVI equation associated with the heat flow (see \eqref{EVI_Equation}), that states that for all $\alpha$ density measure with the same mass as $\rho$ and for almost every $t>0$: 
\begin{equation*}
    \frac{1}{2} \frac{d}{dt}W_2^2(\rho_t,\alpha) \leq \mathcal{H}(\alpha) - \mathcal{H}(\rho_t)\,.
\end{equation*}
This implies that there exists a constant $C>0$ such that,for any $s,t >0$ 
$
    W_2(\mu_t,\mu_s) \leq \sqrt{|t-s|} C
$.
Let $\gamma$ be an optimal transport plan $\gamma$ between $K_t \star \rho$ and $K_s \star \rho$, for $s,t > 0$.
The plan $\tilde{\gamma} \coloneqq \gamma + (\mu - \rho)^{\otimes 2}$ is a transport plan between $\mu_t$ and $\mu_s$. Thus: 
\begin{equation*}
    W_2(\mu_t,\mu_s) \leq W_2(K_t \star \rho, K_s \star \rho)\,,
\end{equation*}
which proves the lemma.
\end{proof}
Now, we need some estimates on $E_\nu$ along the flow defined above. It is possible thanks to the next lemma. 
\begin{lemma}\label{diff energy}
    The function $t \mapsto E_\nu (\mu_t)$ is differentiable for every $t>0$, and its derivative is given by: 
    \begin{equation}
    \label{deriv energy heat flow}
        \frac{1}{c_d} \frac{d}{dt}E_\nu(\mu_t) = - \left< \rho, K_{2t} \star \rho + K_t \star (\mu - \rho) - K_t \star \nu \right>\,.
    \end{equation}
\end{lemma}

\subsection{Estimates of Mass Transfers in the Diffusion Process}

In this paragraph, we show that for a well-chosen positive measure $\rho$ and for $t$ small enough, the quantity in Formula \eqref{deriv energy heat flow} is strictly negative.
An essential property is the following lemma \cite{Watson1994}.  

\begin{lemma}[Heat kernel estimates, \cite{Watson1994}]\label{lemma:estim heat singular measures}
    Let $\alpha$ and $\beta$ be two positive mutually singular measures.
    Then, as $t \rightarrow 0$, for $\mu$ almost every $x$: 
    $$K_t \star \beta (x) = o(K_t \star \alpha(x))\,,$$
\end{lemma}
\noindent
We use this lemma to derive the following estimate. 
\begin{lemma}\label{modif estim heat singular measures}
    Let $\alpha$ and $\beta$ be two positive mutually singular measures.
    Then, as $t \rightarrow 0$, for $\mu$ almost every $x$: 
    $$K_t \star \beta (x) = o(K_{2t} \star \alpha(x))\,,$$
\end{lemma}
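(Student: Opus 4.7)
The plan is to deduce this refined estimate directly from Lemma \ref{lemma:estim heat singular measures} by a pointwise comparison between the Gaussian heat kernels at times $t$ and $2t$, combined with the positivity of $\alpha$.

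First I would establish a pointwise dimensional bound of the form $K_{2t}(x) \geq c_d\, K_t(x)$, valid for every $x \in \mathbb{R}^d$ and every $t > 0$. A direct computation with the explicit Gaussian density gives
\begin{equation}
    \frac{K_{2t}(x)}{K_t(x)} = 2^{-d/2}\exp\!\left(\frac{\|x\|^2}{8t}\right) \geq 2^{-d/2},
\end{equation}
with equality only at $x = 0$. Integrating this inequality against the positive measure $\alpha$ then yields $K_{2t} \star \alpha(x) \geq 2^{-d/2}\, K_t \star \alpha(x)$ for every $x \in \mathbb{R}^d$ and every $t > 0$.

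From this the conclusion follows immediately: for $\mu$-almost every $x$, I would write
\begin{equation}
    \frac{K_t \star \beta(x)}{K_{2t} \star \alpha(x)} \leq 2^{d/2}\,\frac{K_t \star \beta(x)}{K_t \star \alpha(x)},
\end{equation}
and invoke Lemma \ref{lemma:estim heat singular measures} applied to the same pair $(\alpha,\beta)$ to conclude that the right-hand side tends to zero as $t \to 0$. I do not expect any genuine obstacle: the statement is a soft refinement of the previous lemma, relying only on the fact that the heat kernel at time $2t$ dominates, up to a purely dimensional constant, the heat kernel at time $t$.
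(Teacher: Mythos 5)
Your argument is correct and is essentially identical to the paper's: both establish the pointwise bound $K_t(x) \leq 2^{d/2} K_{2t}(x)$ from the explicit Gaussian formula, integrate it against the positive measure $\alpha$, and then reduce to Lemma \ref{lemma:estim heat singular measures}. No meaningful difference in approach.
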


\begin{proof}
    From the expression of $K_t$, we have 
    $\frac{K_t(x)}{K_{2t}(x)} = 2^{d/2} \exp(-\|x\|^2/8t) \leq 2^{d/2}$,
    from which we deduce 
    $K_t \star \alpha (x) \leq 2^{d/2}K_{2t} \star \alpha (x)$
    and: 
    $$\frac{K_t \star \beta(x)}{K_{2t} \star \alpha(x)} =\frac{K_t \star \alpha(x)}{K_{2t} \star \alpha(x)}  \frac{K_t \star \beta(x)}{K_t \star \alpha(x)} \leq 2^{d/2} \frac{K_t \star \beta(x)}{K_t \star \alpha(x)}\,.$$
     This quantity tends to $0$ from the preceding lemma.
\end{proof}

\subsection{Proof of Theorem \ref{NoLocMin}}

We now prove the main theorem of this section, which is implied by the following proposition.

\begin{proposition}\label{theo:deriv neg mmd}
    Let $\mu$ be a probability measure. We suppose $\mu \neq \nu$, and write $\mu - \nu = \mu_+ - \nu_-$ the unique associated Hahn-Jordan decomposition. 
    Then there exists a $\mu_+$ measurable set $A$ such as $\mu_+(A) \geq \mu_+(\mathbb{R}^d)/2$ and $t_0>0$ such as, for the curve 
    $\mu_t = \mu + K_t \star \mu_{+|A} - \mu_{+|A}$,
    then for all $t<t_0$ we get $\frac{d}{dt}E_\nu(\mu_t) < 0$.
\end{proposition}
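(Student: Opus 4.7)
The plan is to exploit Lemma \ref{diff energy} to rewrite the derivative in a form where the ``common part'' $\sigma \coloneqq \mu - \mu_+ = \nu - \nu_-$ cancels, and then to pointwise-dominate $K_t\star\nu_-$ by $K_t\star\mu_+$ on a sufficiently large set $A$. Setting $\rho = \mu_{+|A}$, we have $\mu - \rho = \sigma + \mu_{+|A^c}$ and $\nu = \sigma + \nu_-$, so that $K_t\star(\mu-\rho) - K_t\star\nu = K_t\star\mu_{+|A^c} - K_t\star\nu_-$, and Lemma \ref{diff energy} reduces to
\begin{equation*}
\frac{1}{c_d}\frac{d}{dt}E_\nu(\mu_t) = -\int \bigl(K_{2t}\star \mu_{+|A} + K_t\star \mu_{+|A^c} - K_t\star \nu_-\bigr)\,d\mu_{+|A}.
\end{equation*}
Thus it suffices to produce a measurable set $A$ with $\mu_+(A) \geq \mu_+(\mathbb{R}^d)/2$ and a $t_0 > 0$ so that the integrand is strictly positive on $A$ for all $t \in (0, t_0]$.

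To construct $A$, I would apply Lemma \ref{lemma:estim heat singular measures} to the mutually singular pair $\alpha = \mu_+$, $\beta = \nu_-$: for $\mu_+$-a.e.\ $x$, $K_t\star\nu_-(x)/K_t\star\mu_+(x) \to 0$ as $t \to 0$. Fix $\epsilon \in (0, 2^{-d/2})$ and set
\begin{equation*}
A_n \coloneqq \bigl\{x \in \mathbb{R}^d : K_t\star\nu_-(x) \leq \epsilon\, K_t\star\mu_+(x)\ \text{for all}\ t \in (0, 1/n]\bigr\}.
\end{equation*}
The map $t \mapsto K_t\star\mu(x)$ is continuous on $(0,\infty)$ for any finite positive measure $\mu$, so the defining condition over $t \in (0,1/n]$ can be restricted to rationals; hence $A_n$ is measurable. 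The family $(A_n)$ is nondecreasing and, by the pointwise convergence just recalled, $\mu_+(\bigcup_n A_n) = \mu_+(\mathbb{R}^d)$, so for $n$ sufficiently large $\mu_+(A_n) \geq \mu_+(\mathbb{R}^d)/2$. Fix such $n$ and set $A \coloneqq A_n$, $t_0 \coloneqq 1/n$.

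The key elementary input is the pointwise comparison of heat kernels: from the explicit formulas, $K_{2t}(x)/K_t(x) = 2^{-d/2}\, e^{|x|^2/8t} \geq 2^{-d/2}$, hence $K_{2t}\star\mu_{+|A}(x) \geq 2^{-d/2}\, K_t\star\mu_{+|A}(x)$ for every $x$. Splitting $K_t\star\mu_+ = K_t\star\mu_{+|A} + K_t\star\mu_{+|A^c}$ and using the defining inequality of $A$, for $x \in A$ and $t \in (0, t_0]$,
\begin{align*}
K_{2t}\star\mu_{+|A}(x) + K_t\star\mu_{+|A^c}(x) - K_t\star\nu_-(x)
&\geq 2^{-d/2}\, K_t\star\mu_{+|A}(x) + K_t\star\mu_{+|A^c}(x) - \epsilon\, K_t\star\mu_+(x) \\
&= (2^{-d/2} - \epsilon)\, K_t\star\mu_{+|A}(x) + (1 - \epsilon)\, K_t\star\mu_{+|A^c}(x) \; > \; 0,
\end{align*}
since $2^{-d/2} - \epsilon > 0$ by the choice of $\epsilon$ and $K_t\star\mu_{+|A}(x) > 0$ everywhere (as $\mu_{+|A}$ is a nonzero positive measure and $K_t > 0$; note $\mu_+(\mathbb{R}^d) > 0$ because $\mu \neq \nu$). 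Integrating against $d\mu_{+|A}$ gives a strictly positive integral, hence $\frac{d}{dt}E_\nu(\mu_t) < 0$ for $t \in (0, t_0]$.

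The only delicate point is the measurability of $A_n$, which is handled by continuity of $t \mapsto K_t\star\mu(x)$ and the countable-sup reduction; the rest of the proof is a clean combination of the Watson-type domination provided by Lemma \ref{lemma:estim heat singular measures} and the elementary bound $K_{2t} \geq 2^{-d/2} K_t$, which plays the role of comparing the self-interaction $K_{2t}\star\mu_{+|A}$ against the ``mass'' $K_t\star\mu_{+|A}$ used to dominate $K_t\star\nu_-$.
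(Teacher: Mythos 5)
Your proof is correct, and it follows the same basic strategy as the paper (reduce the derivative formula via the Hahn--Jordan decomposition, apply Watson's lemma to get pointwise domination on a set $A$ of at least half the $\mu_+$-mass via an Egorov-type $\sigma$-additivity argument, then conclude pointwise positivity of the integrand). However, you handle one point more carefully than the paper does, and the distinction is worth flagging. The paper applies its Lemma~\ref{modif estim heat singular measures} to obtain $K_t\star\nu_-(x)<\tfrac12 K_{2t}\star\mu_+(x)$ on $A$, writes $\tfrac{d}{dt}E_\nu(\mu_t)=-c_d\langle\mu_{+|A},\,K_{2t}\star\mu_{+|A}-K_t\star\nu_-\rangle$ (dropping the $K_t\star\mu_{+|A^c}$ term from Lemma~\ref{diff energy}), and then passes to $\le -\tfrac12 c_d\langle\mu_{+|A},K_{2t}\star\mu_{+|A}\rangle$. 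That last step requires $K_t\star\nu_-\le\tfrac12 K_{2t}\star\mu_{+|A}$ on $A$, whereas the domination lemma only gives $K_t\star\nu_-\le\tfrac12 K_{2t}\star\mu_+ = \tfrac12 K_{2t}\star\mu_{+|A}+\tfrac12 K_{2t}\star\mu_{+|A^c}$. The leftover piece $\tfrac12 K_{2t}\star\mu_{+|A^c}$ cannot in general be absorbed by the dropped $K_t\star\mu_{+|A^c}$ term, because $K_{2t}/K_t = 2^{-d/2}e^{|\cdot|^2/8t}$ is unbounded; so when $\mu_+(A^c)>0$ the paper's chain of inequalities has a genuine gap.

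Your proof avoids this by keeping the $K_t\star\mu_{+|A^c}$ term explicit, using the \emph{original} Watson domination $K_t\star\nu_- = o(K_t\star\mu_+)$ (Lemma~\ref{lemma:estim heat singular measures}) rather than the $K_{2t}$ variant, and applying the elementary pointwise bound $K_{2t}\ge 2^{-d/2}K_t$ only to the $\mu_{+|A}$ piece. The resulting lower bound
\[
K_{2t}\star\mu_{+|A}+K_t\star\mu_{+|A^c}-K_t\star\nu_-\;\ge\;(2^{-d/2}-\epsilon)\,K_t\star\mu_{+|A}+(1-\epsilon)\,K_t\star\mu_{+|A^c}\;>\;0
\]
holds for $x\in A$ and $t\le t_0$ with no hidden sign issues. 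This is a correct and indeed tighter version of the paper's argument. One minor remark: for later use in Lemma~\ref{SingMeasureCrit} the paper wants the quantitative estimate $\tfrac{d}{dt}E_\nu(\mu_t)\le -C\langle\mu_{+|A},K_{2t}\star\mu_{+|A}\rangle$ (not just strict negativity), and your computation in fact delivers exactly this with $C=c_d(2^{-d/2}-\epsilon)$ once you convert back via $K_t\star\mu_{+|A}\ge 2^{-d/2}K_{2t}\star\mu_{+|A}$; it would be worth stating that explicitly since it is the form reused downstream.
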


\begin{proof}
    The preceding lemma gives the following result: for $\mu_{+}$ almost every $x$, there exists $t_x > 0$ such as, for all $t<t_x$ : $K_t \star \nu_- (x) < \frac{1}{2} K_{2t} \star \mu_+ (x)$.
    Let us consider the set sequence defined for $N >0$ by: 
    $$X_N \coloneqq \left\{ x \in \operatorname{supp}(\mu_+) | t_x \geq 1/N \right\}\,.$$
    This is a growing sequence for inclusion, and it verifies 
    $\operatorname{supp}(\mu_+) = \bigcup_{N \in \mathbb{N}^*} X_N$.
    Now if we write $\widetilde{X_1} \coloneqq X_1 $ $\widetilde{X_{N+1}} \coloneqq X_{N+1} \setminus {X_N}$, we get a countable sequence of disjoint sets whose union is of total mass for $\mu_+$. This implies, by $\sigma$-additivity, that there exists an integer $N_0$ such that:
$$\mu_+\left(\bigcup_{N=1}^{N_0} X_N \right) \geq \mu_+(\mathbb{R}^d)/2\,.$$
    Now, we take $A \coloneqq \bigcup_{N=1}^{N_0} X_N$ and $t_0 = 1/N_0$, which gives, for $t < t_0$: 
    \begin{align}
    \label{ineg deriv}
        \frac{d}{dt}E_\nu (\mu_t) &= -c_d \left< \mu_{+|A},  K_{2t} \star \mu_{+|A} - K_t \star \nu_- \right> \\
        & \leq -\frac{1}{2}c_d \left< \mu_{+|A}, K_{2t} \star \mu_{+|A} \right>  \,.
    \end{align}
    In particular, we get the conclusion $\frac{d}{dt}E_\nu (\mu_t) < 0$.
\end{proof}

\subsection{Dimension of Measure and Critical Points}

In Theorem \ref{NoLocMin}, we saw that for any measure $\mu$ distinct from the target measure $\nu$ we could find an absolutely continuous curve for the Wasserstein distance $\mu_t$ such as $t \mapsto E_\nu(\mu_t)$ was strictly decreasing near 0. However, it is not sufficient to guarantee that the JKO steps \eqref{JKOSteps} do not remain stationary at $\mu$. Our Theorem \ref{eq measure open} states that if $\mu$ is a critical point, then on the interior of its support $\mu$ is equal to $\nu$. We will prove that if the part of $\mu$ singular to $\nu$ is supported on sets singular enough, then $\mu$ cannot be a critical point as in Definition \ref{def: crit point}. In analogy to the finite dimensions and regular case, it corresponds to the fact that it cannot be a second-order critical point.

To quantify the singularity of the support of a measure, we use geometric measure theory properties, mainly the growth of $\mu(B(x,r))$. If it grows faster than $r^d$, we prove that the heat diffusion makes the energy $E_\nu$ decrease fast enough to compensate for the growth of $W_2^2(\mu, K_t \star \mu)$, so that $\mu$ is not a point where JKO steps get stuck.  To do this, we use a more precise version of Lemma \ref{lemma:estim heat singular measures}, see \cite{Watson1994}:

\begin{lemma}\label{lemma:estim heat general}
    Let $\mu$ be a positive measure in $\mathbb{R}^d$ and $q \in [0,n]$. Then, there exists a universal constant $c_{d,q}$ only dependent on $d$ and $q$ such as, for all $x \in \mathbb{R}^d$: 
    \begin{align}
        \underset{r \rightarrow 0}{\liminf}\, r^{-q}\mu(B(x,r)) &\leq c_{d,q} \underset{t \rightarrow 0}{\liminf}\, t^{(d-q)/2} K_t \star \mu (x) \\
        &\leq c_{d,q} \underset{t \rightarrow 0}{\limsup }\, t^{(d-q)/2} K_t \star \mu (x) \leq \underset{r \rightarrow 0}{\limsup }\, r^{-q}\mu(B(x,r))\,.
    \end{align}
\end{lemma}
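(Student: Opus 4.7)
The unifying idea is to rewrite $K_t\star\mu(x)$ as a one-dimensional integral against the radial distribution function $F(r)\coloneqq\mu(B(x,r))$, so that the two asymptotics in the lemma compare the same quantity at different scales. Assume first that $\mu$ has no atom at $x$ (the atomic case is trivial since both sides become infinite). Since $K_t$ is radial and strictly decreasing, integration by parts in the radial representation of the convolution yields
\begin{equation}
K_t\star\mu(x) \;=\; (4\pi t)^{-d/2}\int_0^\infty F(r)\,\frac{r}{2t}\,e^{-r^2/4t}\,dr,
\end{equation}
and the change of variable $u = r^2/(4t)$ turns this into the key identity
\begin{equation}
t^{(d-q)/2}\,K_t\star\mu(x) \;=\; \frac{2^q}{(4\pi)^{d/2}}\int_0^\infty \frac{F(2\sqrt{tu})}{(2\sqrt{tu})^q}\,u^{q/2}\,e^{-u}\,du,
\end{equation}
whose total weight is $\int_0^\infty u^{q/2}e^{-u}\,du = \Gamma(q/2+1)$. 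This already pins down the universal constant, namely $c_{d,q}=(4\pi)^{d/2}/(2^q\Gamma(q/2+1))$.

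The first inequality follows from Fatou's lemma applied to the identity above: for every fixed $u>0$ one has $2\sqrt{tu}\to 0$ as $t\to 0$, so
\begin{equation}
\liminf_{t\to 0}\, t^{(d-q)/2}K_t\star\mu(x)\;\geq\;\frac{2^q\Gamma(q/2+1)}{(4\pi)^{d/2}}\,\liminf_{r\to 0}\frac{F(r)}{r^q},
\end{equation}
which is exactly $c_{d,q}^{-1}\liminf_{r\to 0} r^{-q}\mu(B(x,r))$. The middle inequality in the lemma is the trivial $\liminf\le\limsup$.

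For the third (reverse) inequality, fix $L>\limsup_{r\to 0}F(r)/r^q$ and choose $r_0>0$ with $F(r)/r^q\le L$ for all $r\le r_0$. Split the integral in the key identity at $u_0(t)=r_0^2/(4t)$. On $\{u\le u_0(t)\}$ the ratio $F(2\sqrt{tu})/(2\sqrt{tu})^q$ is bounded pointwise by $L$, contributing at most $L\,\Gamma(q/2+1)$. On $\{u> u_0(t)\}$ one uses the crude bound $(2\sqrt{tu})^{-q}\le r_0^{-q}$ to reduce matters to showing that $\int_{u_0(t)}^\infty F(2\sqrt{tu})\,u^{q/2}e^{-u}\,du\to 0$; this holds because $u_0(t)\to\infty$ while $F$ grows at most polynomially, so the super-exponential factor $e^{-u}$ dominates. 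Taking the $\limsup$ and then letting $L\downarrow\limsup_{r\to 0}F(r)/r^q$ closes the bound.

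\textbf{Main obstacle.} The subtle point is dominating the tail $\{u>u_0(t)\}$ uniformly in $t$ in the $\limsup$ estimate. For probability measures this is immediate since $F\le 1$, but in the general locally finite case one must combine the polynomial growth of $F$ with the super-exponential decay of $e^{-u}$ after the integrand has been pushed onto a lower tail that escapes to infinity. The remainder of the proof is careful bookkeeping of the constants generated by the change of variable $u = r^2/(4t)$.
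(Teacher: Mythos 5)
The paper does not prove this lemma; it is cited from Watson \cite{Watson1994}, so there is no in-paper proof to compare against. Your argument is a correct self-contained derivation and, in my view, the natural one: integration by parts in the radial variable to replace $d\mu$ by $F(r)\coloneqq\mu(B(x,r))$, the substitution $u=r^2/(4t)$ to decouple the two scales, Fatou for the $\liminf$, and a split of the $u$-integral at $u_0(t)=r_0^2/(4t)$ for the $\limsup$. The constant $c_{d,q}=(4\pi)^{d/2}/\bigl(2^q\,\Gamma(q/2+1)\bigr)$ you extract is consistent ($c_{d,d}=\omega_d$ for Lebesgue measure, $c_{d,0}=(4\pi)^{d/2}$ for a Dirac at $x$).

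Two technical points deserve to be stated rather than waved at, both hinging on the same hypothesis. First, the integration by parts that produces your equation (1) has a boundary term $e^{-r^2/4t}F(r)\big|_0^\infty$: vanishing at $r=0$ uses $\mu(\{x\})=0$ (which you dispose of), while vanishing at $r=\infty$ is \emph{not} automatic for an arbitrary positive measure. Second, your treatment of the tail $\{u>u_0(t)\}$ invokes ``polynomial growth of $F$,'' which a general positive measure need not have. Both issues are resolved simultaneously by working under the assumption that $K_{t_0}\star\mu(x)<\infty$ for some $t_0>0$ — the only non-degenerate case, since otherwise the middle quantity is $+\infty$ for all $t$ and the chain of inequalities reduces to showing $\limsup_{r\to0}r^{-q}F(r)=+\infty$, which follows by contraposition from the easy implication ``$F(r)\lesssim r^q$ near $0$ implies $K_t\star\mu(x)<\infty$.'' Once $K_{t_0}\star\mu(x)<\infty$, one has $F(r)e^{-r^2/4t}\to 0$ as $r\to\infty$ for every $t<t_0$, which validates the boundary term; and for $t\le t_0/2$ one can bound $\bigl(\tfrac{r^2}{4t}\bigr)^{q/2}\tfrac{r}{2t}e^{-r^2/4t}\le C_q\,r_0^{-1}\,e^{-r^2/4t_0}$ on $r\ge r_0$, so the tail is dominated by $C\int_{r_0}^\infty F(r)e^{-r^2/4t_0}\,dr<\infty$ uniformly in small $t$, and dominated convergence sends it to zero. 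With those lines added, the proof is complete; for the paper's application the measures involved are finite, so your shortcut $F\le\mu(\mathbb{R}^d)$ already suffices.
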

\noindent
\textbf{How to interpret this result?} This result allows, in some cases, to get precise estimates on the decay of $K_t \star \mu$. 
To illustrate, if $\mu$ has a continuous bounded density function $f$ with respect to the Lebesgue measure, then, with $q = d$, we get that 
$
    \underset{r \rightarrow 0}{\lim}\, r^{-d}\mu(B(x,r)) = f(x)\,,
$
which gives: 
\begin{equation*}
    \underset{t \rightarrow 0}{\lim}\, K_t \star \mu (x) = f(x)/c_{d,q}\,.
\end{equation*}
If $\mu$ charges more singular sets than open sets, for example if there exists some $\delta>0$ such as, for all $x \in A$, where $\mu(A)>0$ 
$
    \underset{r \rightarrow 0}{\lim}\, r^{-(d-\delta)}\mu(B(x,r)) = f(x)
$,
then, for all $x \in A$ we have 
$
    \underset{t \rightarrow 0}{\lim}\, t^{\delta / 2} K_t \star \mu (x) = f(x)/c_{d,q}\,.
$
That is, when $t \rightarrow 0$: 
\begin{equation*}
    K_t \star \mu (x) \sim \frac{f(x)}{c_{d,q}} \frac{1}{t^{\delta/2}}\,.
\end{equation*}


Let $\mu$ and $\nu$ be two probability measures with finite Coulomb energy. Once again we write the Hahn-Jordan decomposition $\mu - \nu = \mu_+ -\nu_-$. 
From the result of Section \ref{SecCriticalPoints}, we know that at a critical point that is not the global minimum, $\mu_+$  has an empty interior support. 

We study what happens if $\mu_+$ is singular in the sense that the growth of $\mu_+(B(x,r))$ on the support of $\mu_+$ as $r$ tends to 0.
We prove the following lemma: 

\begin{lemma}\label{SingMeasureCrit}
    If there exists a set $A \subset \mathbb{R}^d$ such that $\mu_+(A)>0$, some positive number $0<\delta<2$, a constant $C > 0$ and $r_0>0$ such that, for all $0\leq r \leq r_0$ and $\mu_+$ almost every $x \in A$: 
    $$\mu_+(B(x,r)) \geq C r^{d-\delta}\,,$$
    then $\mu$ is not a critical point as in Definition \ref{def: crit point}.
\end{lemma}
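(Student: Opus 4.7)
The plan is to extend the heat-flow perturbation argument of Theorem \ref{NoLocMin}, using the growth hypothesis to turn the qualitative decrease of $E_\nu$ into a quantitative bound of order $t^{1-\delta/2}$, which strictly dominates the quadratic Wasserstein cost of order $t$ near $t=0$. Concretely, I would consider the curve $\mu_t := \mu + K_t\star\rho - \rho$ for a well-chosen $\rho \leq \mu_+$ and show that for every $\tau>0$ there exists $t>0$ with
\begin{equation*}
E_\nu(\mu_t) + \tfrac{1}{2\tau} W_2^2(\mu,\mu_t) < E_\nu(\mu),
\end{equation*}
which directly contradicts Definition \ref{def: crit point}.

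First I would use the Lebesgue density theorem to transfer the growth condition $\mu_+(B(x,r)) \geq C r^{d-\delta}$ to the restriction $\mu_{+|A}$ for $\mu_{+|A}$-a.e.\ $x \in A$, and then invoke Lemma \ref{lemma:estim heat general} with $q=d-\delta$ to obtain $\liminf_{s\to 0} s^{\delta/2} K_{2s}\star \mu_{+|A}(x) > 0$ pointwise a.e. At the same time, Lemma \ref{modif estim heat singular measures} applied to the mutually singular pair $(\mu_{+|A}, \nu_-)$ yields $K_s\star \nu_-(x) = o(K_{2s}\star \mu_{+|A}(x))$ $\mu_{+|A}$-a.e. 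Egorov's theorem then produces a measurable subset $B \subseteq A$ with $\mu_+(B) > 0$ and a threshold $s_0 > 0$ such that, setting $\rho := \mu_{+|B}$, one has uniformly for $x \in B$ and $s \leq s_0$
\begin{equation*}
K_{2s}\star \rho(x) \geq c\, s^{-\delta/2}, \qquad K_s\star \nu_-(x) \leq \tfrac{1}{2} K_{2s}\star\rho(x).
\end{equation*}

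Plugging these bounds into the derivative formula \eqref{deriv energy heat flow} and rewriting $K_s\star(\mu-\rho)-K_s\star\nu = K_s\star(\mu_+-\rho) - K_s\star\nu_-$ with $\mu_+-\rho \geq 0$, I expect to obtain
\begin{equation*}
-\tfrac{1}{c_d}\frac{d}{ds}E_\nu(\mu_s) \geq \tfrac{1}{2}\langle \rho, K_{2s}\star\rho\rangle \geq \tfrac{c}{2}\,\mu_+(B)\, s^{-\delta/2}
\end{equation*}
for all $s \leq s_0$. Since $\delta < 2$, the right-hand side is integrable at $0$, and integrating (exploiting monotonicity of $s \mapsto E_\nu(\mu_s)$ together with lower semicontinuity to handle continuity at $s=0$) yields $E_\nu(\mu) - E_\nu(\mu_t) \geq c''\, t^{1-\delta/2}$ for $t \leq s_0$. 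Combining with the estimate $W_2^2(\mu,\mu_t) \leq C t$ from Lemma \ref{differentiability of the energy along the flow}, the JKO value at $\mu_t$ is bounded above by $E_\nu(\mu) - c''\, t^{1-\delta/2} + \tfrac{C}{2\tau}\, t$, which is strictly less than $E_\nu(\mu)$ for $t$ sufficiently small, for any fixed $\tau>0$.

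The main technical hurdle is the upgrade of the pointwise asymptotic statements of Lemmas \ref{lemma:estim heat general} and \ref{modif estim heat singular measures} into \emph{uniform} estimates on a set of positive $\mu_+$-mass; this is where the Lebesgue density theorem and Egorov's theorem play an essential role. Once the uniform lower bound $K_{2s}\star \rho \gtrsim s^{-\delta/2}$ on $B$ is secured, the remainder is a direct comparison of scales, and the two-sided restriction $0<\delta<2$ is used twice: $\delta>0$ gives the strict domination $t^{1-\delta/2}\gg t$ as $t \to 0^+$, while $\delta<2$ ensures that $s^{-\delta/2}$ is integrable at the origin.
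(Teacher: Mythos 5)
Your proposal is correct and follows essentially the same route as the paper: perturb $\mu$ by the heat flow of a piece of $\mu_+$, combine the mutual-singularity estimate with the growth hypothesis (via Lemma \ref{lemma:estim heat general}) to get a quantitative decrease $\frac{d}{dt}E_\nu(\mu_t)\lesssim -t^{-\delta/2}$, integrate to get an energy drop of order $t^{1-\delta/2}$, and compare with the $O(t)$ Wasserstein cost. The only cosmetic differences are that you upgrade the a.e.\ asymptotics to uniform bounds via Egorov and Lebesgue density where the paper invokes the $\sigma$-additivity argument recycled from Proposition \ref{theo:deriv neg mmd}, and you conclude by picking $t$ small for each fixed $\tau$ instead of optimizing $t_\tau = (2\tau)^{2/\delta}$ explicitly; both are equivalent for disproving criticality.
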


\begin{proof}
    In this proof, we denote by $C$ any strictly positive constant that does not depend on $t$ or $\tau$.
    Once again, we consider the Wasserstein curve defined by $\mu_0 = \mu$ and 
    $\mu_t = \mu + K_t \star \mu_{+|A} - \mu_{+|A}$. 
    Using Lemma \ref{lemma:estim heat general}, we get the existence of $t_0 > 0$ such that, for all $0<t<t_0$ and $\mu_+$ almost every $x \in A$
    \begin{equation}
    \label{decay heat density}
        K_{2t} \star \mu_+(x) \geq \frac{C}{t^{\delta/2}}\,.
    \end{equation}
    Moreover, by the same reasoning as in the proof of Theorem \ref{theo:deriv neg mmd}, we can suppose, if we take a subset of $A$, that there exists $t_1>0$ such as, for any $0<t<t_1$: 
    \begin{equation*}
        \frac{d}{dt}E_\nu(\mu_t) \leq - C\left<\mu_{+|A}, K_{2t} \star \mu_{+|A} \right>\,.
    \end{equation*}
    Now, using Formula \eqref{decay heat density}, this gives along $\mu_t$,  
    $
        \frac{d}{dt}E_\nu(\mu_t) \leq - Ct^{-\delta/2}
    $.
    From the EVI inequality associated with the entropy gradient flow, we get the existence of a constant $C$ such that
    $
        \frac{d}{dt}W_2^2(\mu_0,\mu_t) \leq C
    $.
    Combining these inequalities, we get an estimate on the derivative of our proximal functional $E_\nu^{\tau, \mu} \coloneqq E_\nu + \frac{1}{2\tau}W_2^2(\mu_0, \cdot)$: 

    \begin{equation*}
        \frac{d}{dt} E_\nu^{\tau, \mu} (\mu_t) \leq C \left(-t^{-\delta/2} + \frac{1}{2\tau}\right)\,.
    \end{equation*}
    This provides, integrating from $0$ to $t$, as $\delta/2<1$: 
    \begin{equation}
    \label{var fct prox}
        E_\nu^{\tau, \mu}(\mu_t) - E_\nu^{\tau, \mu}(\mu_0) \leq C \left(-\frac{2t^{-\delta/2 + 1}}{2-\delta} + \frac{t}{2\tau}\right)\,.
    \end{equation}
    Optimizing this quantity over $t>0$, we get an optimal 
    $t_\tau \coloneqq (2\tau)^{2/\delta}$, smaller than $t_0, t_1$ if $\tau$ is small enough so that all our inequalities are true for $t_\tau$. Injecting $t_\tau$ in equation \eqref{var fct prox} we get: 
    \begin{equation*}
        E_\nu^{\tau, \mu}(\mu_{t_\tau}) - E_\nu^{\tau, \mu}(\mu_0) \leq -C \tau^{\frac{2-\delta}{\delta}} \,,
    \end{equation*}
    concluding the proof since:
    \begin{equation*}
        \underset{\rho \in \mathcal{P}_2(\mathbb{R}^d)}{\min} \, E_\nu^{\tau, \mu}(\rho)- E_\nu^{\tau, \mu}(\mu_0) \leq \underset{t >0}{\min} \, E_\nu^{\tau, \mu}(\mu_t)- E_\nu^{\tau, \mu}(\mu_0) \leq -C \tau^{\frac{2-\delta}{\delta}} \, .\qedhere
    \end{equation*}
\end{proof}
This lemma is used to prove the main result of this subsection.
\begin{theorem}\label{ThSingMeasCritPts}
    Let us assume there exists a set $A$ such that $\mu_+(A)>0$ and such that $\mu_{+|A}$ has Minkowski dimension less than $d-\delta$ (see \cite{Heurteaux2007}) for some $0<\delta\leq 1$.
    Then $\mu$ is not a critical point as in Definition \ref{def: crit point}.
\end{theorem}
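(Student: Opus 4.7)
The plan is to reduce the theorem to the preceding \Cref{SingMeasureCrit} by producing, from the Minkowski dimension hypothesis, a subset $A' \subset A$ of positive $\mu_+$-measure on which the required pointwise lower density bound $\mu_+(B(x,r)) \geq C r^{d-\delta}$ holds uniformly in $r$ below some common radius $r_0$.

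First, I would translate the global dimensional assumption into a pointwise upper local dimension bound. Using the variational characterization of the Minkowski dimension of a measure from \cite{Heurteaux2007}, the hypothesis $\overline{\dim}_M(\mu_{+|A}) < d - \delta$ yields that for $\mu_{+|A}$-a.e.\ $x$,
\begin{equation*}
    \overline{d}(x) \;:=\; \limsup_{r \to 0^+} \frac{\log \mu_{+|A}(B(x,r))}{\log r} \;<\; d - \delta .
\end{equation*}
Unfolding the strict inequality (and using that $\log r < 0$ for $r < 1$), for $\mu_{+|A}$-a.e.\ $x \in A$ there exists $r(x) > 0$ such that $\mu_+(B(x,r)) \geq \mu_{+|A}(B(x,r)) \geq r^{d-\delta}$ for every $0 < r \leq r(x)$.

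Second, I would pass from this $x$-dependent radius to a uniform one by a standard exhaustion argument. Define, for each integer $N \geq 1$,
\begin{equation*}
    A_N \;:=\; \bigl\{\, x \in A \,:\, \mu_+(B(x,r)) \geq r^{d-\delta} \text{ for all } 0 < r \leq 1/N \,\bigr\} .
\end{equation*}
The sequence $(A_N)$ is increasing and its union covers $A$ up to a $\mu_+$-null set, by the pointwise statement above. Countable subadditivity gives an $N_0$ with $\mu_+(A_{N_0}) > 0$. Setting $A' := A_{N_0}$, $C := 1$, and $r_0 := 1/N_0$, the triple $(A', C, r_0)$ satisfies the hypothesis of \Cref{SingMeasureCrit} (with the same $\delta$, which lies in $(0,1] \subset (0,2)$), and the conclusion that $\mu$ is not a Wasserstein critical point follows.

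The only non-routine step is the first one: the passage from the Minkowski dimension of the measure $\mu_{+|A}$ to a pointwise upper local dimension bound $\mu_{+|A}$-a.e. Depending on the precise definition adopted in \cite{Heurteaux2007} (the essential supremum of $\overline{d}$, an infimum over full-measure sets of their box-counting dimension, or a packing-type variant), this implication is either immediate or requires a short approximation: pick any $s$ strictly between $\overline{\dim}_M(\mu_{+|A})$ and $d - \delta$, approximate $\mu_{+|A}$ by its restriction to a full-measure set of Minkowski dimension at most $s$, and conclude $\overline{d}(x) \leq s < d - \delta$ almost everywhere. Apart from this translation, every step reduces to countable additivity and to \Cref{SingMeasureCrit}, so no further analytic work is needed.
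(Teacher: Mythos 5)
Your proposal is correct and follows essentially the same route as the paper: convert the dimension hypothesis into a pointwise lower density bound $\mu_+(B(x,r)) \geq r^{d-\delta}$ on a set of positive measure, use a $\sigma$-additivity exhaustion to obtain a uniform radius $r_0$, and invoke Lemma~\ref{SingMeasureCrit}. If anything, your version is slightly tighter: you pass directly from $\overline{d}(x) < d - \delta$ to $\mu_+(B(x,r)) \geq r^{d-\delta}$, whereas the paper introduces an auxiliary $\varepsilon > 0$ and obtains the weaker exponent $d-\delta+\varepsilon$ (which still suffices after replacing $\delta$ with $\delta-\varepsilon$ in the lemma), and you also spell out the passage from the global dimension of $\mu_{+|A}$ to the pointwise $\mu_{+|A}$-a.e.\ bound, which the paper takes for granted.
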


\begin{proof}
    The local Minkowski dimension of a measure at $x$ is defined, if the limit exists, as
    \begin{equation*}
\operatorname{dim}_{\mathcal{M}}^x(\mu) \coloneqq \underset{r \rightarrow 0}{\lim }\, \frac{\log(\mu(B(x,r)))}{\log(r)}\,.
    \end{equation*}
    If $\mu_+$ charges some set $A$ such as, for all $x \in A$, $\operatorname{dim}_{\mathcal{M}}^x(\mu)$ exists and is less than $d-\delta$, then for every $\varepsilon > 0$ there exists a radius $r_x$ such that for any $r<r_x$, $\mu(B(x,r)) \geq r^{d-\delta + \varepsilon}$.
    Using the same $\sigma$-additivity arguments as in the proof of Proposition \ref{theo:deriv neg mmd}, we prove we can choose a uniform $r_0$ by considering a subset of $A$ of non-zero measure for $\mu_+$.
    This is exactly the hypothesis of Lemma \ref{SingMeasureCrit}, concluding the proof.
\end{proof}
\begin{remark}
    The condition on the Minkowski dimension is satisfied if, for example, $\mu_+$ is absolutely continuous with respect to the volume measure on some manifold of dimension $d-1$.\\
    Note that the result is stated only for $\delta \leq 1$ instead of $\delta < 2$ (as assumed in the lemma). This is explained by the fact that for $\delta > 1$, the energy functional is infinite. In such a case, it is straightforward to prove from the very definition that such a measure (of infinite energy) cannot be a critical point.
\end{remark}

\par{\textbf{Comments on possible critical points.}}
Our main goal in this section was to prove that there is no local minima apart from the global one. However, it does not prevent the existence of critical points. Passing by, we were able to rule out some singular measures from being critical points. Indeed, we proved that if the current measure has a Minkowski dimension strictly less than the ambient one, then it cannot be a critical point as defined in Section \ref{SecCriticalPoints}. However, there are still many candidates for critical points such as measures proportional to $\mathbbm{1}_{A}$ where $A$ is a positive Lebesgue measure closed set of empty interior such as a fat Cantor set.
A complete characterization of critical points, and the existence of a nontrivial example, is left open for future works.

\subsection{Extension to Riemannian Manifolds}

Recall that the heat kernel is defined on a general Riemannian manifold.

\begin{proposition}[Heat kernel on a manifold]
    Let $(M,g)$ be a Riemannian manifold (it doesn't need to be compact). Then we can define its heat kernel $K : (0,\infty) \times M \times M \rightarrow \mathbb{R}$ as the smallest positive fundamental solution of the heat equation, meaning that for any $y \in M$:
    \begin{equation*}
    \left\{
        \begin{array}{ll}
             \partial_t K = \Delta_x K  \\
             K(t,\cdot,y) \underset{t \rightarrow 0}{\rightarrow} \delta_y \,.
        \end{array}
        \right.
    \end{equation*}
\end{proposition}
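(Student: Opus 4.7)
The plan is to construct $K$ via an exhaustion argument, following the classical approach of Dodziuk. Choose an increasing sequence of relatively compact open sets $\Omega_n \Subset M$ with smooth boundaries such that $\bigcup_n \Omega_n = M$. On each $\Omega_n$, standard parabolic theory on a smooth bounded domain (e.g.\ via the spectral decomposition of the Dirichlet Laplacian, or via a parametrix built from the Euclidean heat kernel in normal coordinates together with a Levi-type iteration) yields a Dirichlet heat kernel $K_n \in C^\infty((0,\infty)\times\Omega_n\times\Omega_n)$ which is positive and symmetric in $(x,y)$, satisfies $\partial_t K_n=\Delta_x K_n$, vanishes on $\partial\Omega_n$, and has initial datum $\delta_y$.

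The next step is to extend $K_n$ by zero outside $\Omega_n\times\Omega_n$ and to prove monotonicity: for $n\le m$, the parabolic maximum principle applied to the difference $K_m-K_n$ on $(0,\infty)\times\Omega_n$ (where $K_m-K_n \ge 0$ on the parabolic boundary, using positivity of $K_m$ on $\partial\Omega_n$ and equality of initial data) yields $K_n\le K_m$. Hence the pointwise limit $K(t,x,y)\defeq\lim_n K_n(t,x,y)$ exists in $[0,+\infty]$. To show $K$ is finite, note that $\int_{\Omega_n} K_n(t,x,y)\,d\mathrm{vol}(x)\le 1$ by the fact that the Dirichlet semigroup is sub-Markovian; together with standard interior parabolic Harnack/Schauder estimates applied to $K_n$ on compactly contained subsets, this yields uniform local bounds on $K_n$ and all its derivatives, so $K$ is smooth and $\partial_t K=\Delta_x K$ by passage to the limit in the PDE.

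For the initial condition, the Dirichlet heat kernels satisfy $K_n(t,\cdot,y)\to\delta_y$ as $t\to 0$ weakly (standard result on each $\Omega_n$), and a diagonal/monotonicity argument together with the sub-Markovian bound upgrades this to $K(t,\cdot,y)\to\delta_y$ as $t\to 0$. Symmetry $K(t,x,y)=K(t,y,x)$ is inherited from the symmetry of each $K_n$.

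Finally, to see that $K$ is the \emph{smallest} positive fundamental solution, let $\tilde K$ be any other positive $C^\infty$ function on $(0,\infty)\times M\times M$ satisfying the heat equation in $x$ and converging to $\delta_y$ as $t\to 0$. For each $n$, consider $\tilde K-K_n$ on $(0,\infty)\times\Omega_n$: it is nonnegative on the parabolic boundary (positive on $\partial\Omega_n$, vanishing initial difference in the distributional sense), so by the parabolic maximum principle $K_n\le\tilde K$ on $\Omega_n$. Letting $n\to\infty$ gives $K\le\tilde K$, which is the minimality. The main technical obstacle is the initial-condition step on a possibly non-complete, non-compact manifold, where one has to combine the monotone convergence with the local nature of the parabolic maximum principle to pass to the distributional limit $\delta_y$ uniformly enough in $n$; all other steps are standard parabolic PDE on bounded domains together with the maximum principle.
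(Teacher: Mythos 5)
The paper states this proposition without proof: it is a classical recall of Dodziuk's construction of the minimal heat kernel on a general Riemannian manifold, and the paper simply uses it. Your argument is exactly the standard exhaustion proof from the literature --- Dirichlet heat kernels $K_n$ on a compact exhaustion, monotonicity $K_n\le K_m$ by the parabolic maximum principle, passage to the monotone limit, finiteness and smoothness from the sub-Markovian $L^1$-bound $\int_{\Omega_n}K_n\,d\mathrm{vol}\le 1$ together with interior parabolic Harnack/Schauder estimates, and minimality by another comparison argument --- so in structure it is correct and is precisely what the paper is implicitly invoking.

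One point you gloss over, in both the monotonicity step and the minimality step, is the application of the parabolic maximum principle across the initial-time face of the parabolic boundary when the initial data is only prescribed distributionally (``vanishing initial difference in the distributional sense''). The classical maximum principle requires a pointwise sign on the parabolic boundary, and $\delta_y$ is not a function, so the comparison cannot be read off directly at $t=0$. The standard repair is to smear in the $y$-variable: for nonnegative $f\in C_c(\Omega_n)$ set
\begin{equation}
u_f(t,x)\coloneqq\int_{\Omega_n}K_n(t,x,z)f(z)\,d\mathrm{vol}(z),\qquad
\tilde u_f(t,x)\coloneqq\int_{M}\tilde K(t,x,z)f(z)\,d\mathrm{vol}(z),
\end{equation}
which are genuine caloric functions continuous up to $t=0$ with identical classical initial datum $f$; the maximum principle on $[0,T]\times\bar\Omega_n$ then gives $\tilde u_f\ge u_f$, and letting $f$ run over an approximate identity at $y$ yields $\tilde K(t,x,y)\ge K_n(t,x,y)$. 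The same smearing fixes the $K_n\le K_m$ comparison. With that caveat made explicit, your proof is sound and matches the standard reference the paper relies on.
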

To generalize the conclusions of Section \ref{SecNoLocalMinimum}, we need estimates resembling the one in the proof of Lemma \ref{modif estim heat singular measures}, mainly that there is some constant $C > 0$ such that: 
\begin{equation}
\label{eq:heat kernel growth}
    K_{2t} \leq C K_t\, .
\end{equation}
In the Euclidean case $M = \mathbb{R}^d$, we got $C = 2^{d/2}$. We do not detail the conditions for this to hold, but it is true if $M$ is a nilpotent Lie group equipped with a left-invariant metric (\cite{VAROPOULOS19901}), if $(M,g)$ is a geodesically complete non-compact Riemannian manifold of nonnegative Ricci curvature (\cite{grigoryan}), or if $(M,g)$ is compact (\cite{li1986parabolic}).
For example, on the flat 1-dimensional torus $\mathbb{T} \coloneqq \mathcal{S}^1$ which we represent as $[0, 1]/\{ 0 \sim 1 \} = \mathbb{R}/\mathbb{Z}$, the heat kernel is given by the periodization of the Euclidean heat kernel:
\begin{equation*}
    K_\mathbb{T}(t,x,y) := \sum_{n \in \mathbb{Z } }K_\mathbb{R}(t,x + k,y)\,.
\end{equation*}
As the flat d-dimensional torus $\mathbb{T}^d \coloneqq (\mathcal{S}^1)^d$ is a product space, its heat kernel is the product, defined for $x = (x_1, ..., x_d)$ and $y = (y_1, ..., y_d)$ by:
\begin{equation*}
    K_{\mathbb{T}^d}(t,x,y) = \prod_{1 \leq k \leq d} K_\mathbb{T}(t,x_k,y_k)\,,
\end{equation*}
so that the desired estimate in Formula \eqref{eq:heat kernel growth} holds for some $C > 0$.
An example of a manifold where it does not hold is the hyperbolic space.

\newpage
\clearpage
\markboth{}{}
\section*{Acknowledgements}
The authors thank very warmly Stephen C. Preston for his help on radial solutions. They also thank Adrien Vacher for his help at the beginning of this work and Théo Dumont for his careful proofreading.
This work was supported by the Bézout Labex (New Monge Problems), funded by ANR, reference ANR-10-LABX-58.

\printbibliography
\clearpage
\markboth{APPENDIX}{APPENDIX}
\newpage
\appendix
\clearpage
\markboth{APPENDIX}{APPENDIX}

\section{Wasserstein Gradient Flows}

Let us recall the definition of the Wasserstein distance $W_2$ \cite{santambrogio2015optimal,ambrosio2005gradient}.

\begin{definition}
    Let $(X,d)$ be a metric space. We denote $\mathcal{P}_2(X)$ as the set of probability measures on $X$ with bounded second moment, i.e. such that $\int d(x,x_0)^2d\mu(x) < \infty$ for some $x_0 \in X$. The 2-Wasserstein distance $W_2$ between two measures $\mu,\nu \in \mathcal{P}_2(X)$ is defined by:
    \begin{equation}
        W_2^2(\mu,\nu) \coloneqq \underset{\gamma \in \Gamma(\mu,\nu)}{\min} \, \int d(x,y)^2d\gamma(x,y) \, ,
    \end{equation}
    where, if we denote $\pi_i$ as the projection on the i-th coordinate, $\Gamma(\mu,\nu) = \{ \gamma \in  \mathcal{P}_2(X \times X) \, | \, \pi_1 \# \gamma = \mu, \pi_2 \# \gamma = \nu\}$ is the space of transport plans between $\mu$ and $\nu$.
\end{definition}

In this paper, we only consider $X$ to be the Euclidean space or a Riemannian manifold. In these cases, the metric space $(\mathcal{P}_2(X), W_2)$ is a complete geodesic space. The following results are about some properties of continuous curves in $\mathcal{P}_2(X)$  \cite{santambrogio2015optimal,ambrosio2005gradient}. 

\begin{definition}[Continuity equation]\label{ContEq}
    Let $\mu_t$ be a time-indexed family of measures on $\mathbb{R}^d$ and $v_t$ a time-dependent $\mu$-integrable vector field. The curve $\mu_t$ is- said to satisfy the continuity equation associated with $v_t$ if: 
    \begin{equation*}
        \partial_t \mu_t + \nabla \cdot (\mu_t v_t) = 0 \, ,
    \end{equation*}
    in the sense of distributions.
\end{definition}
The continuity equation above means that along the curve $\mu_t$, mass moves along the vector field $v_t$.

\begin{definition}[Absolutely continuous curves]
    A curve of measures $\mu_t$ defined for $t \in [a,b]$ is said to be absolutely continuous if there exists some function $m \in L_2(\mathbb R)$ such that for all $a \leq s \leq t \leq b$:
    \begin{equation*}
        W_2(\mu(s),\mu(t)) \leq \int_s^t m(\tau)d\tau \,.
    \end{equation*}
\end{definition}

\begin{proposition}\label{AbsContVect}
    A curve $\mu_t$ satisfies a continuity equation if and only if it is absolutely continuous.
\end{proposition}

This structure allows some Riemannian-like calculus on the geodesic space $(\mathcal{P}_2(X), W_2)$ as introduced by Otto \cite{OttoPorousMedium}. Indeed, in Definition \ref{ContEq}, the vector field $v_t$ is to be seen as the time derivative of the curve $\mu_t$, taking value in the tangent space at $\mu_t$ for the Wasserstein metric. In some diffusive cases, the time derivative cannot be described through the action of a single vector field at all times but rather, informally, a multivalued vector field taking value in a product space. This is why plans are used in some definitions below (see \cite[12.4]{ambrosio2005gradient} for more details on the geometry of Wasserstein spaces).
Let us introduce subdifferentials in the Wasserstein space \cite[Chap 10]{ambrosio2005gradient}.

\begin{definition}[Extended Fréchet subdifferential]\label{VectSubDiff}
    Consider a probability measure $\mu$ and a functional $\mathcal{F}: \mathcal{P}(\mathbb R^d) \to \mathbb R$. A plan $\gamma \in \mathcal{P}(\mathbb{R}^d \times \mathbb{R}^d)$ belongs to the Fréchet subdifferential $\bm{\partial}\mathcal{F}(\mu)$ of $\mathcal{F}$ at $\mu$  if $\pi_1 \#\gamma = \mu$ and if for every probability measure $\rho$:
    \begin{equation}
    \label{PlanSubdiff}
        \mathcal{F}(\rho) - \mathcal{F}(\mu) \geq \underset{\alpha \in \Gamma(\gamma,\rho)}{\inf} \, \int_{X^3} \langle x_2,x_3-x_1 \rangle d\alpha + o(W_2^2(\mu,\rho)).
    \end{equation}
\end{definition}
In some cases, a transport plan $\gamma \in \bm{\partial}\mathcal{F}(\mu)$ may be concentrated on the graph of a vector field, being of the form:
\begin{equation*}
    \gamma = (Id \times \xi) \# \mu \,,
\end{equation*}
for a vector field $\xi \in L_2(\mu)$. Consequently, the subdifferential $\partial\mathcal{F}(\mu)$ is defined as follows: 

\begin{definition}\label{VecSubDiff}
    Let $\mathcal{F}$ be a functional on $\mathcal{P}_2$ and $\mu \in \mathcal{P}_2$. A vector field $\xi \in L_2(\mu)$ belongs to the subdifferential $\partial\mathcal{F}(\mu)$ of $\mathcal{F}$ at $\mu$ if for every probability measure $\rho$
    \begin{equation*}
        \mathcal{F}(\rho) - \mathcal{F}(\mu) \geq \underset{\gamma_0 \in \Gamma_0(\mu,\rho)}{\inf} \, \int \xi(x) \cdot (y-x) d\gamma_0(x,y) + o(W_2^2(\mu,\rho))\, ,
    \end{equation*}
    where $\Gamma_0(\mu,\rho)$ is the set of optimal transport plans between $\mu$ and $\rho$.
\end{definition}

For a general functional $\mathcal{F}$ and a general probability measure $\mu$, both $\partial\mathcal{F}(\mu)$ and $\bm{\partial}\mathcal{F}(\mu)$ may be empty.

There are several ways to approach gradient flows in the Wasserstein space. The first one is a direct analogy of gradient flows on a manifold using the previous definitions.

\begin{definition}[Pointwise differential formula]\label{PointGrad}
    Let $\mu_t$ be an absolutely continuous curve in $\mathcal{P}_2(\mathbb{R}^d).$ By Proposition \ref{AbsContVect} it is a weak solution of a continuity equation with a time-dependent vector field $v_t$.
    The curve $\mu_t$ is said to be a gradient flow a functional $\mathcal{F}$ if for almost any $t>0$:
    \begin{equation}
    \label{PointGradEq}
        v_t \in -\partial\mathcal{F}(\mu_t).
    \end{equation}
\end{definition}

This definition is quite strong, and such curves may not exist at all. To construct them, the idea is to use a discrete algorithm approximation.
Let $\mathcal{F}$ be a functional defined on $\mathcal{P}$. Let us fix an initial measure $\mu_0$, $\tau >0$ and consider the following discrete recursive scheme, called Minimizing Movement or JKO  steps (\cite{ambrosio2005gradient,jordan1998variational}): 
\begin{equation}
\label{JKOsteps}
    \mu_{k+1}^\tau \in \underset{\rho \in \mathcal{P}_2}{\argmin}\,\mathcal{F}(\tau,\mu_k;\rho) \coloneqq \mathcal{F}(\rho) + \frac{1}{2\tau} W_2^2(\rho,\mu_k^\tau)\,.
\end{equation}
It consists in updating $\mu_{k+1}$ using the proximal function $\mathcal{F}(\tau,\mu_k;\cdot)$.
We assume that the associated sequence $(\mu_k^\tau)_{k\in \mathbb{N}}$ can be constructed and we consider the piecewise constant curves defined, if $t \in [k\tau,(k+1)\tau[$ by 
$
    U_\tau(t) = \mu_k^\tau
$.

\begin{definition}[Minimizing movement curve]
    Let $\mu_0$ an initial probability measure and $\mathcal{F}$ a functional such that the associated sequence $(\mu_k^\tau)_{k\in \mathbb{N}}$ can be built.
    A curve $\mu_t$ is said to be a minimizing movement curve if there exists some sequence $(\tau_k) \searrow 0$ such that $(U_{\tau_k})$ converges narrowly to $\mu_t$. 
\end{definition}
There is no a priori guarantee of uniqueness in the definition above: to a functional $\mathcal{F}$ and an initial probability measure $\mu_0$ can correspond an infinity of minimizing movements. 

Let us make the following assumptions on $\mathcal{F}$:
\begin{enumerate}
    \item $\mathcal{F}$ is proper (not everywhere $+\infty$) and lower semi continuous for the weak topology.
    \item Coercivity: there exists some $\tau_0>0$ such that for all $\tau_0>\tau >0$ and $\mu \in \mathcal{P}_2$, there exists some probability measure $\mu_\tau$ minimizing the proximal function $\mathcal{F}(\tau,\mu;\cdot)$.
\end{enumerate}
Using \cite[Proposition 11.1.6]{ambrosio2005gradient}, we get:

\begin{theorem}\label{ExistMMCurve}
    With the above assumptions, a Minimizing Movement curve always exists.
\end{theorem}
Why are we interested in such curves? The iterations of the discrete scheme \eqref{JKOsteps} satisfy an important regularity property, they are point of subdifferentiability of $\mathcal{F}$ \cite[Th 10.3.4; Remark 10.3.5]{ambrosio2005gradient}.
\begin{proposition}\label{DiffJKO}
    Let $\mu$ be a probability measure and $\mathcal{F}$ a functional such that we can define $\mu_\tau$ as an iteration of \eqref{JKOsteps} from $\mu$.
    Then $\partial\mathcal{F}(\mu_\tau)$ is not empty.
\end{proposition}
Let $\hat{\gamma}_\tau \in \Gamma_0(\mu_\tau,\mu)$. The rescaled velocity plan $\gamma_\tau \coloneqq f_\tau \# \hat{\gamma}_\tau$, where $f_\tau(x_1,x_2) = (x_1,\frac{x_2-x_1}{\tau})$, is in the extended Fréchet subdifferential $\bm{\partial}\mathcal{F}(\mu)$. Moreover, there exists a unique optimal plan such that its barycenter projection $\Tilde{\gamma}_\tau^0$ is in the subdifferential $\partial \mathcal{F}(\mu)$. It is characterized by the strictly convex minimum condition:
\begin{equation*}
    \|\Tilde{\gamma}_\tau^0\|_{L_2(\mu_\tau)} = \underset{\hat{\gamma}_\tau \in \Gamma_0(\mu_\tau,\mu)}{\min}\,\|\Tilde{\gamma}_\tau - Id\|_{L_2(\mu_\tau)}\,.
\end{equation*}
\begin{remark}
    The differentiation point is $\mu_\tau$, not directly $\mu$. This result is to be compared with the fact that in the Euclidean space, the iteration of the implicit gradient descent scheme $x_{k+1} = x_k - \tau \nabla f(x_{k+1})$ can be obtained as: 
\begin{equation}\label{JKOSteps}
    x_{k+1} \in \argmin_y \, f(y) + \frac{1}{2\tau}\|y-x_k\|^2 \,.
\end{equation}
\end{remark}
Due to Proposition \ref{DiffJKO}, we would like to pass to the limit as $\tau \rightarrow 0$ and conclude that a Minimizing Movement curve is a gradient flow in the sense of Definition \ref{PointGrad}. However, this is not always the case and such a curve only satisfies a relaxed gradient equation, with the time-dependent vector field $v_t$ only belonging to the limiting subdifferential of $\mathcal{F}$ at $\mu_t$ \cite[Def 11.1.5]{ambrosio2005gradient}.
In the case of a functional $\mathcal{F}$ which is $\lambda$-convex along generalized geodesics \cite[Def 9.2.2, 9.2.4; Th 11.2.1]{ambrosio2005gradient}, more can be said.
\begin{theorem}[Gradient flow for $\lambda$-convex functionals]\label{GradFlowConv}
    Let $\mathcal{F}$ be a $\lambda$-convex functional along generalized geodesics and $\mu_0 \in \mathcal{P}_2$.
    Then:
    \begin{itemize}
        \item There exists a unique Minimizing Movement curve starting from $\mu_0$.
        \item This limiting curve $\mu_t$ is a solution of the gradient flow equation \eqref{PointGradEq}.
        \item (EVI) The curve $\mu_t$ satisfies the EVI inequality, for all $\nu \in \mathcal{P}_2$
        \begin{equation}\label{EVI_Equation}
            \frac{1}{2}\frac{d}{dt} W_2^2(\mu_t,\nu) \leq \mathcal{F}(\nu) - \mathcal{F}(\mu_t) - \frac{\lambda}{2}W_2^2(\mu_t,\nu)\,.
        \end{equation}
        \item If $\lambda > 0$, $\mathcal{F}$ admits a unique minimum $\mu_*$ and both $\mu_t$ and $\mathcal{F}(\mu_t)$ converge exponentially respectively to $\mu_*$ and $\mathcal{F}(\mu_*)$.
        \item If $\lambda = 0$ and $\mathcal{F}$ admits a minimum $\mathcal{F}_*$ then
        \begin{equation*}
            \mathcal{F}(\mu_t) - \mathcal{F}_* \leq \frac{W_2(\mu_0,\mu_*)}{2t} \, .
        \end{equation*}
    \end{itemize}
\end{theorem}

\begin{example}

    \begin{itemize}
        \item If $\mathcal{F}$ is defined on measures with density with respect to the Lebesgue measure ($\mu = \rho(x)dx$) by the formula $\mathcal{F}(\mu = \rho(x)dx) \coloneqq \int F(\rho)dx$ where $F$ is convex, superlinear, satisfies $F(0)=0$, and that the map $s \mapsto s^dF(s^{-d})$ is convex and non increasing, then $\mathcal{F}$ is convex along generalized geodesics \cite[Th 7.28]{santambrogio2015optimal}.
        For example, this condition is verified for $H : x \mapsto x \log(x)$ which gives the Boltzmann entropy $\mathcal{H}$, and $F_p : x \mapsto x^p$ where $p>1$, defining p-energies.
        \item If $V : \mathbb{R}^d \rightarrow \mathbb{R}$ is $\lambda$-convex, the associated potential functional $\mathcal{V} : \mu \mapsto \int V d\mu$ is $\lambda$-convex along geodesics.
        \item If $W : \mathbb{R}^d \times \mathbb{R}^d \rightarrow \mathbb{R}$ is $\lambda$-convex, the associated auto-interaction functional $\mathcal{W} : \mu \mapsto \int W d\mu^{\otimes 2}$ is $\lambda$-convex along geodesics.
        \item The previous condition is not necessary. In dimension 1, let us define $\Delta_+ \coloneqq \{(x,y), x\leq y \}$ and $\Delta_- \coloneqq \{(x,y), x\geq y \}$. Then if $W$ is convex when restricted to $\Delta_+$ and $\Delta_-$, the associated energy $\mathcal{W}$ is convex along generalized geodesics. That is the case for the Energy Distance kernel $(x,y) \mapsto -\|x-y\|$.
    \end{itemize}

\end{example}

\begin{corollary}[Convergence of the Energy Distance gradient flow in 1D]
    Let $\mu_0, \nu \in \mathcal{P}_2(\mathbb{R})$ with finite first moment.
    Using the previous results, there exists a unique solution $\mu_t$ to the associated gradient flow equation \eqref{PointGradEq} for the functional $E_\nu$ with the energy distance kernel, obtained as a Minimizing Movement curve from $\mu_0$. Furthermore:
    \begin{equation*}
        E_\nu(\mu_t) \leq \frac{W_2(\mu_0,\nu)}{2t} \, .
    \end{equation*}
\end{corollary}

\begin{proof}
    Let $x,y \in \mathbb{R}$.
    If $(x,y) \in \Delta_+$, then $-\|x-y\| = x-y$ and the Energy distance kernel is convex on $\Delta_+$.
    If $(x,y) \in \Delta_-$, then $-\|x-y\| = y-x$ and the Energy distance kernel is convex on $\Delta_-$.
    This proves that $E_\nu(\cdot)$ is convex along Wasserstein geodesics.
    The result directly follows from Theorem \ref{GradFlowConv}.
\end{proof}
However, it is important to note that in higher dimensions, the energy is not convex along generalized geodesics.

\section{Kernels, MMD and Potential Theory}

In this paper, we are interested in particular functionals on probability measure spaces: MMD-energies.

\begin{definition}
    Let $G$ be a conditionally positive kernel (possibly taking infinite values). 
    \begin{itemize}
        \item The associated internal $G$-energy functional of a signed measure $\rho$ is defined by:
    \begin{equation*}
        E(\rho) \coloneqq \frac{1}{2} \langle \rho, G \star \rho \rangle.
    \end{equation*}
        \item If $\mu$ and $\nu$ are probability measures of finite internal energy, we define the functional: 
        \begin{equation*}
        E_\nu(\mu) \coloneqq E(\mu - \nu).
    \end{equation*}
    The Maximum Mean Discrepancy of kernel $G$ between $\mu$ and $\nu$ is defined as: 
    \begin{equation*}
        MMD(\mu,\nu) \coloneqq \sqrt{E(\mu-\nu)}.
    \end{equation*}

    \end{itemize}
    
\end{definition}

Maximum Mean Discrepancies have been studied in the context of Wasserstein gradient flows, but mainly in smooth cases \cite{arbel2019maximum}. 
\subsection{In the Euclidean Space}
A particular class of singular kernels o Riesz kernels.

\begin{definition}[Riesz kernels]
    In the Euclidean space $\mathbb {R}^d$, $d \in \mathbb N$, Riesz kernels are defined by:
\begin{equation*}
    k_s(x,y) = \frac{1}{s\|x - y\|^s}\,,
\end{equation*}    
for $s \in [-1,d-2]\setminus \{0\}$. If $s=0$, the kernel is defined as
$
    k_0(x,y) = -\log({\|x - y\|})\,.
$
\end{definition}
In this article, we  focus on two kernels: 

\noindent
 \textbf{The Coulomb kernel. } In $\mathbb{R}^d$, the Coulomb Kernel is given by 
    $
        k_{d-2}(x,y) = \frac{1}{(d-2)\|x - y\|^{d-2}}
    $.
    It is remarkable as a fundamental solution of the Laplace equation in $\mathbb{R}^d$ in the following sense.
    \begin{proposition}[Corresponding differential operator]\label{InvLaplace}
    Let $G$ be the Coulomb Kernel in $\mathbb{R}^d$. There exists a positive constant $c_d$ such that $G$ is the fundamental solution of $\frac{1}{c_d}(-\Delta)^{\frac{d-s}{2}}$ in $\mathbb{R}^d$, which is, if $\Delta_x G(x,y)$ is equal to the Laplacian of $x \mapsto G(x,y)$:
    \begin{equation*}
        \frac{1}{c_d}(-\Delta_x) G(x,y) = \delta_y\,.
    \end{equation*}
\end{proposition}

\begin{definition}\label{Potential}
    For a kernel $G$ and a measure $\rho$, we define the associated potential by
$
        \varphi_\rho^{G} \coloneqq G \star \mu
$
    when it is well-defined.
\end{definition}

When there is no ambiguity on the kernel $G$ or the measure $\rho$ we will only write $\varphi$ instead of $\varphi_{\rho}^G$.
In the Coulomb case, this potential corresponds to the electric field generated by a distribution $\mu$ of electric charges.

\begin{proposition}[Coulomb kernel and Laplace equation]
    Let $G$ be the Coulomb kernel in $\mathbb{R}^d$ and $\mu$ be a positive measure.
    Then, the associated potential $\varphi_\mu$ satisfies the equation:
    \begin{equation*}
        \frac{1}{c_d}(-\Delta)\varphi_\mu = \mu\,.
    \end{equation*}
\end{proposition}

This equation establishes that, for the Coulomb kernel, the potential $\varphi_\mu$ is harmonic outside of the support of $\mu$, and superharmonic in $\mathbb{R}^d$ \cite{papadimitrakis}.

\noindent
\textbf{The Energy Distance kernel. } The Energy Kernel is independent of the dimension and defined by
    $
        k_{-1}(x,y) = -\|x-y\|
    $.
    It has the good property that any measure with a finite first moment has finite internal energy. This is to compare with the Coulomb kernel, where additional regularity is required, e.g. finite $H_{-1}$ norm.
\begin{proposition}[Convexity on probability measures]
    Let G be the Coulomb or Energy Distance kernel and $\nu \in \mathcal{P}(\mathbb R^d)$ with finite $G$-energy. Then the associated functional $E_\nu(\cdot) = E(\cdot - \nu) $
    is a quadratic functional (in $\mu$) that is strictly convex (for the convex structure on $\mathcal{P}(\mathbb R^d)$) on its domain. It is positive and equal to $0$ if and only if $\mu = \nu$. Moreover, the application $(\mu,\nu) \mapsto MMD(\mu,\nu) \coloneqq \sqrt{E(\mu-\nu)}$ defines a distance on probability measures with finite $G$-energy
\end{proposition}

\subsection{Extension to Riemannian Manifolds}

We focus on compact manifolds without boundaries to avoid possible loss of mass at infinity in the dynamics.
Coulomb-like interactions among probability measures can be defined through the fundamental solution of the Laplace equation on some Riemannian manifolds.
We use the formalism developed in \cite{garcia2019} and \cite{STEINERBERGER2021109076}, along with results from \cite{aubun1998}. 

Let $(M,g)$ be a compact oriented n-dimensional Riemannian manifold without boundary where $g$ is the Riemannian metric. We denote $\pi$ its volume form, assuming $\pi(M) = 1$. Additionally, $\Delta_M : \mathcal{C}^{\infty}(M) \rightarrow {C}^{\infty}(M)$ is the associated Laplace-Beltrami operator. 

\begin{definition}[Green's function on a manifold]
    A kernel $G : M \times M \rightarrow ]-\infty, + \infty ]$ is said to be a Green function if it is symmetric, if $G_x : y \in M \mapsto G(x,y)$ is integrable for all $x \in M$, and if it satisfies the Laplacian equation: 
    \begin{equation}
    \label{lap-bel eq}
        -\Delta_M G_x = -\delta_x + 1\,,
    \end{equation}
    in a distributional sense.
\end{definition}
The following existence and quasi-uniqueness result is given in \cite[Chapter 4]{aubun1998}.
\begin{proposition}
    Let $(M,g)$ as defined previously. Then, equation \eqref{lap-bel eq} admits a unique solution, up to an additive constant.
\end{proposition}
In particular, if $\mu$ is a measure on $M$, we can define the potential 
$
    \varphi_\mu(\mu)(x) \coloneqq \int G(x,y)d\mu(y)
$.
Then, in the sense of distributions 
$
    \Delta_M \varphi_\mu = -\mu + \mu(M)\pi
$.
In particular, $G$ is bounded from below and $\int G(x,y)d\pi(y)$ does not depend on $x$.
We denote by $G$ the unique Green function such that $\int G(x,y)d\pi(y) = 0$. The Green's function $G$ is lower semi-continuous, so that the functional defined by: 
\begin{equation}
\label{coulomb energy mani}
    \mathcal{W}(\mu) = \iint_{M\times M} G(x,y)d\mu(x)d\mu(y)
\end{equation}
is lower semi-continuous for the weak topology.
Furthermore, the kernel $G$ is $\mathcal{C}^\infty$ outside of the diagonal $\mathcal{D} \coloneqq \{(x,x) \, | \, x \in M \}$.
Now, let $\nu$ be a density probability measure on $M$. The energy functional is defined similarly:
\begin{equation*}
    E_\nu(\mu) \coloneqq \frac{1}{2}\iint_{M\times M} G(x,y)d(\mu-\nu)^{\otimes 2}(x,y)\,.
\end{equation*}
This energy is lower semi-continuous, positive, and equal to $0$ if and only if $\mu = \nu$. As in the Euclidean case, the square root is a distance between $\mu$ and $\nu$ \cite{Cartan1945,garcia2019}.

\section{Some Proofs}

\begin{proof}[Proof of Lemma \ref{MinimNormSubdiff}]\label{proof lemma 1}
    Let us fix a vector field $v \in \mathcal{C}_c^{\infty}$, and prove the following equality in both cases: 
    \begin{equation}
    \label{lim deriv direc}
        \underset{t \rightarrow 0}{\lim }\, \frac{E_\nu((id + tv)\#\mu) - E_\nu(\mu)}{t} = \int (\nabla G) \star (\mu - \nu) \cdot v d\mu\,.
    \end{equation}
\noindent
\textbf{First case: } If $G = -\|.\|$.
First let us remark that if $x = y, G(x-y + t(v(x) - v(y))) = G(x-y) = 0$. Then:

\begin{align*}
\MoveEqLeft[1]
    \underset{t \rightarrow 0}{\lim }\, \frac{1}{2} \int  \frac{G(x-y + t(v(x) - v(y))) - G(x-y)}{t} d\mu(x) d\mu(y) \\
    &= \underset{t \rightarrow 0}{\lim } \,\frac{1}{2} \int_{x \neq y}  \frac{G(x-y + t(v(x) - v(y))) - G(x-y)}{t} d\mu(x) d\mu(y) \\
    &= \frac{1}{2} \int_{x \neq y}  \nabla G (x-y) \cdot (v(x)-v(y)) d\mu(x) d\mu(y) \\
    &= \int_{x \neq y}  \nabla G (x-y) \cdot v(x) d\mu(x) d\mu(y) \,. \\
\end{align*}
Same for the other term of the MMD energy: 
\begin{align*}
\MoveEqLeft[1]
    \underset{t \rightarrow 0}{\lim }\, \frac{1}{2} \int  \frac{G(x-y + tv(x) ) - G(x-y)}{t} d\mu(x) d\nu(y) \\
    &= \underset{t \rightarrow 0}{\lim }\,  \int_{x \neq y}  \frac{G(x-y + t(v(x) - v(y))) - G(x-y)}{t} d\mu(x) d\mu(y) \\
    &= \int_{x \neq y}  \nabla G (x-y) \cdot v(x) d\mu(x) d\nu(y) \\
     &= \int_{x \neq y}  \nabla G (x-y) \cdot v(x) d\mu(x) d\nu(y) \,.
\end{align*}
We conclude that Formula \eqref{lim deriv direc} holds for the Energy Distance kernel.

\vspace{5mm}

\noindent
\textbf{Second case: } If $G = \frac{1}{\|.\|^{d-2}}$.
Here, we need to use the result that if a positive measure has finite Coulomb energy, then it cannot be too singular, i.e. if we define the diagonal $\mathcal{D} \coloneqq \{x=y\} \subset X \times X $ then $\mu^{ \otimes 2}( \mathcal{D} )= 0$.
Indeed, $\int_\mathcal{D} G d\mu^{ \otimes 2} \leq \int Gd\mu^{ \otimes 2} < \infty$, and $G = +\infty$ on $\mathcal{D}$, which proves $\mu^{ \otimes 2}(\mathcal{D})=0$.
This means that for any measure $\mu$ with finite energy: 
\begin{equation*}
    \int G d \mu^{ \otimes 2} = \int_{\mathcal{D}^c} G d \mu^{ \otimes 2} \,.
\end{equation*}

\noindent
Thus we can conclude that Formula \eqref{lim deriv direc} holds for the Coulomb kernel with similar computations, as $G$ is $\mathcal{C}^\infty$ on $\mathcal{D}^c$.
We write: 
$$\frac{E_\nu((id + t v)\#\mu) - E_\nu(\mu)}{t} = \frac{E_\nu((id + tv)\#\mu) - E_\nu(\mu)}{W_2((id+tv)\#\mu,\mu)} \frac{W_2((id+tv)\#\mu,\mu)}{t}\,.$$

\noindent
By definition of the slope, we get: 
\begin{equation*}
    \underset{t \rightarrow 0}{\limsup }\, \frac{E_\nu((id + tv)\#\mu) - E_\nu(\mu)}{W_2((id+tv)\#\mu,\mu)} \leq |\partial E_\nu | (\mu) \,.
\end{equation*}
Additionally,
\begin{equation*}
        \underset{t \rightarrow 0}{\limsup }\, \frac{W_2((id+tv)\#\mu,\mu)}{t}  \leq \|v \|_{L_2(\mu)}\,,
\end{equation*}
so that, taking the limit for $t\rightarrow 0$ we get:
\begin{equation*}
    \int_{x \neq y} (\nabla G) \star (\mu - \nu) \cdot v d\mu \leq |\partial E_\nu | (\mu) \|v \|_{L_2(\mu)} \, .
\end{equation*}
\noindent
Similarly, using $-v$ instead of $v$, we have:
\begin{equation*}
    \left|\int_{x \neq y} (\nabla G) \star (\mu - \nu) \cdot v d\mu \right| \leq |\partial E_\nu | (\mu) \|v \|_{L_2(\mu)}\,.
\end{equation*}
As the vector field $v$ chosen is arbitrary, we get: 
\begin{equation*}
    \|(\nabla G) \star (\mu - \nu) \|_{L_2(\mu)} \leq |\partial E_\nu | (\mu) \, ,
\end{equation*}
which proves the result.
\end{proof}

\begin{proof}[Proof of Lemma \ref{DiffRegMeas}]
    Let $\rho$ be a measure, and $id + v$ an optimal transport between $\mu$ and $\nu$ (so that $\rho = (id + v)\#\mu$) which exists as $\mu \in \mathcal{P}_2^r$. We note $\xi := \nabla G \star(\mu - \nu)$, which is a well-defined vector field everywhere.
    Using the same computations as in the proof of equality \eqref{lim deriv direc}, we get:
    \begin{equation*}
        E_\nu(\rho) - E_\nu(\mu) = \int \xi \cdot v d\mu + o(\|v\|_{L_2(\mu)}^2).
    \end{equation*}
    However, as $v$ is an arbitrary optimal transport plan between $\mu$ and $\rho$ and $\mu$ is regular, we get:
    \begin{equation*}
        E_\nu(\rho) - E_\nu(\mu) \geq \underset{\gamma_0 \in \Gamma_0(\mu,\rho)}{\inf} \, \int \xi \cdot (y-x) d\gamma_0 + o(W_2^2(\mu,\rho)) \,,
    \end{equation*}
    which concludes the proof by Definition \ref{VecSubDiff}.
\end{proof}

\end{document}